\newtheorem*{rep@theorem}{\rep@title}
\newcommand{\newreptheorem}[2]{%
\newenvironment{rep#1}[1]{%
 \def\rep@title{#2 \ref{##1}}%
 \begin{rep@theorem}}%
 {\end{rep@theorem}}}
\newcommand{\maxroot}[1]{\mathrm{maxroot} \left\{ #1\right\}}
\newtheorem{thm}{Theorem}[section]
\newtheorem{cor}[thm]{Corollary}
\newtheorem{lemma}[thm]{Lemma}
\newtheorem{prop}[thm]{Proposition}
\newtheorem{claim}[thm]{Claim}
\theoremstyle{definition}
\newtheorem{defn}[thm]{Definition}
\newtheorem{rmk}[thm]{Remark}
\newcommand{\Strans}[1]{\mathbb{S}#1}
\newcommand{\mysum}[2]{\boxplus_{#1}^{#2}}
\newcommand{\expect}[2]{\mathbb{E}_{#1} \left\{ {#2} \right\}}
\newcommand{\ii}{\mathbb{I}}
\newcommand{\deriv}[1]{\partial_{#1}}
\newcommand{\cauchy}[1]{\mathcal{G}_{#1}}
\newcommand{\Hcauchy}[1]{\mathcal{H}^{\lambda}_{#1}}
\newcommand{\Jcauchy}[1]{\mathcal{J}^{\lambda}_{#1}}
\newcommand{\Rtransform}[1]{\mathcal{R}_{#1}^{\lambda}}
\newcommand{\AND}{\quad\text{and}\quad}
\newcommand{\mydet}[1]{\det\left[#1\right]}
\newcommand{\mymatrix}[1]{
%\arrayspacing{1.5}
%\left[
%\begin{array}{cc}
\begin{bmatrix}
y \ii_m & {#1} \\
{#1}^T & x \ii_d
\end{bmatrix}
%\end{array}
%\right]
%\arrayspacing{1}
}
\newcommand{\recsum}{\boxplus_{d,\lambda}}
\newcommand{\recsumfree}{\boxplus_\lambda}
\newcommand{\dd}{d} % The parameter of the convolution
\newcommand{\mm}{m} % The variable for transforms
\title{A theory of singular values for finite free probability}
\date{\today}
\author{Aurelien Gribinski\\ 
axgribinski@gmail.com\\
Princeton University
}
\begin{document}

\maketitle

\begin{abstract}
We introduce a finite version of free probability for rectangular matrices that amounts to operations on singular values of polynomials. This study is motivated by the companion papers \cite {MG1} and \cite{MG2}, as well as the corresponding paper dealing with the square case \cite{M}. In the process we exhibit a canonic bivariate operation on polynomials, seemingly more natural when singular values are concerned. We show that we can replicate the transforms from free probability, and that asymptotically there is convergence from rectangular finite free probability to rectangular free probability. Lastly, we show that classical distribution results such as a law of large numbers or a central limit theorem can be made explicit in this new framework where random variables are replaced by polynomials. \footnote{partially supported by ANR JCJC GALOP (ANR-17-CE40-0009)}
\end{abstract}

\section{Introduction}\label{sec:intro}
Free probability is a recent field created by Dan Voiculescu (see \cite{Speicherintro} or \cite{RS} and
references therein) that  studies operators on infinite dimensional spaces through the lens of moment distributions  and  convolutions - both from the combinatorial and analytic points of view. It led to many asymptotic results in random matrix theory. On the other hand, finite free probability was introduced in a series of papers by Marcus, Spielman and Srivastava (in particular \cite{MSS4}, \cite{MSS}) a few years ago, and proved some surprising analogues of results in free probability but for square matrices (operators on finite dimensional spaces) and their associated characteristic polynomials (analogues of moment distributions). They used their results to prove the existence of new large families of Ramanujan graphs. The theory was further developed some time after by Marcus  (\cite{M}). Basically, equalities in the realm of free probability turn experimentally into inequalities in the realm of finite free probability. This transition, however, is not well understood yet. The goal of this paper is to extend the theory of finite free probability in a systematic way from eigenvalues to singular values, or said otherwise from square hermitian matrices to rectangular matrices.

 \subsection{An introduction to rectangular free probability}

  In classical probability, if we are given two random variables $X_1$ and $X_2$ in two probability spaces $(M_1,\mu_1)$ and $(M_2,\mu_2)$, then one way to investigate the joint distribution  $\mu_{X_1,X_2}$ of $X_1$ and $X_2$  is to have access to the expectation  $\mathbb{E}[p(X_1,X_2)]$ for all bivariate polynomials $p$. In particular, one can calculate $\mu_{X_1,X_2}$ knowing only  $\mu_1$ and  $\mu_2$ if $X_1$ and $X_2$ are independent, in which case we have $\mu_{X_1,X_2}= \mu_1 \otimes \mu_2$. Another way to see it is that we know all product moments by the knowledge of the moments of $X_1$ and $X_2$ separately. \\
 In a similar spirit, we can extend these notions to noncommutative probability spaces. A noncommutative probability space $(\mathcal{A},\phi)$ is a unital algebra $\mathcal{A}$ over $\mathbb{C}$ and a unital linear functional (trace or expectation) $\phi: \mathcal{A} \rightarrow \mathbb{C}$  with $\phi(1)=1$. Elements of $\mathcal{A} $ are called noncommutative random variables. All the definitions and theorems as below can be found in \cite{RS}.
 \begin{defn}[distribution of a random variable] If there exists a probability distribution $\mu_a$ on $\mathbb{R}$ (we restrict to hermitian operators in the following) such that $\phi(a^k)= \int_{\mathbb{R}} z^k d\mu_a(z)$ for all $k \in \mathbb{N}$, then we call $\mu_a$ the (spectral) distribution of the noncommutative random variable $a$. In this situation, we call $a$ a spectral operator. 
 \end{defn}
 \begin{rmk}
 One can wonder if it is compatible with the spectrum of usual matrices when the expectation function is a trace. Consider the noncommutative unital algebra $M_d(\mathbb{C})$ of complex square matrices of size $d$,  on which there is  the normalized trace defined by $\phi(a)= \frac{1}{d}\sum \lambda_i(a)$, where the $\lambda_i$ are the eigenvalues of the matrix $a$.Then $\phi(a^k)= \frac{1}{d}\sum \lambda_i^k(a)$ and $\mu_a= \frac{1}{d}\sum \delta_{\lambda_i(a)}$, which  is the average of delta masses. The measure $\mu_a$  is indeed the eigenvalue distribution of $a$.\end{rmk}
 \begin{defn} [joint distribution for hermitian spaces]
 The joint distribution of spectral operators $a$ and $b$  consists of all crossed moments:$ \big\{ \phi(a_{1}a_{2} \dotsm a_{n}) | n \geq 1, a_{i} \in \{a,b\} \big\}$. 
 
 \end{defn}
 \begin{defn} [freeness in spectral case]
 We say that spectral operators $a$ and $b$ are free if for all $n \in \mathbb{N}$ and all univariate polynomials $p_1,p_2, \dots p_{2n}$ the following holds
 \[
 \phi[p_1(a)p_2(b)\dotsm p_{2n-1}(a)p_{2n}(b)]=0
 \]
 whenever $\phi[p_{2i-1}(a)]=\phi[p_{2i}(b)]=0 $  for  $ i \leq n$.
 These equalities give a way of computing recursively all crossed moments in terms of the moments of $a$ and $b$. 
 \end{defn}
 \begin{rmk}
The notion of freeness
gives calculation rules of mixed moments of random variables like the notion of classical
independence in probability theory.  Independence of random variables $X$ and $Y$  in the case where $\phi$ is the expectation is equivalent to
 \begin{align*}
 \phi[p_1(X)p_2(Y)]=0  &  \text{ whenever }  \phi[p_1(X)]= \phi[p_2(Y)]=0.
 \end{align*}
 Indeed, we have to notice that
 \[ \phi[(p_1(X)-\phi(p_1(X)))(p_2(Y)-\phi(p_2(Y)))]= \phi[p_1(X)p_2(Y)] -\phi[p_1(X)]\phi[p_2(Y)].
 \]
 The main difference between free independence and classical independence is that  free independence respects the noncommutativity of the variables. 
 \end{rmk}
 The most interesting  use of free independence for us is the analysis of the asymptotics of large random matrices. 
 \begin{defn}[Orthogonal invariance]
 Consider some hermitian random matrices $A_d$ (for $d \in \mathbb{N}$). We say that they form an orthogonally invariant random ensemble if they are invariant with respect to conjugation with a Haar orthogonal random matrix: that is the entries of $A_d$ and $Q_d^TA_dQ_d$ have the same joint distribution for a random $Q_d$ in the orthogonal group of size $d$ (for all $d \in \mathbb{N}$) . Conjugation by a Haar orthogonal random matrix corresponds to a random rotation of the eigenvectors. 
 \end{defn}
 The following theorem states  that independence of the eigenvectors (obtained by rotating randomly)  in the classical sense leads to the asymptotic freeness of the spectral distributions. 
 \begin{thm} [Voiculescu] 
Let $A_d$and $B_d$ be $d \times d$ real independent hermitian
random orthogonally invariant matrix ensembles whose empirical eigenvalue distributions of $A_d$ and
$B_d$ converge in probability to distributions of some spectral operators $a$ and $b$, respectively. Then $a$
and $b$ are freely independent.  \end{thm}
  
 \begin{defn}[free convolution]
 For freely independent spectral operators $a$ and $b$ with distributions $\mu_a$ and $\mu_b$ on $\mathbb{R}$ we denote by $ \mu_a \boxplus \mu_b $ the distribution of $a+b$. 
 \end{defn}
 \begin{rmk}\label{Voiculescu}
  If we take any independent random matrices $A_d$ and $B_d$ whose empirical eigenvalue distributions converge in probability to distributions $\mu_a$ and $\mu_b$ without any assumption on orthogonal invariance, then  $Q_d^TA_dQ_d$ and $\tilde{Q_d}^TB_d\tilde{Q_d}$ are orthogonally invariant (for Haar
distributed random orthogonal matrices $Q_d$ and $\tilde{Q_d}$) and we get that they are asymptotically free. As the empirical eigenvalue distributions of $Q_d^TA_dQ_d + \tilde{Q_d}^TB_d\tilde{Q_d}$ is the same as  $A_d + Q_d^TB_dQ_d$ (conjugation doesn't affect the eigenvalue distribution), we can conclude that the empirical eigenvalue distributions of $A_d + Q_d^TB_dQ_d$  is converging to $ \mu_a \boxplus \mu_b $. 
  \end{rmk}
 In order to compute the free convolution, we associate power series to distributions that behave well with respect to this operation.
 \begin{defn}[Cauchy and $R$-transforms] Define the Cauchy transform of a Borel measure $\mu$ on $\mathbb{R}$ as 
 \[
 \mathcal{G}_{\mu}(x):= \int_{t \in \mathbb{R}} \frac{d\mu(t)}{x-t}, \  \   \text{         for   Im}(x)>0.
 \]
  The $R$-transform is in turn defined as
 \[
 \mathcal{R}_{\mu}(x):= \cauchy{\mu}^{-1}(x) - \frac{1}{x}= \cauchy{\mu}^{-1}(x) -  \cauchy{\mu_0}^{-1}(x),
 \]
where $\mu_0$ is the Dirac mass at zero. By inverse we mean compositional inverse for power series (around $x=\infty$, that is $\cauchy{\mu}(x)$ is a power series in $\frac{1}{x}$). 
 \end{defn}
 \begin{rmk}
 If the measure $\mu$ is compactly supported, which is the case in this paper, then we can also define  $\mathcal{G}_{\mu}(x)$ for  real $x > \max\textup{Supp}\ \mu$, where $\textup{Supp}\ \mu$ is the support of $\mu$.
 
 \end{rmk}
The $R$-transform is useful because it fully characterizes the distribution and  linearizes the convolution:
\begin{thm} [linearization, see \cite{RS}] \label{linearization} For $x$ small enough (in the domain of convergence),
\[
\mathcal{R}_{\mu_a \boxplus \mu_b}(x)= \mathcal{R}_{\mu_a}(x)+ \mathcal{R}_{\mu_b}(x).
\]
\end{thm}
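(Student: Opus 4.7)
The plan is to prove this via the combinatorial theory of free cumulants, which is the cleanest route even though the definition of the $R$-transform in the excerpt is analytic. The idea is to show that the $R$-transform is the generating function of a sequence of numbers (free cumulants) that behave additively for free variables because ``mixed'' cumulants vanish.

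First I would introduce free cumulants $\kappa_n(a_1, \ldots, a_n)$, defined implicitly via M\"obius inversion on the lattice $NC(n)$ of non-crossing partitions:
\[
\phi(a_1 a_2 \cdots a_n) \;=\; \sum_{\pi \in NC(n)} \kappa_{\pi}(a_1, \ldots, a_n),
\]
where $\kappa_\pi$ factors as a product of $\kappa$'s over the blocks of $\pi$. Writing $M_\mu(x) = \sum_{n\geq 0} \phi(a^n) x^n$ and $C_\mu(x) = \sum_{n\geq 1} \kappa_n(a) x^n$, a generating-function manipulation on non-crossing partitions (splitting off the block containing $1$) yields the functional equation $M_\mu(x) = 1 + C_\mu(xM_\mu(x))\cdot M_\mu(x)$. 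Rewriting in terms of $\cauchy{\mu}(x) = M_\mu(1/x)/x$ and setting $K_\mu(x) = \cauchy{\mu}^{-1}(x) - 1/x$, this relation is precisely $K_\mu(x) = C_\mu(x)/x$, so $\mathcal{R}_\mu(x) = K_\mu(x) = \sum_{n\geq 1}\kappa_n(a) x^{n-1}$; the analytic $R$-transform coincides with the cumulant generating function.

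The main step, and the principal obstacle, is to establish the combinatorial characterization of freeness: $a$ and $b$ are free if and only if every mixed free cumulant vanishes, i.e.
\[
\kappa_n(c_1, \ldots, c_n) = 0 \quad \text{whenever } c_i \in \{a,b\} \text{ are not all equal.}
\]
This is proved by induction on $n$. One uses the definition of freeness (vanishing of $\phi$ on alternating products of centered elements) together with the moment-cumulant inversion to show that each mixed cumulant can be expressed in terms of lower-order mixed cumulants plus a term that vanishes by the freeness condition. The delicate point is that the sum over $NC(n)$ must be reorganized so that the non-crossing constraint produces exactly the alternating-product structure required by the definition of freeness; this is essentially the content of Speicher's theorem and is where the non-crossing combinatorics plays its decisive role.

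Once mixed cumulants vanish, the conclusion is immediate. By multilinearity of $\kappa_n$,
\[
\kappa_n(a+b, \ldots, a+b) = \sum_{c_i \in \{a,b\}} \kappa_n(c_1, \ldots, c_n) = \kappa_n(a,\ldots,a) + \kappa_n(b,\ldots,b),
\]
all mixed terms being zero. Summing into generating functions gives $\mathcal{R}_{\mu_a \boxplus \mu_b}(x) = \mathcal{R}_{\mu_a}(x) + \mathcal{R}_{\mu_b}(x)$, which is the desired linearization. An alternative, purely analytic route via subordination functions for $\cauchy{\mu_a \boxplus \mu_b}$ also works, but it ultimately relies on the same inversion $\cauchy{\mu}^{-1} = 1/x + \mathcal{R}_\mu$ that makes the cumulant-based proof transparent.
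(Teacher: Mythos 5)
The paper does not actually prove this statement: it is quoted as a classical background theorem with a pointer to \cite{RS}, and your proposal is precisely the standard free-cumulant proof found in that reference --- mixed free cumulants of free variables vanish (Speicher's theorem), cumulants are multilinear and hence additive on $a+b$, and the $R$-transform is the generating series of cumulants. That outline is sound, and invoking the vanishing of mixed cumulants with only a sketched induction is reasonable here, since the paper itself defers the entire statement to the literature. One concrete slip should be fixed, though: the moment--cumulant functional equation is $M_\mu(x)=1+C_\mu\bigl(xM_\mu(x)\bigr)$, without the extra factor $M_\mu(x)$ you wrote. As stated your equation is false (for the semicircle law, where $\kappa_2=1$ and all other cumulants vanish, it yields fourth moment $3$ instead of the Catalan number $2$), and if taken literally it would give $\mathcal{R}_\mu(x)=C_\mu(x)/\bigl(x\,(1-C_\mu(x))\bigr)$ rather than the identity $\mathcal{R}_\mu(x)=C_\mu(x)/x=\sum_{n\ge 1}\kappa_n x^{n-1}$ that your argument needs. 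With the corrected equation, the inversion $\mathcal{G}_\mu^{-1}(x)=\tfrac{1}{x}+\sum_{n\ge 1}\kappa_n x^{n-1}$ follows exactly as you indicate, and the linearization is then immediate from the additivity of cumulants.
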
	    
As the $R$-transform characterizes the distribution, the previous theorem allows us to compute the free sum distribution of two operators. 
 \\

We can extend the definition of freeness for general (non spectral) operators that admit an adjoint.The idea is that an operator $a$ that admits an adjoint $a^{\star}$ is such that $a^{\star}a$ is spectral.

\begin{defn} [joint distribution for $\star$-probability spaces]
 The notion of joint distributions for hermitian operators can be generalized to a similar one for all operators that admit some adjoint.
The joint distribution of $a$ and $b$ consists in all crossed moments:\\
$ \big\{ \phi(a_{1},a_{2} \dotsm a_{n}) |  n \geq 1, a_{i} \in \{a,b,a^{\star},b^{\star}\} \big\}$ where $a^{\star}$ represents the adjoint. 
\end{defn}

 \begin{defn}[freeness for $\star$-probability spaces]
 We say that $a$ and $b$ are free if for all $n$ and all bivariate polynomials $p_1,p_2,\dots,p_{2n}$,
  \[
 \phi[p_1(a,a^{\star})p_2(b,b^{\star})\dotsm p_{2n-1}(a,a^{\star})p_{2n}(b,b^{\star})]=0
 \]
 whenever $\phi[p_{2i-1}(a,a^{\star})]=\phi[p_{2i}(b,b^{\star})]=0 $  for all  $ i \leq n$.
 These equalities give a way to compute recursively all crossed moments in terms of the moments of $a$ and $b$.

 We wish to connect this extended notion of freeness to random matrices.  Let's recall that if a matrix $M$ of size  $m \times 
d$ has singular decomposition $M= UDV$, with $U$ and $V$ orthogonal matrices of size $\mm \times \mm$ 
and $\dd \times \dd$, and $D$ is a nonnegative diagonal matrix of size   $m \times d$, then the 
singular values are the elements of the diagonal of $D$. They also correspond 
to the square roots of the eigenvalues of $M^{T}M$. 
The uniform 
distribution on the singular values will be called the \textit{singular law} of $M$. A 
random matrix is said to be \textit{bi-orthogonally invariant} if its distribution is 
invariant under the left and right actions of the orthogonal group. For a 
probability measure $\mu$  on $\mathbb{R}$, denote by $\tilde{\mu}$ the symmetrization 
of $\mu$, which is the probability measure defined by $\tilde{\mu}(B) 
=\frac{\mu(B) +\mu(-B)}{2}$ for all Borel sets $B$.

\end{defn}
A first step was 
accomplished by Voiculescu \cite{V} who proved the following.
 
 \begin{thm} [free square singular addition]
The asymptotic singular law of the sum of two independent, bi-orthogonally invariant random
square matrices such that the symmetrizations of the respective singular laws 
converge weakly to the probability measures $\mu_1$ and $\mu_2$, respectively, only depends on 
$\mu_1$ and $\mu_2$, and can be expressed easily from $\mu_1$ and $\mu_2$: it 
is the probability measure on $[0,\infty)$, the symmetrization of which is the 
free convolution of $\mu_1$ and $\mu_2$. Notice that if we call the symmetrization of the limiting distribution of the sum  $\mu_1 \mysum{}{1}\mu_2$, then  $\mu_1 \mysum{}{1}\mu_2=  \mu_1 \mysum{}{}\mu_2$. Furthermore, the random matrices become free in the limit. 

\end{thm}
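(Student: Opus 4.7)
The natural approach is the \emph{Hermitization trick}. Given a square $d\times d$ matrix $M$, associate to it the $2d \times 2d$ real symmetric matrix
\[
\widetilde{M} \defeq \begin{pmatrix} 0 & M \\ M^T & 0 \end{pmatrix}.
\]
A direct computation from the singular value decomposition $M = UDV$ shows that the eigenvalues of $\widetilde{M}$ are exactly $\pm \sigma_1(M), \dots, \pm \sigma_d(M)$, so the eigenvalue distribution of $\widetilde{M}$ is precisely the symmetrization of the singular law of $M$. The key observation is that Hermitization is linear: $\widetilde{M_1 + M_2} = \widetilde{M_1} + \widetilde{M_2}$. Thus the problem reduces to controlling the eigenvalue distribution of a sum of two Hermitian matrices, which is exactly Voiculescu's free-probability setting stated earlier in the paper.

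First I would verify that $\widetilde{M_1}$ and $\widetilde{M_2}$ satisfy the hypotheses of the Voiculescu theorem. By assumption the eigenvalue distributions of $\widetilde{M_i}$ converge to $\mu_i$. For the invariance part, write $M_i = U_i D_i V_i$ with $U_i, V_i$ independent Haar orthogonal on $O(d)$; then
\[
\widetilde{M_i} = \Phi_i \begin{pmatrix} 0 & D_i \\ D_i & 0 \end{pmatrix} \Phi_i^T, \qquad \Phi_i = \begin{pmatrix} U_i & 0 \\ 0 & V_i^T \end{pmatrix}.
\]
So $\widetilde{M_i}$ is conjugated by a Haar element of the subgroup $O(d) \times O(d) \subset O(2d)$, not by a full Haar element of $O(2d)$. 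The (only) serious work is to show that this reduced symmetry is nevertheless enough to yield asymptotic freeness of $\widetilde{M_1}$ and $\widetilde{M_2}$.

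The cleanest route is the moment method: by expanding an arbitrary mixed trace
\[
\tfrac{1}{2d}\,\mathbb{E}\operatorname{tr}\bigl[\, p_1(\widetilde{M_1})\, q_1(\widetilde{M_2})\, p_2(\widetilde{M_1})\, q_2(\widetilde{M_2}) \cdots \bigr]
\]
in terms of the entries of $U_1,V_1,U_2,V_2$, one reduces to Weingarten-type sums over pairings that respect the $(d,d)$ block decomposition. Because the off-diagonal-block structure of $\widetilde{M_i}$ alternates upper/lower blocks at each factor, the Haar averaging over $U_i$ alone (respectively $V_i$ alone) produces exactly the same dominant $d^{-|\pi|}$ contributions as the standard orthogonally invariant case; the subdominant pairings vanish in the limit, and the remaining genus-zero pairings reproduce the free cumulant factorization. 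This shows $\widetilde{M_1}$ and $\widetilde{M_2}$ become asymptotically free. Applying Voiculescu's theorem then gives that the eigenvalue distribution of $\widetilde{M_1} + \widetilde{M_2} = \widetilde{M_1 + M_2}$ converges to $\mu_1 \boxplus \mu_2$, and since this equals the symmetrized singular law of $M_1+M_2$, the latter is unique and yields $\mu_1 \mysum{}{1}\mu_2 = \mu_1 \boxplus \mu_2$.

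The main obstacle is precisely the moment step: the block-diagonal-only invariance of $\widetilde{M_i}$ means one cannot blackbox a standard orthogonal-invariance lemma, and one must check that the Weingarten estimates survive when the averaging is restricted to the subgroup $O(d)\times O(d)$. Once that combinatorial input is granted, the rest of the argument is bookkeeping plus a direct appeal to the already-stated linearization theorem for $\mathcal{R}$-transforms.
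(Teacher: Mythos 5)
First, note that the paper itself does not prove this statement: it is quoted as Voiculescu's theorem (cited as \cite{V}) and used as background, so there is no internal proof to compare against. Judged on its own terms, your reduction is sound as far as it goes: the Hermitization $\widetilde{M}=\offdiag{M}$ does have eigenvalues $\pm\sigma_i(M)$, it is linear in $M$, and your block factorization $\widetilde{M_i}=\Phi_i\offdiag{D_i}\Phi_i^T$ with $\Phi_i=\diag{U_i}{V_i^T}$ correctly identifies that the available invariance is only conjugation by the subgroup $\mathcal{O}_d\times\mathcal{O}_d\subset\mathcal{O}_{2d}$, not by the full orthogonal group.

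The genuine gap is exactly the step you flag and then wave through: the claim that Haar averaging over the block subgroup ``produces exactly the same dominant $d^{-|\pi|}$ contributions as the standard orthogonally invariant case'' so that $\widetilde{M_1}$ and $\widetilde{M_2}$ become asymptotically free over the scalars. That assertion is the entire mathematical content of the theorem, and it is not a routine transplant of the usual Weingarten argument: with only block-diagonal invariance, what restricted conjugation invariance gives you in general is freeness \emph{with amalgamation} over the two-dimensional algebra spanned by the block projections, not scalar freeness --- this is precisely why the paper's rectangular generalization (Benaych-Georges, Theorem~\ref{asymptoticssingularBG}) must be formulated with amalgamation and produces a convolution $\mysum{}{\lambda}$ that differs from $\boxplus$ when $\lambda<1$. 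So the statement ``the subdominant pairings vanish and the genus-zero pairings reproduce the free cumulant factorization'' cannot be granted as a black box; one must either carry out the Weingarten/pairing analysis for the subgroup $\mathcal{O}_d\times\mathcal{O}_d$ and show that in the square case ($\lambda=1$) the amalgamated structure collapses to ordinary freeness of the symmetrized distributions, or follow the classical route through $*$-freeness of bi-invariant ensembles and R-diagonal elements (e.g.\ writing the limits as Haar unitary times positive element and proving the addition rule for symmetrized singular distributions). Your proposal names the obstacle correctly but does not overcome it, and the subsequent ``apply Voiculescu's theorem'' step is also not a direct application, since the quoted theorem assumes full orthogonal invariance of each summand, which the Hermitizations do not enjoy. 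The closing appeal to the $R$-transform linearization is redundant once freeness is in hand and does not repair the missing step.
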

  \begin{rmk} \label{asymptoticssingularV}
  If $A_d$ and $B_d$ are two independent square random matrix ensembles and $Q_d$, $R_d$, $\tilde{Q_d}$, $\tilde{R_d}$ are Haar random orthogonal matrices of size $d$, then $Q_d^{T}A_dR_d$ and $\tilde{Q_d}^{T}B_d\tilde{R_d}$ are free in the limit $d$ going to infinity, and we  know the asymptotic singular law of $Q_d^{T}A_dR_d+\tilde{Q_d}^{T}B_d\tilde{R_d}$. But as the singular distribution is unchanged by left and right conjugation, then it is the same as the singular law of $A_d+Q_d^{T}B_dR_d$. 
  \end{rmk}
 
  It was then generalized by Benaych-Georges to 
rectangular matrices (\cite{BG}). However it should be noted that in this case rectangular matrices don't form an algebra, therefore the notion of freeness has to be redefined to deal with delicate undefined products. In this regard,  a notion of freeness with amalgamation is necessary. 
\begin{thm}[free rectangular singular addition from \cite{BG}]\label{asymptoticssingularBG}
Let, for all $n \geq1$,$M(1,n)$ and $M(2,n)$ be independent bi-orthogonally 
invariant $q_1(n) \times q_2(n)$ random matrices with $q_1(n) \geq q_2(n)$, and such that for all i= 1,2, 
the symmetrization of the singular law of $M(i,n)$ converges in probability to 
$\mu_i$. Then the symmetrization of the singular law of $M(1,n)+M(2,n)$ 
converges in probability to a symmetric probability measure on the real line, 
denoted by $\mu_1\mysum{}{\lambda}\mu_2$, which depends only on $\mu_1,\mu_2$,
and $\lambda:= \lim_{n \rightarrow \infty}q_2(n)/q_1(n)$. Notice that $\lambda \in [0,1]$.
\end{thm}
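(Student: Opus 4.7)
The natural approach is a hermitization trick that reduces the singular value question to a standard eigenvalue question to which a version of Voiculescu's square result applies, with care taken for the rectangular asymmetry.

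First I would associate to each rectangular matrix $M(i,n)$ its hermitization
\[
H(i,n) := \begin{pmatrix} 0 & M(i,n) \\ M(i,n)^T & 0 \end{pmatrix}
\]
of size $(q_1(n)+q_2(n)) \times (q_1(n)+q_2(n))$. The nonzero eigenvalues of $H(i,n)$ are exactly the pairs $\pm \sigma_j(M(i,n))$, together with $q_1(n)-q_2(n)$ forced zero eigenvalues. Consequently, the normalized empirical eigenvalue distribution of $H(i,n)$ is a convex combination of $\tilde{\mu}_i$, with weight converging to $\tfrac{2\lambda}{1+\lambda}$, and a Dirac mass at zero, with weight converging to $\tfrac{1-\lambda}{1+\lambda}$. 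Thus the limiting spectral distribution of $H(i,n)$ is a function of $\mu_i$ and $\lambda$ only. Crucially, $H(1,n)+H(2,n)$ is the hermitization of $M(1,n)+M(2,n)$, so once we identify the limiting spectrum of $H(1,n)+H(2,n)$, its restriction to the nonzero part gives (up to the zero mass and a rescaling) the symmetrization of the singular law of $M(1,n)+M(2,n)$.

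Second I would note that bi-orthogonal invariance of $M(i,n)$ is equivalent to invariance of $H(i,n)$ under conjugation by any block-diagonal orthogonal matrix of the form $\mathrm{diag}(U,V)$ with $U \in O(q_1(n))$, $V \in O(q_2(n))$. This is \emph{not} full orthogonal invariance in $O(q_1(n)+q_2(n))$, so Voiculescu's scalar-valued asymptotic freeness theorem cannot be invoked directly. Instead I would work in the noncommutative probability space of $(q_1+q_2)\times(q_1+q_2)$ matrices with amalgamation over the two-dimensional subalgebra $\mathcal{D}$ of matrices that are scalar on each of the two blocks, the conditional expectation being the normalized trace computed block by block. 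In this $\mathcal{D}$-valued setting the weighted combination $H(i,n)$ carries exactly the data $(\mu_i, \lambda)$, and I expect $H(1,n)$ and $H(2,n)$ to be asymptotically free with amalgamation over $\mathcal{D}$. Applying the operator-valued $\mathcal{D}$-valued free additive convolution then produces a limit that depends only on $\mu_1, \mu_2$ and $\lambda$; extracting its nonzero part defines $\mu_1 \mysum{}{\lambda} \mu_2$.

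The main obstacle is the second step: establishing asymptotic freeness over $\mathcal{D}$ rather than over $\mathbb{C}$. Voiculescu's classical proof uses the Haar measure on the full orthogonal group to decorrelate the two ensembles, and here that group must be replaced by the block subgroup $O(q_1)\times O(q_2)$, which has a more restricted Weingarten calculus. The plan is to compute mixed $\mathcal{D}$-valued moments of the form $E_{\mathcal{D}}[p_1(H(1,n))p_2(H(2,n))\cdots]$ after centering, integrate over the block-Haar measure, and check that all cross terms decay to zero in the large-$n$ limit, giving the defining moment identity for $\mathcal{D}$-freeness. Once asymptotic freeness in expectation is in hand, the upgrade from expectation to convergence in probability follows from the standard variance bound arising from the concentration of the spectral measure of bi-orthogonally invariant ensembles, together with dominated convergence on the (compactly supported) moment sequences.
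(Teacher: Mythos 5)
The paper itself offers no proof of this statement: it is imported verbatim from Benaych-Georges \cite{BG}, and the only hint the paper gives about its proof is the remark that rectangular matrices do not form an algebra, so that freeness with amalgamation is required. Measured against that, your framing is the right one and matches the known strategy: the hermitization $H(i,n)$ has eigenvalue law equal to a mixture of $\tilde{\mu}_i$ (weight $\frac{2q_2}{q_1+q_2}\to\frac{2\lambda}{1+\lambda}$) and an atom at zero, $H(1,n)+H(2,n)$ is the hermitization of the sum, and bi-orthogonal invariance is exactly invariance under conjugation by $\mathrm{diag}(U,V)$ with $(U,V)\in O(q_1)\times O(q_2)$, which is why the scalar Voiculescu theorem does not apply and one must amalgamate over the two-dimensional algebra $\mathcal{D}$ of block scalars.

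However, there is a genuine gap: the decisive step --- that $H(1,n)$ and $H(2,n)$ are asymptotically free with amalgamation over $\mathcal{D}$, and that the limiting $\mathcal{D}$-valued distribution of each $H(i,n)$ is determined by $(\mu_i,\lambda)$ alone --- is only announced (``I expect\dots'', ``the plan is to compute mixed moments\dots''), not carried out. That step is the actual mathematical content of the theorem; it requires either a Weingarten-type estimate for the block group $O(q_1)\times O(q_2)$ showing that all centered mixed $\mathcal{D}$-valued moments vanish asymptotically, or, as in \cite{BG}, the construction of the rectangular (operator-valued) $R$-transform to identify the limit and show it depends only on $\mu_1,\mu_2,\lambda$. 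Two further points would still need care even granting it: (i) when $\lambda=0$ the non-atomic part of the spectrum of the hermitization has vanishing weight $\frac{2\lambda}{1+\lambda}=0$, so ``extracting the nonzero part'' no longer recovers the symmetrized singular law of the sum and this degenerate case must be handled separately, whereas the statement allows $\lambda\in[0,1]$; and (ii) your upgrade to convergence in probability invokes dominated convergence on ``compactly supported'' moment sequences, but the hypothesis is only convergence in probability of the symmetrized singular laws, with no compactness or moment bounds, so a truncation argument (or a resolvent/subordination argument) is needed rather than a moment argument as stated.
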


 The tool that plays the role of the Cauchy transform in the rectangular setting is the following quadratic transform. 
 \begin{defn}[from \cite{BG}] \label{defnHtransform} The $ \lambda$-rectangular Cauchy transform of a symmetric compact measure $\mu$ (and $x$ in a nonnegative neighborhood of $0$) is given by
\[
H^{\lambda}_{\mu} (x)= \lambda \big[\mathcal{G}_{\mu} (\frac{1}{\sqrt{x}}) \big]^2 +(1-\lambda)\sqrt{x}\mathcal{G}_{\mu} (\frac{1}{\sqrt{x}}) = x+ \sum_{i=2}^{\infty} h_i^{(p)} x^i,
\]
where the $h_i^{(p)}$ are the coefficients we get in the expansion.
\end{defn}

\begin{lemma} [from \cite{BG}]  As the measure is compact, $H^{\lambda}_{\mu} (0)=0$ and $ \frac{d H^{\lambda}_{\mu}}{dx}(0)=1$ , the rectangular transform is analytic in a neighborhood of  $x=0$, and therefore admits a compositional inverse which is also analytic  in a neighborhood of zero.This inverse will be denoted by $ [{H_{\mu}^\lambda}]^{-1}(x)$.
\end{lemma}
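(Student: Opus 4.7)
The plan is to show everything reduces to a straightforward power series computation, using the symmetry of $\mu$ to kill the square roots that appear in the definition of $H^\lambda_\mu$. Once that is done, the analyticity of the inverse follows from the classical analytic inverse function theorem, applied to a function whose derivative at $0$ is $1$.

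First I would expand $\cauchyy{\mu}{z}$ at infinity. Since $\mu$ is compactly supported in some $[-R,R]$, the moments $m_k = \int t^k\,d\mu(t)$ are bounded by $R^k$, and $\cauchyy{\mu}{z} = \sum_{k \geq 0} m_k / z^{k+1}$ converges for $|z| > R$. The symmetry of $\mu$ means $m_{2k+1}=0$, so only odd powers of $1/z$ appear. Writing $f(x) \defeq 1 + \sum_{k \geq 1} m_{2k} x^k$, which is analytic on $|x| < 1/R^2$ by the same moment bound, I get the clean identity
\[
\cauchyy{\mu}{1/\sqrt{x}} = \sqrt{x}\, f(x).
\]
The square roots then cancel in both summands of Definition~\ref{defnHtransform}:
\[
\bigl[\cauchyy{\mu}{1/\sqrt{x}}\bigr]^2 = x\, f(x)^2, \qquad \sqrt{x}\,\cauchyy{\mu}{1/\sqrt{x}} = x\, f(x),
\]
so $\Htrans{\lambda}{\mu}(x) = x\bigl[\lambda f(x)^2 + (1-\lambda) f(x)\bigr]$. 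This expression is manifestly analytic near $0$, being a product of $x$ with an analytic function of $x$.

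Next I would read off the two values at $0$. Since $f(0)=1$, the bracketed factor at $x=0$ equals $\lambda + (1-\lambda) = 1$, so $\Htrans{\lambda}{\mu}(0) = 0$ and
\[
\frac{d\Htrans{\lambda}{\mu}}{dx}(0) = \lambda f(0)^2 + (1-\lambda) f(0) = 1.
\]
At this point the analytic inverse function theorem applies directly: a function analytic in a neighborhood of $0$ with vanishing value and nonzero derivative at $0$ admits a unique analytic compositional inverse defined in a neighborhood of $0$, which I denote $[\Htrans{\lambda}{\mu}]^{-1}$.

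There is essentially no obstacle here; the only thing one has to be slightly careful about is that the raw expression for $\Htrans{\lambda}{\mu}$ contains $\sqrt{x}$ and $1/\sqrt{x}$, which individually are not analytic at $0$. The symmetry of $\mu$ is exactly what is needed to guarantee that only integer powers of $x$ survive in the combination defining $\Htrans{\lambda}{\mu}$, and I would emphasize that point as the only subtlety in the argument.
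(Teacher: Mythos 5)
Your proof is correct: the moment expansion of $\mathcal{G}_{\mu}$ at infinity, the use of symmetry to eliminate the half-integer powers of $x$, the evaluation $H^{\lambda}_{\mu}(0)=0$, $\frac{d H^{\lambda}_{\mu}}{dx}(0)=1$, and the appeal to the analytic inverse function theorem are exactly the standard argument. The paper itself gives no proof of this lemma (it is quoted from Benaych-Georges), so your write-up simply supplies the expected justification, and your emphasis on the cancellation of $\sqrt{x}$ as the only subtle point is well placed.
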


\begin{defn}\label{Rrectdefn} [following \cite{BG}] For $x$ small enough, let
\[
U^{\lambda}(x):= \frac{-\lambda -1 +\big[ (\lambda+1)^2+4\lambda x\big]^{1/2}}{2\lambda}.
\]
The rectangular  $R$-transform is given by
\[
\mathcal{R}_{\mu}^{\lambda}(x):= U^{\lambda} \Big ( \frac{x}{[{H_{\mu}^\lambda}]^{-1}(x)} -1\Big).
\]
\end{defn}

\begin{rmk}
$\mathcal{R}_{\mu}^{\lambda}(x)$ is also analytic in a neighborhood of zero by theorems of composition given that the square root is well-defined and $\frac{x}{[{H_{\mu}^\lambda}]^{-1}(x)} $ can be expanded at zero (because it doesn't vanish).
\end{rmk}
 
 \begin{thm}[from \cite{BG}]
 The rectangular $R$-transform linearizes the rectangular additive convolution for symmetric measures $\mu_1$ and $\mu_2$,that is, for $x$ small enough (in the domain of convergence):
 \[
 \mathcal{R}_{\mu_1 \mysum{}{\lambda}\mu_2}^{\lambda}(x) = \mathcal{R}_{\mu_1}^{\lambda}(x)+\mathcal{R}_{\mu_2}^{\lambda}(x).
 \]
 \end{thm}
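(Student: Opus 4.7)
The plan is to reduce the claim to the classical linearization theorem (Theorem~\ref{linearization}) by the standard embedding of rectangular matrices into square Hermitian ones. For an $m \times d$ matrix $M$ with $\lambda = d/m$, form the block matrix $\widehat{M} := \begin{pmatrix} 0 & M \\ M^T & 0 \end{pmatrix}$ of size $(m+d)\times(m+d)$; its eigenvalues are the signed singular values of $M$ together with $|m-d|$ zeros. Writing $\nu$ for the spectral distribution of $\widehat{M}$, the measure $\nu$ is a convex combination of the symmetrization $\widetilde{\mu}$ of the singular law of $M$ and a point mass at $0$ with weight $(1-\lambda)/(1+\lambda)$.

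First, I would establish the dictionary between $H^\lambda_\mu$ and the classical Cauchy transform $\mathcal{G}_\nu$. A Schur complement computation on the block resolvent $(zI-\widehat{M})^{-1}$ yields a closed-form relationship expressing $\mathcal{G}_\nu(z)$ as an explicit linear combination of $z^{-1}$, $\mathcal{G}_\mu(1/\sqrt{z^2})$ and $[\mathcal{G}_\mu(1/\sqrt{z^2})]^2$, weighted by $\lambda$ and $1-\lambda$. After the change of variable $x = 1/z^2$, this recovers precisely the quadratic expression appearing in Definition~\ref{defnHtransform}, so that $H^\lambda_\mu(x)$ is (up to elementary substitution) encoding $z\,\mathcal{G}_\nu(z)$; this explains the apparently ad hoc formula for the $H$-transform and is the motivating reason one should expect such a transform to linearize rectangular convolutions.

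Second, I would invoke Theorem~\ref{asymptoticssingularBG} together with Remark~\ref{asymptoticssingularV}: rectangular freeness of $M_1$ and $M_2$ lifts, via the embedding, to freeness with amalgamation over the $2$-dimensional algebra generated by the block projections, applied to $\widehat{M_1}$ and $\widehat{M_2}$. Under this amalgamated freeness the spectral distribution of $\widehat{M_1}+\widehat{M_2}$ is the classical free convolution of the distributions of $\widehat{M_i}$, so that the symmetrization of the singular law of $M_1 \mysum{}{\lambda} M_2$ corresponds precisely to $\nu_1 \boxplus \nu_2$. Theorem~\ref{linearization} then gives $\mathcal{R}_{\nu_1 \boxplus \nu_2} = \mathcal{R}_{\nu_1} + \mathcal{R}_{\nu_2}$.

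Finally, I would translate this scalar identity back into a statement about $\mathcal{R}^\lambda_\mu$. The function $U^\lambda$ is designed so that $\lambda [U^\lambda(x)]^2 + (\lambda+1) U^\lambda(x) = x$, which is exactly the quadratic relation intertwining $\mathcal{G}_\nu$ and $\mathcal{G}_\mu$ from Step~1. Substituting this relation into the identity $\mathcal{R}_{\nu_i}(x) = \mathcal{G}_{\nu_i}^{-1}(x) - 1/x$ and reparametrizing via $x \mapsto x/[H^\lambda_{\mu}]^{-1}(x) - 1$ exhibits $\mathcal{R}^\lambda_\mu$ as the image of $\mathcal{R}_\nu$ under the change of variable induced by $U^\lambda$; adding across $i=1,2$ yields the claimed formula. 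The principal obstacle is the bookkeeping in Step~2: one must verify that the atom of $\nu_i$ at zero is carried through the free convolution with exactly the right weight so that $\nu_1 \boxplus \nu_2$ remains of the same block form coming from a rectangular matrix of ratio $\lambda$, and one must check that amalgamated freeness over the block-projection algebra is the correct lift of rectangular freeness. Once this dictionary is pinned down, the theorem is the translation of Theorem~\ref{linearization} across it.
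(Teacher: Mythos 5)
This theorem is not proved in the paper at all: it is quoted verbatim from Benaych-Georges \cite{BG} as background, so there is no internal proof to compare against. Judged on its own merits, your sketch contains a genuine gap at its central step, and the gap is fatal rather than a bookkeeping issue.

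The breaking point is Step 2. Freeness with amalgamation over the two-dimensional algebra $\mathcal{D}$ spanned by the block projections does \emph{not} imply that the spectral distribution of $\widehat{M_1}+\widehat{M_2}$ is the scalar free convolution $\nu_1\boxplus\nu_2$; amalgamated freeness linearizes the \emph{operator-valued} ($\mathcal{D}$-valued) $R$-transform, not the scalar one, and the hermitizations $\widehat{M_i}$ are not asymptotically free in the scalar sense (each is invariant only under block-diagonal conjugations, not under the full orthogonal group of size $m+d$). A concrete obstruction shows the identification is false: writing $\nu_i=\frac{2\lambda}{1+\lambda}\tilde{\mu}_i+\frac{1-\lambda}{1+\lambda}\delta_0$, the atom of $\nu_1\boxplus\nu_2$ at $0$ has mass $\max\{\frac{1-3\lambda}{1+\lambda},0\}$ (for $\tilde{\mu}_i$ without atoms at $0$), whereas the hermitization of \emph{any} $\lambda$-rectangular singular law, in particular of $\mu_1\boxplus_\lambda\mu_2$, must carry an atom at $0$ of mass exactly $\frac{1-\lambda}{1+\lambda}$; for $\lambda>\tfrac13$ these cannot agree, so $\nu_1\boxplus\nu_2$ is not the spectral distribution of $\widehat{M_1+M_2}$ and Theorem~\ref{linearization} cannot be invoked. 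Indeed, if your reduction worked, the rectangular $R$-transform would be a fixed change of variables of the scalar $R$-transform and the entire $H^\lambda$, $U^\lambda$ machinery would be superfluous; the actual proof in \cite{BG} goes through operator-valued free probability (equivalently, rectangular cumulants) together with an analytic argument, which is precisely why it is cited rather than reproved here.

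Two smaller points. In Step 1, the Schur-complement claim is off: $\mathcal{G}_\nu(z)=\frac{2\lambda}{1+\lambda}\mathcal{G}_{\tilde{\mu}}(z)+\frac{1-\lambda}{1+\lambda}\frac1z$ is purely linear in $\mathcal{G}_{\tilde{\mu}}$, with no squared term; the quadratic structure of $H^\lambda_\mu$ comes from the product of the Cauchy transforms of $M^TM$ and $MM^T$ (equivalently, from compressing by the two block projections), not from $\mathcal{G}_\nu$ alone. Your identity $\lambda[U^\lambda(x)]^2+(\lambda+1)U^\lambda(x)=x$ is correct, but it only reparametrizes the answer and cannot repair the missing linearization input.
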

 
 \subsection{New results and organization of the paper}
 
 \subsubsection{A new polynomial operation on singular values of rectangular matrices}
 We start by introducing in Section~\ref{bivariatedef} a new operation on polynomials with nonnegative roots, or alternatively, on even realrooted polynomials. We define the symmetrization operator: $\Strans{p}$ which denotes for a 
polynomial $p$  with all real nonnegative roots the polynomial $p(x^2)$ (its roots are 
the square-roots of $p$ plus the symmetric negative numbers). So being given the polynomial $p$ with nonnegative roots or the even realrooted polynomial $\Strans{p}$ is the same. In all this paper, we 
will consider a polynomial and the uniform measure $\mu_p$ over its roots as 
giving the same information. 
 If $p:=\prod_i(x-\lambda_i(p))$, and $\lambda_i(p)\geq 0$ for all $i$, we associate to $p$ the measure
\[ \mu_{\Strans{p}}=\frac{1}{2d}\sum_{i=1}^{d}( \delta_{\sqrt{\lambda_i(p)}} + \delta_{-\sqrt{\lambda_i(p)}}).
\] 

 Consider two rectangular matrices $A$ and $B$  of size $m\times d$ where $m\geq d$,  with symmetrized singular distributions $\mu_A$ and $\mu_B$, and   orthogonal matrices $Q$ in $\mathcal{O}_m$  and $R$ in $\mathcal{O}_d$. Free probability (see Theorem~\ref{asymptoticssingularBG}) tells us that the symmetrization of the singular distribution of $A+Q^TBR$  is close to the Benaych-Georges' rectangular free sum $\mu_A \mysum{}{\frac{d}{m}} \mu_B$ when $d$, $m$ are large (for $\lambda= \frac{d}{m}$ fixed)--- that is, the roots of $\chi_{(A+QBR^T)(A+QBR^T)^T}$ can be predicted with a good accuracy. One possible way to create a deterministic finite distribution on the model of free probability that doesn't depend on the instances $A$ and $B$ but only on the distributions is to look at all possible characteristic polynomials $\chi_{(A+QBR^T)(A+QBR^T)^T} $ and average them uniformly. In the limit, as it converges to the same distribution for all random matrices, it will also heuristically converge to this same distribution when we take the expectation. This led us to define the following univariate convolution in \cite{MG1}:
 \begin{defn}[Rectangular singular free sum] \label{rectconv:defn} For $m \times d$ rectangular matrices A and B, define
 \begin{align*}
 \chi_{A^TA}  \mysum{d}{m-d} \chi_{B^TB} &:= \expect{R \in \mathcal{O}_m, Q \in \mathcal{O}_d}{\chi_{(A+QBR^T)(A+QBR^T)^T}}\\
 &= \iint _{\mathcal{O}_m\times \mathcal{O}_d}  \mydet{xI - (A+QBR^T)^T(A+QBR^T)} dR dQ
 \end{align*}
 where the measures are Haar on the respective orthogonal groups. 
 \end{defn}
 
 We derived a binomial formula that enabled us to extend the convolution to polynomials of degree at most $d$:
 \begin{thm}[From \cite{MG1}]
 Consider two polynomials $p$ and $q$ with only real nonnegative roots (they can be written as $ \chi_{A^TA} $ and  $\chi_{B^TB}$ for some $m\times d$ matrices $A$ and $B$). If  we write $p(x)= \sum_{i=0}^d (-1)^ia_ix^{d-i}$ and $q(x)= \sum_{i=0}^d (-1)^i b_ix^{d-i}$ the following holds
\begin{align*}
p  \mysum{d}{m-d} q = \sum_{k=0}^d x^{d-k}(-1)^k \sum_{i+j=k} \frac{(d-i)!(d-j)!}{d!(d-k)!}\frac{(m-i)!(m-j)!}{m!(m-k)!}a_ib_j
\end{align*}

 \end{thm}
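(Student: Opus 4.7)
By definition $p \mysum{d}{m-d} q = \expect{Q,R}{\chi_{N^TN}}$, where $N = A + QBR^T$ and $(Q,R)$ is Haar on the two relevant orthogonal groups. Since the integrand depends only on the singular spectra of $A$ and $B$, we may assume these matrices are put in diagonal rectangular form with their singular values on the diagonal. The coefficient of $x^{d-k}$ in $\chi_{N^TN}$ is $(-1)^k e_k(N^TN)$, and Cauchy--Binet reduces this to
\[
e_k(N^TN) \;=\; \sum_{\substack{S \subset [m],\, T \subset [d] \\ |S|=|T|=k}} \det(N_{S,T})^2,
\]
so the task becomes the computation of $\expect{Q,R}{\det(N_{S,T})^2}$ and the sum over $(S,T)$.

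Writing $N_{S,T} = A_{S,T} + (QBR^T)_{S,T}$ and using multilinearity of the determinant in its columns,
\[
\det(N_{S,T}) \;=\; \sum_{J \subset T} \varepsilon_J \, \det\bigl[A_{S,T \setminus J} \,\bigl|\, (QBR^T)_{S,J}\bigr],
\]
for shuffle signs $\varepsilon_J \in \{\pm 1\}$. Squaring produces a double sum over pairs of subsets $(J_1, J_2) \subset T \times T$, and the heart of the proof is to evaluate the resulting Haar integrals. Expanding each column of $QBR^T$ as $\sum_{\alpha,\beta} Q_{\cdot \alpha} B_{\alpha\beta} R_{\cdot \beta}$, the Haar integrals over $\mathcal{O}_m$ and $\mathcal{O}_d$ decouple. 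By orthogonal invariance (equivalently, the Weingarten formula), each integral vanishes unless $|J_1| = |J_2| =: j$, and an orbit-integration argument then identifies the $\mathcal{O}_m$-factor with $\frac{(m-i)!(m-j)!}{m!(m-k)!}$ and the $\mathcal{O}_d$-factor with $\frac{(d-i)!(d-j)!}{d!(d-k)!}$, where $i := k-j$ is the common number of $A$-columns in the two factors.

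Reassembly: after integration, the ``$A$-parts'' of the two determinantal factors, together with the sum over the row subset $S$ and over the column indices in $T \setminus J$, recombine by a second application of Cauchy--Binet into the elementary symmetric polynomial $e_i(A^T A) = a_i$. Symmetrically, the ``$B$-parts'' combine into $b_j$. Summing over $i + j = k$ yields the stated closed formula.

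\textbf{Main obstacle.} The delicate step is the Haar integration: one must show that integrating a product of two $j \times j$ mixed minors of $QX$ against Haar measure on $\mathcal{O}_n$ collapses cleanly to the claimed falling-factorial ratio times the corresponding determinant of the $X^T X$-block. This can be done either by careful bookkeeping of pair matchings, row/column shuffles, and cancellations, or more cleanly via invariant theory (zonal spherical functions). In either case, the identity is essentially the rectangular analogue of the one underlying the square case in \cite{M}, now applied on both sides simultaneously, and it is this simultaneous application that produces the product of two falling-factorial ratios in the final expression.
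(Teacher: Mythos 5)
The theorem you are proving is quoted in this paper from the companion work \cite{MG1}; the present paper contains no proof of it, so your attempt has to stand on its own, and as written it is a plan rather than a proof. The painless parts are fine: the coefficient of $x^{d-k}$ is $(-1)^k e_k(N^TN)$ with $N=A+QBR^T$, Cauchy--Binet turns this into $\sum_{|S|=|T|=k}\expect{Q,R}{\det(N_{S,T})^2}$, and multilinearity splits each determinant according to which columns come from $A$ and which from $QBR^T$. But everything that actually produces the claimed constants is asserted rather than established. First, the vanishing of the cross terms with $|J_1|\neq|J_2|$ does not follow from the cheap parity symmetry $Q\mapsto -Q$ (that only kills $|J_1|+|J_2|$ odd); what is needed is the genuine orthogonality statement for minors of a Haar orthogonal matrix, namely that $\mathbb{E}_Q\big[\det(Q_{S,\alpha})\det(Q_{S',\alpha'})\big]$ vanishes unless the two minors have the same size and the same row and column index sets, and equals an explicit reciprocal binomial constant on the diagonal (Schur orthogonality for the exterior-power representations, or a direct Weingarten/quadrature computation). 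You flag this only as the ``main obstacle'' and offer two possible ways it ``can be done'', which is precisely the point at which the proof stops being a proof.

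Second, and more importantly, the reassembly step is the entire content of the theorem and is nowhere carried out: after a Laplace expansion in the row sets, Cauchy--Binet applied to $\det\big((QBR^T)_{S_2,J}\big)$, and the minor-orthogonality lemma forcing the index sets in the two squared factors to match, one must sum over $S$, $T$, $J$ and all intermediate index sets and show that the count collapses to $e_i(A^TA)\,e_j(B^TB)=a_ib_j$ times exactly $\frac{(d-i)!(d-j)!}{d!(d-k)!}\frac{(m-i)!(m-j)!}{m!(m-k)!}$. Saying that ``an orbit-integration argument then identifies'' these factors names the answer; it does not derive it, and the falling-factorial ratio is precisely what distinguishes this rectangular formula from a generic invariance statement. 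The route itself is viable --- it is essentially the minor-expansion and expected-products-of-minors machinery by which \cite{MG1} establishes the formula (the rectangular analogue of the square case in \cite{MSS}) --- but until the orthogonality lemma is stated and proved and the subset bookkeeping yielding the two falling-factorial ratios is done explicitly, your argument has a genuine gap at its core.
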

 We also proved that such a convolution is real-rooted with nonnegative roots:
  \begin{thm}[Realrootedness of the convolution, from \cite{MG1}]\label{thm:rr}
 For $p$,$q$ polynomials with nonnegative real roots of degrees at most than $d$, the polynomial we get through the convolution is real rooted with nonnegative roots (the operation $\mysum{d}{m-d}$ is stable over polynomials with nonnegative real roots). Furthermore, the operation $\mysum{d}{m-d}$ is associative and bilinear. 
 \end{thm}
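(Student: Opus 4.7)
The plan is to establish real-rootedness via the interlacing-families technique of Marcus, Spielman, and Srivastava, and to obtain the algebraic properties directly from the integral representation. The key starting observation is that for \emph{any} individual $(Q, R) \in \mathcal{O}_d \times \mathcal{O}_m$, the polynomial $\chi_{(A+QBR^T)^T(A+QBR^T)}$ is automatically real-rooted with nonnegative roots, since it is the characteristic polynomial of the positive semidefinite $d \times d$ matrix $M^T M$ for $M = A + QBR^T$. Thus the entire content of the realrootedness claim lies in showing that averaging such polynomials against the two-sided Haar measure preserves real-rootedness and nonnegativity of the roots.

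The concrete steps I would carry out are the following. First, decompose the Haar measures on $\mathcal{O}_d$ and on $\mathcal{O}_m$ into iterated compositions of elementary Givens rotations, so that the double expectation defining $p \mysum{d}{m-d} q$ factors into a finite sequence of one-parameter integrations over circles. Second, for each elementary one-parameter family $Q(t)$ (and symmetrically for $R(t)$), I would show that the family $\{\chi_{(A+Q(t)BR^T)^T(A+Q(t)BR^T)}\}_{t}$ forms an interlacing family in the sense of \cite{MSS}: varying along $SO(2)$ perturbs $M^T M$ by a structured matrix of rank at most two, so a Cauchy-interlacing argument should provide a common interlacer along the whole circle. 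Third, invoke the MSS interlacing lemma to conclude that the integral over each elementary rotation is real-rooted, then iterate over a full Givens decomposition of both orthogonal groups. Nonnegativity of the roots is preserved at each step because the integration is a limit of convex combinations of polynomials whose roots lie in $[0,\infty)$, and this closed half-line is preserved in the limit.

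Bilinearity follows immediately from the binomial formula of \cite{MG1}, or equivalently from the multilinearity of the determinant in the coefficients $a_i$ and $b_j$ of $p$ and $q$. For associativity, I would introduce a third matrix $C$ with additional independent Haar pairs $(Q', R')$ and write
\[(\chi_{A^TA} \mysum{d}{m-d} \chi_{B^TB}) \mysum{d}{m-d} \chi_{C^TC} = \expect{Q,R,Q',R'}{\chi_{(A + QBR^T + Q'CR'^T)^T(A + QBR^T + Q'CR'^T)}}\]
where the inner randomization has been absorbed into the outer one using left-right Haar invariance; the right-hand side is manifestly symmetric in the roles of $B$ and $C$ and matches the other grouping. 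The step I expect to be the main obstacle is the verification of the interlacing-family hypothesis along a single Givens rotation in the two-sided rectangular setting: in the square-Hermitian case of \cite{MSS} the elementary rotation acts by conjugation and the analysis is classical, but here $Q$ and $R$ act on opposite sides of $B$ and interact asymmetrically with the fixed term $A$, so tracing the effect on $M^T M = A^T A + A^T Q B R^T + R B^T Q^T A + R B^T B R^T$ along a one-parameter subgroup and producing the required common interlacer should demand noticeably more delicate bookkeeping than in the square case.
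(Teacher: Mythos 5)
The realrootedness part of your plan fails at precisely the step you flag as the ``main obstacle,'' and it fails for a structural reason, not a bookkeeping one: the one-parameter families obtained by varying a single Givens rotation (all other parameters fixed) do not form interlacing families, and their averages need not be real-rooted at all. Concretely, take $d=m=2$, $A=aI_2$, $B=bI_2$ with $ab\neq 0$, fix $R=I$, and let $Q(t)$ be rotation by angle $t$ (this is exactly one elementary Givens integration in your scheme). Then
\[
(A+Q(t)B)^T(A+Q(t)B)=\bigl(a^2+b^2+2ab\cos t\bigr)I_2 ,
\]
so the family consists of the polynomials $\bigl(x-(a^2+b^2+2ab\cos t)\bigr)^2$: a double root sliding along an interval. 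Two members with distinct double roots $\alpha\neq\beta$ admit no common interlacing (a separating root would have to equal both $\alpha$ and $\beta$), and indeed the uniform average over $t$ is $\bigl(x-(a^2+b^2)\bigr)^2+2a^2b^2$, which has no real roots. So no Cauchy-interlacing argument can supply a common interlacer along the circle, and the strategy of integrating out one rotation angle at a time while preserving real-rootedness collapses at the first step. Note also that the perturbation of $M^TM$ along a Givens rotation has rank two, and rank-two p.s.d.\ updates do not enjoy the automatic common interlacer that rank-one updates do. Real-rootedness of the convolution is a genuinely global property of the full two-sided Haar average over the orthogonal groups; in the example above it is the reflection part of $O(2)$ that rescues it, since averaging over all of $O(2)\times O(2)$ gives $\bigl(x-(a^2+b^2)\bigr)^2-a^2b^2=(x-(a+b)^2)(x-(a-b)^2)$. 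Your final remark on nonnegativity has the same problem upstream: convex combinations of polynomials with roots in $[0,\infty)$ need not be real-rooted, so ``the half-line is preserved in the limit'' only makes sense once real-rootedness of the average is already known.

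For comparison, the present paper does not prove this theorem; it imports it from \cite{MG1}, and the proof there does not proceed by decomposing the Haar measure into elementary rotations --- the interlacing-families machinery of \cite{MSS} is used in that line of work to compare a member of a family with the expected polynomial, not to establish real-rootedness of expectations by iterated one-parameter averaging. The algebraic half of your proposal is essentially fine: bilinearity does follow from the coefficient (binomial) formula, and associativity is most transparently read off from the differential-operator representation the paper records in Lemma~\ref{bivsum} and Lemma~\ref{diffop:sum}, where $p\recsum q=\mathcal{P}\mathcal{Q}\{y^mx^d\}$. Your probabilistic associativity sketch is repairable, but as written it silently uses that $\iint \chi_{(M+Q'CR'^T)^T(M+Q'CR'^T)}\,dQ'\,dR'$ depends on the fixed matrix $M$ only through $\chi_{M^TM}$, and linearly in its coefficients; that needs to be said (it follows from bi-invariance of Haar measure together with the binomial formula). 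The real-rootedness claim, however, requires a different idea than the one you propose.
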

In this paper, we extend the definition to special bivariate polynomials.
\begin{defn}
Fix $m$ and $d$. 
 If $p$ is a realrooted polynomial with nonnegative real roots of degree $d$, define the "rectangular" polynomial extension of order $m-d$ as $p(x,y):= y^{m-d}p(xy)$. 
\end{defn}

\begin{rmk}
If for a $m\times d$ matrix $A$, $p(x)=  \chi_{A^TA}$ then 
\[
p(x, y) 
= \det \mymatrix{A}.
\]
\end{rmk}
Now given that if $p$ has only nonnegative roots, such a matrix $A$ can always be exhibited whatever the value of $m \geq d$, we are led to the following generalization:

 \begin{repdefn}{rectconvdefn}[Rectangular bivariate convolution]
 If
\[
p(x, y) 
= \det \mymatrix{A}
\AND
q(x, y) 
= \det \mymatrix{B},
\] 
then for $\lambda:=\frac{d}{m}$ (for a given $d$, $m$ and $\lambda$ are in bijection), define the $\lambda$ -{\em rectangular additive convolution} of $p$ and $q$ as
\begin{align*}
[p \recsum q](x, y) 
&:= \iint 
\mydet{
\mymatrix{A} + 
\begin{bmatrix}
\hat{Q} & 0  \\
 0 & \hat{R}
\end{bmatrix}
\mymatrix{B}
\begin{bmatrix}
\hat{Q} & 0  \\
 0 & \hat{R}
\end{bmatrix}^T
}
d\hat{Q} d\hat{R}
\\&= \iint
\det \mymatrix{(A + \hat{Q} B \hat{R})}
  d\hat{Q} d\hat{R},
\end{align*}
where $d\hat{Q}, d\hat{R}$ can be taken to be the Haar measure over the orthogonal group (it could be made more general but we won't extend it for the sake of simplicity).
\end{repdefn}

 \subsubsection{Defining a transform fit for this polynomial convolution}
In Section~\ref{defnRfinite}, we  associate bijectively to every polynomial of degree $d$ with nonnegative real roots $p$ its finite $R$-transform $\mathcal{R}^{d,\lambda}_{\Strans{p}}(s) $ as another polynomial of degree $d$. We prove the following crucial linearization emulating property:
\begin{repthm}{Rtransform:additivity}
For two polynomials $p$ and $q$ with nonnegative real roots of degree $d$, we have
\[
\mathcal{R}^{d,\lambda}_{\Strans{[p \boxplus_{d,\lambda} q]}}(s) = \mathcal{R}^{d,\lambda}_{\Strans{p}}(s) +\mathcal{R}^{d,\lambda}_{\Strans{q}}(s) .
\]
\end{repthm}
It is the direct analogue of the free probability additivity property that defines the free $R$-rectangular transform:
\begin{align*}
 \Rtransform{\mu_{\Strans{p}} \recsumfree \mu_{\Strans{q}}} (s) &=  \Rtransform{\mu_{\Strans{p}}} (s) + \Rtransform{\mu_{\Strans{q}}} (s) .
\end{align*}
 
 \subsubsection{Adapting free probability notions to connect them to polynomials}
 We derive in Section~\ref{adapt} more simple formulas for discrete measures. In the following definition we define the $\lambda$-rectangular Cauchy transform of a polynomial $p$  that is not exactly the $\lambda$-rectangular Cauchy transform of a measure $\mu$ from the Definition~\ref{defnHtransform}.
\begin{repdefn}{Htransform:defn}
For a polynomial $p$, the $\lambda$-rectangular Cauchy transform of its symmetrized polynomial $\Strans{p}$ with all roots nonnegative is given by
\[
\Hcauchy{\Strans{p}}(x):= \mathcal{G}_{\mathbb{S}p}(x) \Big(  \lambda \mathcal{G}_{\mathbb{S}p}(x)+ (1-\lambda) {\mathcal{G}}_0(x)  \Big)  \textit{\ \ \ for \ } x> \sqrt{\maxroot{p}}.
\]
\end{repdefn}
Recall that ${\mathcal{G}}_0(x) = \frac{1}{x}$. 
It is easy to check that
\[
\Hcauchy{\Strans{p}}(x)=H^{\lambda}_{\mu_{\Strans{p}}} (\frac{1}{x^2}).
\]
This definition is a slightly modified version of the Benaych Georges' transform fit for polynomials. Our definition also incorporates the symmetrization. 
\begin{replemma} {Hinverse:defn} $\Hcauchy{\Strans{p}}$ is a bijection from $[\sqrt{\maxroot{p}}, +\infty]$ to $ [0,+\infty]$. We denote the inverse by $\Jcauchy{\Strans{p}}$. In particular, for $x> 0$,
\[
\Hcauchy{\Strans{p}} \circ \Jcauchy{\Strans{p}} [x]=x,
\]
and for  $ x>\sqrt{\maxroot{p}}$,
\[
\Jcauchy{\Strans{p}} \circ \Hcauchy{\Strans{p}} [x]=x.
\]
\end{replemma}

We can rewrite the $R$-transform for polynomials:
\begin{replemma}{Rtransformform}
\[
\Rtransform{\Strans{p}}(s^2):= \Rtransform{\mu_{\Strans{p}}}(s^2)= \frac{-\lambda -1}{2\lambda} +\sqrt{ \frac{(\lambda-1)^2}{4\lambda^2}+  \frac{ s^2 [\Jcauchy{\Strans{p}}(s^2)]^2 }{\lambda} }.
\]

\end{replemma}

\subsubsection{Defining a modified finite $R$-transform}
We define in Section~\ref{modifiedtransform} an alternative power series  $\widetilde{\mathcal{R}}^{d,\lambda}_{\Strans{p}}(s) $ that converges by design to $ \Rtransform{\Strans{p}} (s^2)$ using approximations of $\Jcauchy{\Strans{p}}(s^2)$.

\begin{repthm}{Convmodified}[Convergence of the modified finite $R$-transform to the free $R$-transform] 
For $s$ small enough,
\[
\widetilde{\mathcal{R}}^{d,\lambda}_{\Strans{p}}(s) \xrightarrow{d\to \infty}    \Rtransform{\Strans{p}} (s^2).
\]
\end{repthm}

 Secondly, we show it that has the same first $d/2$ coefficients as  $\mathcal{R}^{d,\lambda}_{\Strans{p}}(s)$:
\begin{repthm}{partialequality}
 \[
 \widetilde{\mathcal{R}}^{d,\lambda}_{\Strans{p}}(s)  \equiv \mathcal{R}^{d,\lambda}_{\Strans{p}}(s^2)   \mod [ s^{d+1}].
 \]
 \end{repthm}

\subsubsection{Convergence: from finite free probability to free probability}

In Section~\ref{sec:convergence}, we gather all the pieces to prove the main theorem of this paper. 
 \begin{repthm}{convergence} Coefficientwise or for $s$ small enough, the following power series convergence holds:
 \[
\mathcal{R}^{d,\lambda}_{\Strans{p}}(s)  \xrightarrow{d\to \infty}  \Rtransform{\Strans{p}} (s).
\]
By coefficientwise convergence we mean that each sequence of coefficients of  $\mathcal{R}^{d,\lambda}_{\mu_A}(s)$ converges to the corresponding coefficient of  $\mathcal{R}^{\lambda}_{\mu_A}(s)$, which is a combinatorial statement. Pointwise convergence is stronger and means that the overall power series in $s$ converges  (as $d$ goes to infinity) to the asymptotic power series; it is an analytic statement. 
 \end{repthm}

\subsubsection{Polynomials as random variables: limit theorems}
Finally, in Section~\ref{limittheorems}, we show that the connexion between probability and polynomials goes actually further. We consider polynomials of the form $\Strans{p}$ as random variables in our rectangular polynomial framework. 
For $p$ of degree $d$, and $\alpha>0$, define the renormalizing operator on the roots $\mathit{R_{\alpha}}(p):= \alpha^{-d}p(\alpha x)$.
$\Strans{q}=x^{2d}$ is the zero polynomial in terms of rectangular addition in the sense that $\Strans{[p \boxplus_{d,\lambda} q]}=  \Strans{p}$. 
For two polynomials $p$ and $q$, we will use $p \approx q$ to express that they have the same roots but not the same leading coefficient.
 \begin{repprop}{lawlargenumbers}[Law of large numbers] Let $p_1,p_2$,... be a sequence of degree $d$ polynomials with real nonnegative roots and means uniformly bounded by $\sigma^2$, that is,
\begin{align*}  & p_i:= \prod_j (x-r_{i,j}^2) &\text{and}&&  \frac{1}{d}\sum_j r_{i,j}^2 \leq \sigma^2 &
\end{align*}  
then
\[
\lim_{N \to \infty} \mathit{R_{1/{N}}}(\Strans{[p_1\boxplus_{d,\lambda}... \boxplus_{d,\lambda} p_N]}) (x)\approx  x^{2d}.
\]
\end{repprop}
 
\begin{replemma}{laguerrecumulant}[Laguerre polynomials]
For p of degree $d$ with nonnegative real roots, and $\sigma^2>0$, the following are equivalent:
\begin{enumerate}
\item $\mathcal{R}^{d,\lambda}_{\Strans{p}} (s) =m\sigma^2 s$, so only the first nontrivial cumulant is nonzero.
\item $ p \approx L_d^{(m-d)}(\frac{x}{\sigma^2})$, that is $p$ is up to scaling a generalized Laguerre polynomial of parameter $m-d$.
\end{enumerate}
\end{replemma}
Therefore Laguerre polynomials play the role of the Gaussians in our framework, which is made clearer by the following. 
\begin{repprop}{centrallimittheorem}[Central limit theorem] Let $p_1,p_2$,... be a sequence of degree $d$ with real nonnegative roots and same mean $\sigma^2$, that is,
\begin{align*}  & p_i= \prod_j (x-r_{i,j}^2) &\text{and}&&  \frac{1}{d}\sum_j r_{i,j}^2=\sigma^2 &
\end{align*} 
then
\[
\lim_{N \to \infty} \mathit{R_{1/\sqrt{N}}}(\Strans{[p_1\boxplus_{d,\lambda}... \boxplus_{d,\lambda} p_N]}) (x)\approx  L_d^{(m-d)}(\frac{x^2m}{\sigma^2}).
\]
\end{repprop}

\section{Bivariate definition of the convolution} \label{bivariatedef}
We first need to generalize the usual characteristic polynomial in order to store information not only about the singular values but also about the dimensions. 
The main issue is that there really are two different characteristic polynomials associated with a given set of singular values, that is, if $A$ is an $m \times d$ matrix with $ m \geq d$,
\[
\psi_A(x, 1) := \det[x \ii_d- A^TA]  
\]     
and
\[
\psi_A(1, x) := \det[x \ii_m- AA^T] .
\]
The first one contains all the singular values but fails to give information about the additional rectangular dimension, $m$. The second gives  $m$ but not $d$, and adds a bunch of zeros to the singular values. We need to keep track of both dimensions and in this respect, considering a bivariate polynomial works well.
 We consider
\begin{align*}
\psi_A(x, y) 
&= \det \mymatrix{A}
\\&= y^{m-d} \mydet{ x y \ii_{d} - A^TA}\\
&= \left(\frac{1}{x}\right)^{m-d} \mydet{ x y \ii_{m} - AA^T}.
\end{align*}
Note that although this is ``technically'' a bivariate polynomial, it is in 
some sense a univariate polynomial with an extra dimension term added on.  
Now, let $A, B$ be $m \times d$ matrices and let their generalized (singular) 
characteristic polynomials be:
\[
p(x, y) 
= \det \mymatrix{A}
\AND
q(x, y) 
= \det \mymatrix{B}.
\] 
 \begin{defn}\label{rectconvdefn}
For $\lambda:=\frac{d}{m}$ (for a given $d$, $m$ and $\lambda$ are in bijection), define the $\lambda$ -{\em rectangular additive convolution} of $p$ and $q$ as
\begin{align*}
[p \recsum q](x, y) 
&:= \iint 
\mydet{
\mymatrix{A} + 
\begin{bmatrix}
\hat{Q} & 0  \\
 0 & \hat{R}
\end{bmatrix}
\mymatrix{B}
\begin{bmatrix}
\hat{Q} & 0  \\
 0 & \hat{R}
\end{bmatrix}^T
}
d\hat{Q} d\hat{R}
\\&= \iint
\det \mymatrix{(A + \hat{Q} B \hat{R})}
  d\hat{Q} d\hat{R}
\end{align*}
where $d\hat{Q}, d\hat{R}$ can be taken to be the Haar measure over the orthogonal group (it could be made more general but we won't extend it for the sake of simplicity).
\end{defn}
\begin{rmk}\label{rectconvspec}
We get the univariate rectangular convolution by plugging $y=1$:
\[
[p \recsum q](x, 1)= p \mysum{d}{m-d}q=  \iint \mydet{  x \ii - (A + \hat{Q} B \hat{R})^T (A + \hat{Q} B \hat{R}))  } d\hat{Q} d\hat{R}.
\]
 In this paper, we will denote by $\boxplus_{d,\lambda}$ the convolution $\mysum{d}{m-d}$ for $\lambda=\frac{d}{m}$ even when restricting to the univariate case. This will be convenient as the parameter that is constant will be $\lambda$ and not $m-d$. 
\end{rmk}
%%%%%%%%%%%%%%%%%%%%%
\begin{prop}\label{rot}
Let $A, B$ be $m \times d$ matrices and let
\[
p(x, y) 
= \det \mymatrix{A}
\AND
q(x, y) 
= \det \mymatrix{B}.
\] 

There exists an $m \times d$ matrix $C$ such that
\[
[p \recsum q](x, y) = \det \mymatrix{C}.
\]
\end{prop}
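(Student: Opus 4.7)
My plan is to show that $[p\recsum q](x,y)$ must already have the separable form $y^{m-d}G(xy)$ for some monic polynomial $G$ of degree $d$, and then to exhibit $C$ by leveraging the univariate real-rootedness result already proved in the companion paper.

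First I would observe that for any $m \times d$ matrix $M$, the Schur complement applied to $\mymatrix{M}$ gives
\[
\det\mymatrix{M} = y^{m-d}\,\mydet{xy\,\ii_d - M^T M},
\]
so this determinant automatically factors as $y^{m-d}P_M(xy)$ with $P_M$ monic of degree $d$ in the single variable $xy$. Applying this pointwise to $M = A+\hat Q B\hat R$ inside the definition of the convolution and pulling $y^{m-d}$ out of the integral, one obtains $[p\recsum q](x,y) = y^{m-d}G(xy)$, where $G(z) := \iint \mydet{z\,\ii_d - (A+\hat Q B\hat R)^T(A+\hat Q B\hat R)}\,d\hat Q\,d\hat R$ is itself monic of degree $d$.

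Next I would specialize $y=1$ and compare with Remark~\ref{rectconvspec} to identify $G(x) = p \mysum{d}{m-d} q\,(x)$, the univariate rectangular convolution. By Theorem~\ref{thm:rr}, this polynomial has only real nonnegative roots $r_1,\ldots,r_d \ge 0$, so $G(z) = \prod_{i=1}^d (z-r_i)$. It then remains only to build $C$: take any $m\times d$ matrix with singular values $\sqrt{r_1},\ldots,\sqrt{r_d}$, for instance the rectangular diagonal one, so that $C^T C$ has eigenvalues $r_1,\ldots,r_d$ and
\[
\det\mymatrix{C} = y^{m-d}\prod_{i=1}^d(xy-r_i) = y^{m-d}G(xy) = [p\recsum q](x,y).
\]

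The main obstacle is really not in this proposition itself — it is the real-rootedness and nonnegativity claim of Theorem~\ref{thm:rr}, cited from the companion work. The bivariate shape $y^{m-d}G(xy)$ is forced by the Schur complement and requires essentially no work, but without Theorem~\ref{thm:rr} the polynomial $G$ could a priori have complex or negative roots and no rectangular $m\times d$ matrix $C$ would realize it. In that sense the proposition is essentially a repackaging of the univariate statement into bivariate form.
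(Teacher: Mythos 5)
Your proof is correct and follows essentially the same route as the paper: invoke Theorem~\ref{thm:rr} to get nonnegative real roots of the univariate convolution, then take $C$ to be any $m\times d$ matrix whose singular values are the square roots of those roots. The only difference is that you explicitly justify the separable shape $y^{m-d}G(xy)$ by applying the Schur-complement factorization pointwise under the integral, a step the paper's proof leaves implicit when it passes from $[p \recsum q](x,1)$ back to the bivariate identity.
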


\begin{proof}
Using Theorem \ref{thm:rr}, we get that $ [p \recsum q](x, 1) $ has nonnegative roots. Therefore we can take square-roots of such values, and we choose $C$ to be a matrix such that it's singular values are these square roots. We get
\[
[p \recsum q](x, 1) = \det(x \ii_d- C^TC ).
\]
It follows that
\[
[p \recsum q](x, y) = y^{m-d}\det(xy \ii_d- C^TC ).
\]

\end{proof}
It  defines a ``rotation invariant'' operation on the algebra of singular 
values. To push the analogy with the hermitian case (see \cite{MSS}), we can also exhibit a derivative expression.

\begin{lemma} \label{bivsum}
Consider polynomials $p$ and $q$ with nonnegative real roots and $p(x,y)=y^{m-d}p(xy), \\
q(x,y)=y^{m-d}q(xy)$ their bivariate extensions, then
\[
[p \recsum q ](x,y)= \frac{(m-d)!}{d! m!}\sum_{k=0}^{d} (\deriv{x}\deriv{y})^{d-k}p(x,y)  (\deriv{x}\deriv{y})^{k}q(0,1),
\]
\[
[p \mysum{d}{m-d}q ](x)= \frac{(m-d)!}{d! m!}\sum_{k=0}^{d} (\deriv{x}\deriv{y})^{d-k}p(x,1)  (\deriv{x}\deriv{y})^{k}q(0,1).
\]
\end{lemma}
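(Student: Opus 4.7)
The univariate identity is the $y=1$ specialization of the bivariate one, so it suffices to establish the bivariate formula; I would do this by matching coefficients of $x^{d-\ell}y^{m-\ell}$ on both sides.

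Writing $p(z)=\sum_{i=0}^d (-1)^i a_i z^{d-i}$, the bivariate extension $p(x,y)=y^{m-d}p(xy)$ expands in the monomial basis as $\sum_i (-1)^i a_i x^{d-i} y^{m-i}$, and similarly for $q$. A direct computation shows that $\partial_x\partial_y$ acts diagonally on each bigraded piece $\mathrm{span}(x^{d-i} y^{m-i})$:
\[
(\partial_x\partial_y)^k (x^{d-i} y^{m-i}) = \frac{(d-i)!(m-i)!}{(d-i-k)!(m-i-k)!}\, x^{d-i-k} y^{m-i-k} \quad (k \leq d-i).
\]
In particular, evaluating $(\partial_x\partial_y)^k q$ at $(0,1)$ selects the single monomial with $i=d-k$, giving $(-1)^{d-k} b_{d-k} \frac{k!(m-d+k)!}{(m-d)!}$.

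Substituting these expressions into the right-hand side of the claimed identity and collecting the coefficient of $x^{d-\ell} y^{m-\ell}$, only the contributions with $k=d-j$ and $i+j=\ell$ survive. After bookkeeping the factorials and multiplying by the prefactor $\frac{(m-d)!}{d!\,m!}$, the coefficient becomes
\[
(-1)^\ell \frac{1}{d!\,m!(d-\ell)!(m-\ell)!} \sum_{i+j=\ell} a_i b_j (d-i)!(d-j)!(m-i)!(m-j)!.
\]
On the other hand, the bivariate convolution satisfies $[p\recsum q](x,y) = y^{m-d}[p\mysum{d}{m-d}q](xy)$, and plugging this relation into the explicit binomial formula from \cite{MG1} yields exactly the same coefficient of $x^{d-\ell} y^{m-\ell}$ for every $\ell$. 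Comparing term by term closes the argument.

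No conceptual obstacle arises; the only delicate point is keeping the factorials straight after the reindexing $j=d-k$ and $\ell=i+j$. Morally, the identity says that $\partial_x\partial_y$ is the natural bidegree-lowering operator on rectangular extensions and that evaluation at $(0,1)$ is dual to extracting a single coefficient of $q$, mirroring the role played by $\partial_x^k|_{x=0}$ in the Hermitian setting of \cite{MSS}.
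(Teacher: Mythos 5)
Your proof is correct and takes essentially the same route as the paper: both arguments reduce the identity to the explicit binomial formula of \cite{MG1} via the relation $[p \recsum q](x,y)=y^{m-d}[p\mysum{d}{m-d}q](xy)$, combined with the observation that evaluating $(\partial_x\partial_y)^k q$ at $(0,1)$ isolates a single coefficient of $q$. The only difference is presentational — the paper verifies the formula on monomials and extends by bilinearity, while you match coefficients of $x^{d-\ell}y^{m-\ell}$ directly, which is the same computation.
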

\begin{proof}
By Remark~\ref{rectconvspec} and Proposition~\ref{rot}, one can see that
\[
[p \recsum q] (x,y)= y^{m-d}[p \recsum q] (xy,1)= y^{m-d} [p \boxplus_{d}^{m-d} q] (xy).
\]
It is sufficient to check the equality for $p(x,y)= y^{m-d} (xy)^{d-i}$ and  $q(x,y)= y^{m-d} (xy)^{d-j}$  due to the bilinearity. By definition of $\recsum $,
we have:
\[
y^{m-d} (xy)^{d-i} \recsum  y^{m-d} (xy)^{d-j} =  y^{m-d} (xy)^{d-i-j}  \frac{(d-i)!(d-j)! }{d! (d-i-j)!} \frac{(m-i)!(m-j)! }{m! (m-i-j)!}.
\]
On the other hand, a simple computation shows that  $(\deriv{x}\deriv{y})^{k}q(0,1) \neq 0 \iff k=d-j$ and therefore
\[
 \frac{(m-d)!}{d! m!}\sum_{k=0}^{d} (\deriv{x}\deriv{y})^{d-k}p(x,y)  (\deriv{x}\deriv{y})^{k}q(0,1) =  \frac{(m-d)!}{d! m!} (\deriv{x}\deriv{y})^{j}p(x,y)  (\deriv{x}\deriv{y})^{d-j}q(0,1).
\]
Here we get
\begin{align*}
 (\deriv{x}\deriv{y})^{j}p(x,y) &= \frac{(m-i)!}{(m-i-j)!} y^{m-i-j}  \frac{(d-i)!}{(d-i-j)!} x^{d-i-j} \\  
 \text{and}\\
  (\deriv{x}\deriv{y})^{d-j}q(0,1) &= (d-j)! \frac{(m-j)!}{(m-d)!}.
 \end{align*}
 We conclude by gathering terms. 
 \end{proof}
%%%%%%%%%%%

\begin{lemma}\label{diffop:sum}
Let $\mathcal{L} = \sum_i c_i (\deriv{x} \deriv{y})^i$ be a linear differential 
operator.
Then we can pull out the differential operator from the convolution operation:
\[
\mathcal{L}\left\{ p \recsum q \right\} = \mathcal{L}\{p\} \recsum q = p 
\recsum \mathcal{L}\{q\}.
\]
\end{lemma}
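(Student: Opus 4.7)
The plan is to reduce to the case of a single monomial $\mathcal{L} = (\deriv{x}\deriv{y})^j$ by linearity, then apply the explicit formula of Lemma~\ref{bivsum}.

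By linearity of both sides in $\mathcal{L}$, it suffices to prove, for each $j \geq 0$, that
\[
(\deriv{x}\deriv{y})^j\{p \recsum q\} = \{(\deriv{x}\deriv{y})^j p\} \recsum q.
\]
I expand the left-hand side using Lemma~\ref{bivsum}. Since $(\deriv{x}\deriv{y})^j$ acts only on the free variables $(x,y)$ --- not on the scalar constants $(\deriv{x}\deriv{y})^k q(0,1)$ --- it combines with the $(\deriv{x}\deriv{y})^{d-k}$ already sitting on $p$ to give
\[
(\deriv{x}\deriv{y})^j[p \recsum q](x,y) = \frac{(m-d)!}{d!\,m!}\sum_{k=0}^{d} (\deriv{x}\deriv{y})^{d-k}\bigl\{(\deriv{x}\deriv{y})^j p\bigr\}(x,y)\cdot (\deriv{x}\deriv{y})^k q(0,1),
\]
which is precisely the formula of Lemma~\ref{bivsum} for $\{(\deriv{x}\deriv{y})^j p\} \recsum q$.

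For the symmetric equality $p \recsum \mathcal{L}\{q\} = \mathcal{L}\{p \recsum q\}$, I rely on symmetry of $\recsum$ in its two arguments. From Definition~\ref{rectconvdefn}, the substitution $\hat Q \mapsto \hat Q^T$, $\hat R \mapsto \hat R^T$ preserves the Haar measure, and the integrand depends only on the singular values of $A + \hat Q B \hat R$, which are invariant under left and right orthogonal multiplications. This yields $p \recsum q = q \recsum p$, and combined with the first equality,
\[
p \recsum \{(\deriv{x}\deriv{y})^j q\} = \{(\deriv{x}\deriv{y})^j q\} \recsum p = (\deriv{x}\deriv{y})^j\{q \recsum p\} = (\deriv{x}\deriv{y})^j\{p \recsum q\}.
\]

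The one point requiring care is that $(\deriv{x}\deriv{y})^j p$ has underlying univariate degree $d-j$ rather than $d$, so Lemma~\ref{bivsum} is being invoked on a polynomial of smaller degree than its stated hypothesis. This causes no trouble: the formula is established in Lemma~\ref{bivsum} by checking it on the basis $\{y^{m-d}(xy)^{d-i}\}_{i=0}^d$ and extending by bilinearity, and $(\deriv{x}\deriv{y})^j$ maps this basis back into itself up to scalars, so the formula applies verbatim and the sum truncates automatically in the correct range.
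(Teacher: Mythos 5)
Your proof is correct and follows essentially the same route as the paper, which simply observes via the explicit formula of Lemma~\ref{bivsum} that $\recsum$ commutes with $\deriv{x}\deriv{y}$ and concludes by linearity; your treatment of the degree drop and of the second equality just makes that one-line argument explicit. The only small remark is that the commutativity you invoke is applied to $p \recsum \{(\deriv{x}\deriv{y})^j q\}$, where the second argument is no longer a characteristic polynomial of an $m\times d$ matrix, so it is cleaner to deduce commutativity from the symmetry in $i,j$ of the coefficient formula on the basis $y^{m-i}x^{d-i}$ (equivalently the symmetric binomial formula) rather than from the matrix integral.
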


So if $\mathcal{P}$ and $\mathcal{Q}$ are linear differential operators such 
that
\[
p(x, y) = \mathcal{P}\{ y^m x^d \}
\AND	
q(x, y) = \mathcal{Q}\{ y^m x^d \}
\]
then
\begin{align*} 
[p \recsum q]
&= [\mathcal{P} \{ y^m x^d \} \recsum \mathcal{Q} \{ y^m x^d \}]
\\ &= \mathcal{P} \{ \mathcal{Q} [y^m x^d] \recsum y^m x^d  \}
\\ &= \mathcal{P} \mathcal{Q} \{  y^m x^d \}.
\end{align*}
\begin{proof}
Using Lemma \ref{bivsum}, we see that the operation $\recsum$ commutes with $\deriv{x} \deriv{y}$ and the result follows.
\end{proof}

\section{Definition of the rectangular finite $R$-transform for bivariate polynomials}\label{defnRfinite}
\subsection{From polynomials to random variables: the rectangular $T$-Transform}
We recall that the symmetrization operator $\Strans{p}$  denotes for a 
polynomial $p$  with all real nonnegative roots the polynomial $p(x^2)$. We 
will consider a polynomial and the uniform measure $\mu_p$ over its roots as 
giving the same information. 
 If $p:=\prod_i(x-\lambda_i(p))$, and $\lambda_i(p)\geq 0$ for all $i$, we associate to $p$ the measure
\[ \mu_{\Strans{p}}=\frac{1}{2d}\sum_{i=1}^{d}( \delta_{\sqrt{\lambda_i(p)}} + \delta_{-\sqrt{\lambda_i(p)}}).
\] 

Consider a symmetric discrete finite probability measure $\mu_A$  such that $\mu_A= \mu_{\Strans{p_A}}$, for a polynomial $p_A $ with nonnegative roots. If such a polynomial  $p_{A}$ of degree $d$ exists, then it is the unique polynomial of degree $d$ such that this holds. Now fix $\lambda \in ]0,1]$. For each $\lambda$, and each $d$ such that $ d=m\lambda$ (for an integer $m \geq d$), we can uniquely associate a random variable to the measure $\mu_A$. We call it the $T^{(m,d)}$-transform or also the $T^{(d,\lambda)}$-transform.  Alternatively, for a given polynomial of degree $d$ with nonnegative real roots $p_A$, we associate uniquely a $T^{(m,d)}$-transform. In the following, the expectation symbol  will stand for a uniform mean over all the values taken by the random variable ($T$-transform).
\begin{defn}\label{defT}
For $d$, $m$, $\lambda$, $\mu_A$ and $p_A$ as above, we define the random 
variable $T^{(m,d)}_{\Strans{p_A}}$ or $T^{(m,d)}_{\mu_A}$ , associated to the measure $\mu_A$ and vice versa by
\[
  \mathbb{E}\big[ e^{-T^{(m,d)}_{\mu_A} \partial_x \partial_y} \big] \{y^{m}x^{d}\}=\frac{1}{d} \sum_{i=1}^d e^{-t_i^{(m,d)}(A)[\partial_x \partial_y]}\{y^{m}x^{d}\}:=y^{m-d}p_{A}(xy)
\]
where $T^{(m,d)}_{\Strans{p_A}}$, the rectangular $T$-transform associated to $p_A$,  is a random variable taking with same probability any value from the multiset $({t_1^{(m,d)}(A),....,t_d^{(m,d)}(A) )}$.
\end{defn}

 \begin{claim}\label{unic}
 $T^{(m,d)}_{\Strans{\mu_A}}$ is well-defined and uniquely determines $\mu_A$ (or $p_A$).
 \end{claim}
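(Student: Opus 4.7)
The plan is to expand both sides of the defining identity in the monomial basis $\{y^{m-k}x^{d-k}\}_{k=0}^{d}$ and match coefficients; this reduces the claim to classical symmetric function theory.

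First, I would compute
\[
(\partial_x \partial_y)^k \{y^m x^d\} = \frac{m!\, d!}{(m-k)!(d-k)!}\, y^{m-k} x^{d-k}
\]
for $0 \leq k \leq d$, with the operator annihilating $y^m x^d$ for $k > d$. Exponentiating and averaging, the coefficient of $y^{m-k} x^{d-k}$ in $\frac{1}{d}\sum_{i=1}^d e^{-t_i \partial_x\partial_y}\{y^m x^d\}$ is
\[
\frac{(-1)^k}{k!}\cdot\frac{m!\,d!}{(m-k)!(d-k)!}\cdot \frac{1}{d}\sum_{i=1}^d t_i^{k}.
\]
On the other hand, writing $p_A(x) = \sum_{k=0}^{d} (-1)^k a_k x^{d-k}$ with $a_0 = 1$, the right-hand side $y^{m-d}p_A(xy)$ expands as $\sum_{k=0}^{d} (-1)^k a_k y^{m-k} x^{d-k}$. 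Matching coefficients yields, for each $k = 0, 1, \dots, d$,
\[
P_k(t) \;:=\; \sum_{i=1}^d t_i^k \;=\; \frac{d\cdot a_k \cdot k!\,(m-k)!\,(d-k)!}{m!\,d!}.
\]
The case $k=0$ is automatic ($d=d$), and the remaining $d$ equations prescribe the first $d$ power sums of the multiset $\{t_1,\dots,t_d\}$ in terms of the coefficients of $p_A$.

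To conclude, I would invoke Newton's identities: the power sums $P_1(t), \dots, P_d(t)$ uniquely determine the elementary symmetric polynomials $e_1(t), \dots, e_d(t)$, and hence the polynomial $\prod_{i=1}^d (x - t_i) = \sum_{k=0}^d (-1)^k e_k(t)\, x^{d-k}$. This polynomial (and therefore the multiset of its roots, counted with multiplicity) exists and is uniquely determined by $p_A$, proving well-definedness of $T^{(m,d)}_{\Strans{p_A}}$. Conversely, the multiset $\{t_i\}$ determines each $P_k(t)$, and hence each $a_k$ via the formula above, so $p_A$ is recovered from $T^{(m,d)}_{\Strans{p_A}}$, giving the reverse direction.

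The one subtle point to address is that this construction produces, a priori, a multiset of complex numbers; the main obstacle is therefore verifying that the associated polynomial $T_{p_A}(x) := \prod_i (x-t_i)$ is real-rooted so that $T^{(m,d)}_{\Strans{p_A}}$ is a genuine real-valued random variable. Since $p_A$ has nonnegative real roots, this amounts to a classical Laguerre--Pólya style statement about the transform $a_k \mapsto \frac{d\, a_k k!(m-k)!(d-k)!}{m!\,d!}$ preserving real-rootedness of the normalized power-sum polynomial; I would either cite the corresponding hyperbolicity-preserving property or argue it directly via the fact that differential operators of the form $\sum c_k (\partial_x\partial_y)^k$ acting on $y^m x^d$ produce, up to the rescaling above, a realrooted bivariate polynomial exactly when $p_A$ is realrooted with nonnegative roots.
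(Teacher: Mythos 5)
Your main argument --- computing $(\partial_x\partial_y)^k\{y^mx^d\}$, matching coefficients of $y^{m-k}x^{d-k}$ on both sides, reading off the first $d$ power sums of the multiset $\{t_1,\dots,t_d\}$ as explicit real multiples of the coefficients of $p_A$, and invoking Newton's identities for existence and uniqueness in both directions --- is exactly the paper's proof, and it is already complete for the claim as stated.

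The final paragraph, however, is a wrong turn. The definition does not require $T^{(m,d)}_{\Strans{p_A}}$ to be a real-valued random variable: it is simply the uniform distribution on a multiset of \emph{a priori complex} numbers (the paper's remark immediately after the claim says precisely this), and the only reality statement one could want --- that the moments $\mathbb{E}\big[(T^{(m,d)}_{\Strans{p_A}})^k\big]$ are real --- is immediate from your own formula $P_k = d\,k!\,(m-k)!\,(d-k)!\,a_k/(m!\,d!)$, since the $a_k$ are real. More importantly, the Laguerre--P\'olya-type statement you propose to cite or prove is false: take $m=d=2$ and $p_A(x)=x(x-1)$, so $a_1=1$, $a_2=0$. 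Then $P_1=\tfrac12$, $P_2=0$, hence $e_1=\tfrac12$, $e_2=\tfrac18$, and $\prod_i(x-t_i)=x^2-\tfrac12 x+\tfrac18$ has the non-real roots $t_{1,2}=\tfrac{1\pm i}{4}$, even though $p_A$ has nonnegative real roots. So no hyperbolicity-preservation argument can close the ``gap'' you flag; fortunately none is needed, and that paragraph should simply be deleted (or replaced by the observation that the $t_i$ occur in conjugate pairs, so all moments are real).
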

 \begin{proof}
 Write:
 \[
 y^{m-d}p_{A}(xy)= y^{m-d}\sum_{i=1}^d (xy)^{d-i}(-1)^i p_i.
 \]
 We get:
 \begin{align}
  \mathbb{E}\big[ e^{- T^{(m,d)}_{\mu_A}\partial_x \partial_y} \big] \{y^{m}x^{d}\}&= \sum_{i=0}^d \frac{\mathbb{E}\big[(-T^{(m,d)}_{\mu_A})^i \big]}{i!}  (\partial_x \partial_y)^{i} \{y^{m}x^{d}  \}\\
  &= \sum_{i=0}^d \frac{\mathbb{E}\big[(-T^{(m,d)}_{\mu_A})^i \big]}{i!} \frac{m!}{(m-i)!}\frac{d!}{(d-i)!}y^{m-i}x^{d-i}\\
  &= y^{m-d}\sum_{i=0}^d (xy)^{d-i} (-1)^i\frac{\mathbb{E}\big[(T^{(m,d)}_{\mu_A})^i \big]}{i!} \frac{m!}{(m-i)!}\frac{d!}{(d-i)!}.
 \end{align}
 So we have to prove the existence and the unicity of $T^{(m,d)}_{\mu_A}$ such that for all $i \in \{1,\dots,d\}$:
 \[
 \mathbb{E}\big[(T^{(m,d)}_{\mu_A})^i \big] =  \frac{i!(m-i)!}{m!}\frac{(d-i)!}{d!} p_i,
 \]
 or
  \[
 \sum_{j=1}^{d}[t_j^{(m,d)}(A)]^i=  d\frac{i!(m-i)!}{m!}\frac{(d-i)!}{d!} p_i.
 \]
 It amounts to finding $d$ complex numbers for which  the $d$ first power sums are given. The existence and the unicity follows by using Newton's identities. 
 \end{proof}
  \begin{rmk}
 Notice that the numbers $t_j^{(m,d)}(A)$ are \textit{a priori} complex. But the expectations $\mathbb{E}\big[(-T^{(m,d)}_{\mu_A})^i]$ are real. 
 \end{rmk}
Now, let's see why this representation turns out to be useful in the framework of finite free probability.

\begin{prop}\label{corT}

For any multisets $T^{(m,d)}_{\Strans{p}}$ and  $T^{(m,d)}_{\Strans{q}}$associated to polynomials $p$ and $q$ with nonnegative roots and degree $d$, considered by assumption independent (we choose the joint distribution to be the tensor product of distributions) ,
\[
T^{(m,d)}_{\Strans{[p \boxplus_{d,\lambda} q]}} =_{law} T^{(m,d)}_{\Strans{p}} +T^{(m,d)}_{\Strans{q}},
\]
that is, the random variables have the same distribution.
\end{prop}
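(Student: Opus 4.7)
The strategy is to reinterpret the $T$-transform as producing a linear differential operator of the form $\sum c_i (\partial_x\partial_y)^i$, and then to use Lemma~\ref{diffop:sum} to convert the convolution identity into an operator identity. Independence of $T_p$ and $T_q$ then turns the exponential of a sum into a product of exponentials, which matches exactly the composition of operators produced by Lemma~\ref{diffop:sum}.

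More precisely, the plan is as follows. First I would set
\[
\mathcal{P} := \mathbb{E}\bigl[ e^{-T^{(m,d)}_{\Strans{p}} \partial_x \partial_y}\bigr], \qquad \mathcal{Q} := \mathbb{E}\bigl[ e^{-T^{(m,d)}_{\Strans{q}} \partial_x \partial_y}\bigr],
\]
where, because $T^{(m,d)}_{\Strans{p}}$ takes only finitely many values and $(\partial_x\partial_y)^k$ annihilates $y^m x^d$ for $k>d$, both are genuinely finite linear combinations of powers of $\partial_x\partial_y$ when applied to $y^m x^d$. By the defining equation of the $T$-transform, $\mathcal{P}\{y^m x^d\} = p(x,y)$ and $\mathcal{Q}\{y^m x^d\} = q(x,y)$.

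Next, I invoke Lemma~\ref{diffop:sum}: since both $\mathcal{P}$ and $\mathcal{Q}$ are polynomials in $\partial_x\partial_y$, they commute with $\recsum$ and can be pulled outside of it. Combined with the trivial identity $y^m x^d \recsum y^m x^d = y^m x^d$ (the case $i=j=0$ in the product formula from Lemma~\ref{bivsum}, up to a constant that turns out to be $1$ under the normalization used), this gives
\[
p \recsum q \;=\; \mathcal{P}\{y^m x^d\} \recsum \mathcal{Q}\{y^m x^d\} \;=\; \mathcal{P}\,\mathcal{Q}\,\{y^m x^d\}.
\]
On the probabilistic side, since $T^{(m,d)}_{\Strans{p}}$ and $T^{(m,d)}_{\Strans{q}}$ are independent and $\partial_x\partial_y$ commutes with itself,
\[
\mathbb{E}\bigl[e^{-(T^{(m,d)}_{\Strans{p}} + T^{(m,d)}_{\Strans{q}})\partial_x\partial_y}\bigr] \;=\; \mathcal{P}\,\mathcal{Q},
\]
so applying both sides to $y^m x^d$ yields exactly $(p \recsum q)(x,y)$. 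Comparing with the defining identity for $T^{(m,d)}_{\Strans{[p\boxplus_{d,\lambda}q]}}$ and using the uniqueness statement from the previous claim (which shows that the $T$-transform is determined by the polynomial it represents, via a Newton-identity argument on its $d$ first moments), we conclude that $T^{(m,d)}_{\Strans{[p\boxplus_{d,\lambda}q]}}$ and $T^{(m,d)}_{\Strans{p}} + T^{(m,d)}_{\Strans{q}}$ have the same first $d$ moments, hence the same law as multisets of $d$ values.

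The only delicate point is justifying the pull-out $y^m x^d \recsum y^m x^d = y^m x^d$ (up to the correct normalization); once that base case and Lemma~\ref{diffop:sum} are invoked, the rest is formal. I would verify the base case directly by plugging $i=j=0$ into the bilinearity calculation at the end of the proof of Lemma~\ref{bivsum}, where the coefficient $\frac{(d-i)!(d-j)!}{d!(d-i-j)!}\frac{(m-i)!(m-j)!}{m!(m-i-j)!}$ equals $1$ at $i=j=0$.
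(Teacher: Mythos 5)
Your argument is correct, but it is organized around different tools than the paper's own proof. The paper proves Proposition \ref{corT} by direct computation: it expands $y^{m-d}[p\boxplus_{d,\lambda}q](xy)$ via the explicit binomial formula from \cite{MG1}, rewrites the coefficients $p_i, q_j$ as moments of $T^{(m,d)}_{\Strans{p}}$ and $T^{(m,d)}_{\Strans{q}}$, uses independence to convert products of moments into mixed moments, and then recognizes the binomial theorem to reassemble $\mathbb{E}\big[e^{-(T^{(m,d)}_{\Strans{p}}+T^{(m,d)}_{\Strans{q}})\partial_x\partial_y}\big]\{y^mx^d\}$. You instead reuse the operator machinery: setting $\mathcal{P}=\mathbb{E}\big[e^{-T^{(m,d)}_{\Strans{p}}\partial_x\partial_y}\big]$ and $\mathcal{Q}=\mathbb{E}\big[e^{-T^{(m,d)}_{\Strans{q}}\partial_x\partial_y}\big]$, invoking Lemma \ref{diffop:sum} together with the base case $y^mx^d \recsum y^mx^d = y^mx^d$ (which you correctly verify from the $i=j=0$ coefficient in Lemma \ref{bivsum}) to obtain $[p\recsum q]=\mathcal{P}\mathcal{Q}\{y^mx^d\}$ --- an identity the paper displays right after Lemma \ref{diffop:sum} but does not actually use in this proof --- and then applying independence once, at the level of moment generating series, to identify $\mathcal{P}\mathcal{Q}$ with $\mathbb{E}\big[e^{-(T^{(m,d)}_{\Strans{p}}+T^{(m,d)}_{\Strans{q}})\partial_x\partial_y}\big]$ on $y^mx^d$. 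The two routes rest on the same combinatorial content, since Lemma \ref{bivsum} was itself derived from the binomial formula of \cite{MG1}, but yours is shorter and more structural, while the paper's makes the role of independence visible coefficient by coefficient; your truncation remark (powers $(\partial_x\partial_y)^k$ with $k>d$ annihilate $y^mx^d$) correctly disposes of the only formal issue in multiplying the exponential series. One caveat applies equally to both proofs: what is actually established is equality of the first $d$ moments of $T^{(m,d)}_{\Strans{[p\boxplus_{d,\lambda}q]}}$ and $T^{(m,d)}_{\Strans{p}}+T^{(m,d)}_{\Strans{q}}$, since the generating identity is only tested against $y^mx^d$; as the sum is supported on up to $d^2$ values, your closing phrase ``the same law as multisets of $d$ values'' --- like the paper's ``$=_{law}$'' --- should be read as this matching of the first $d$ moments, which is all that the later arguments (carried out mod $s^{d+1}$) require.
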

\begin{proof}
We have:
\begin{align}
& \big[\mathbb{E}\big[ e^{-T^{(m,d)}_{\Strans{[p \boxplus_{d,\lambda} q]}} \partial_x \partial_y} \big] \{y^{m}x^{d}\} = y^{m-d}[p \boxplus_{d,\lambda} q] (xy)\\
&=y^{m-d}\sum_{k=0}^d (xy)^{d-k}(-1)^k\sum_{i+j=k} p_i q_j \frac{(d-i)!(d-j)!}{d!(d-i-j)!}\frac{(m-i)!(m-j)!}{m!(m-i-j)!}\\
&=  y^{m-d}\sum_{k=0}^d (xy)^{d-k}(-1)^k\sum_{i+j=k} \frac{\mathbb{E}\big[{T^{(m,d)}_{\Strans{p}}}^i \big] m!d! }{i!(m-i)!(d-i)!} \frac{\mathbb{E}\big[{T^{(m,d)}_{\Strans{q}}}^j \big] m!d! }{j!(m-j)!(d-j)!}     \frac{(d-i)!(d-j)!}{d!(d-i-j)!}\frac{(m-i)!(m-j)!}{m!(m-i-j)!}\\
&=  y^{m-d}\sum_{k=0}^d (xy)^{d-k}(-1)^k\sum_{i+j=k} \frac{\mathbb{E}\big[{T^{(m,d)}_{\Strans{p}}}^i \big] \mathbb{E}\big[{T^{(m,d)}_{\Strans{q}}} \big]^j }{i! j!} \frac{m! d!} {(d-i-j)!(m-i-j)!}\\
&=y^{m-d}\sum_{k=0}^d (xy)^{d-k}(-1)^k\sum_{i+j=k}\frac{ \mathbb{E}\big[{(T^{(m,d)}_{\Strans{p}}})^i({T^{(m,d)}_{\Strans{q}}})^j \big] \binom{k}{i} }{k!}\frac{m! d!} {(m-k)!(d-k)!} ~(\textit{by independence})\\
&= \Strans \big[ \sum_{k=0}^d \sum_{i+j=k}\frac{ \mathbb{E}\big[(-T^{(m,d)}_{\Strans{p}})^i(-T_{\Strans{q}})^j \big] \binom{k}{i} }{k!} (\partial_x \partial_y)^{k} \{y^{m}x^{d}\}\big]\\
&= \Strans \big[\sum_{k=0}^d \frac{\mathbb{E}\big[(-1)^k(T^{(m,d)}_{\Strans{p}}+T^{(m,d)}_{\Strans{q}})^k\big] (\partial_x \partial_y)^{k}}{k!} \{y^{m}x^{d}\}\big]\\
&=\mathbb{E}\big[ e^{-(T^{(m,d)}_{\Strans{p}} + T^{(m,d)}_{\Strans{q}}) \partial_x \partial_y} \big] \{y^{m}x^{d}\}.
\end{align}
By Definition~\ref{defT} and unicity from Claim~\ref{unic}, we get:
\[
T^{(m,d)}_{\Strans{[p \boxplus_{d,\lambda} q]}} =_{law} T^{(m,d)}_{\Strans{p}} +T^{(m,d)}_{\Strans{q}}.
\]
\end{proof}
The $T$-transform therefore linearizes the rectangular convolution. We will use this property to define a polynomial of degree $d$ that also shares this linearization property (the rectangular finite $R$-transform.

\subsection{The $R$-transform as a polynomial in partial derivatives}
We start by the definition of the ($m$,$d$)-rectangular finite $R$ transform of a measure $\mu_{\Strans{p_A}}$. First let's introduce a notation.
\begin{defn}[Modulo truncation]
For two formal power series $f(s)=\sum_if_is^i$ and $g(s)=\sum_ig_is^i$ and an integer $d$ we write
\[
f(s)\equiv g(s) \mod[s^{d}]
\]
if
\[
\sum_{i=0}^{d-1}f_is^i= \sum_{i=0}^{d-1}g_is^i.
\]
We will also denote by $f(s)\mod[s^d]$ the polynomial $\sum_{i=0}^{d-1}f_is^i$.
\end{defn}
\begin{rmk} \label{compositionmod}
It was proven in \cite{M} that for any power series $h$, if $f(s)= g(s) \mod[s^{d}]$ then
\[
h[f(s)]\equiv h[g(s)] \mod[s^{d}],
\]
and we also have
\begin{align}
\frac{df}{ds}(s)& \equiv \frac{dg}{ds}(s) \mod[s^{d}] & &  \text{and}&  & sf(s) \equiv sg(s) \mod[s^{d+1}].
\end{align}
\end{rmk}

 \begin{defn}[Rectangular finite $R$-transform] \label{Rtransformcor} We define  $\mathcal{R}^{d,\lambda}_{\Strans{p_A}}(s)$ as the unique polynomial of degree $d$ verifying
 \[
 \mathcal{R}^{d,\lambda}_{\Strans{p_A}}(s) \equiv \frac{-1}{d}s \frac{d}{ds}\log \big( \mathbb{E}\big[ e^{-T^{(m,d)}_{\Strans{p_A}}smd}\big]  \big) \mod[s^{d+1}].
  \]
 \end{defn}
 \begin{rmk}
It will become clear later on why we choose such constants in the definition (it is related to convergence). Notice that  $\mathcal{R}^{d,\lambda}_{\Strans{p_A}}(s)$ has zero constant term so that   $\frac{\mathcal{R}^{d,\lambda}_{\Strans{p_A}}(s)}{s}$ is a polynomial of degree $d-1$. 

\end{rmk}

We can deduce the fundamental additivity property for the rectangular finite $R$-transform:
\begin{thm}[Finite additivity]\label{Rtransform:additivity}
For two polynomials $p$ and $q$ of degree $d$ with nonnegative real roots, we have
\[
\mathcal{R}^{d,\lambda}_{\Strans{[p \recsum q]}}(s) = \mathcal{R}^{d,\lambda}_{\Strans{p}}(s) +\mathcal{R}^{d,\lambda}_{\Strans{q}}(s).
\]
It is the direct analogue of the free probability additivity property that defines the free $R$-rectangular transform:
\begin{align*}
 \Rtransform{\mu_{\Strans{p}} \recsumfree \mu_{\Strans{q}} }(s) &=  \Rtransform{\mu_{\Strans{p}}} (s) + \Rtransform{\mu_{\Strans{q}}} (s) \\
 &=  \Rtransform{\Strans{p}} (s) + \Rtransform{\Strans{q}} (s).
\end{align*}
\end{thm}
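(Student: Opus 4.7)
The plan is to prove additivity by reducing it, through the definition of the finite $R$-transform, to additivity of moment generating functions under the correspondence between polynomials and $T$-transforms established in Proposition~\ref{corT}. The key observation is that the definition
\[
\mathcal{R}^{d,\lambda}_{\Strans{p_A}}(s) \equiv \frac{-1}{d}\,s\,\frac{d}{ds}\log\!\bigl(\mathbb{E}[e^{-T^{(m,d)}_{\Strans{p_A}} s m d}]\bigr) \mod[s^{d+1}]
\]
is manifestly additive in the law of $T^{(m,d)}_{\Strans{p_A}}$ when applied to sums of independent variables, because the operations $\log$, $\frac{d}{ds}$, multiplication by $s$, and truncation modulo $s^{d+1}$ are all linear on the relevant spaces of formal power series.

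First, set $P(s):=\mathbb{E}[e^{-T^{(m,d)}_{\Strans{p}} s m d}]$, $Q(s):=\mathbb{E}[e^{-T^{(m,d)}_{\Strans{q}} s m d}]$, and $R(s):=\mathbb{E}[e^{-T^{(m,d)}_{\Strans{[p \recsum q]}} s m d}]$; each has constant term $1$, so each lies in the group of invertible formal power series on which $\log$ is well-defined. By Proposition~\ref{corT}, $T^{(m,d)}_{\Strans{[p \recsum q]}}$ has the same distribution as $T^{(m,d)}_{\Strans{p}} + T^{(m,d)}_{\Strans{q}}$, where the two summands are taken independent (their joint distribution is by definition the tensor product). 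Independence then factorizes the exponential:
\[
R(s) = \mathbb{E}\bigl[e^{-T^{(m,d)}_{\Strans{p}} s m d}\cdot e^{-T^{(m,d)}_{\Strans{q}} s m d}\bigr] = P(s)\,Q(s).
\]

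Next, apply $\log$ to convert the product into a sum, $\log R(s) = \log P(s) + \log Q(s)$, then differentiate and multiply by $-s/d$ to obtain the equality of formal power series
\[
\frac{-s}{d}\frac{d}{ds}\log R(s) = \frac{-s}{d}\frac{d}{ds}\log P(s) + \frac{-s}{d}\frac{d}{ds}\log Q(s).
\]
Truncating both sides modulo $s^{d+1}$ and invoking the definition of the finite $R$-transform on each side (together with the uniqueness of the degree-$d$ polynomial representative, which makes the truncation well-defined) gives exactly the stated identity $\mathcal{R}^{d,\lambda}_{\Strans{[p \recsum q]}}(s) = \mathcal{R}^{d,\lambda}_{\Strans{p}}(s) + \mathcal{R}^{d,\lambda}_{\Strans{q}}(s)$.

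There is essentially no hard obstacle once Proposition~\ref{corT} is in hand: the content lies entirely in that earlier linearization. The only points that require a word of care are (i) checking that $P$, $Q$, $R$ have constant term $1$ so the formal logarithms make sense, and (ii) noting that truncation mod $s^{d+1}$ is a linear map, so that the sum of two truncations equals the truncation of the sum. Both are immediate from Remark~\ref{compositionmod}.
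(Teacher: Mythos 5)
Your proposal is correct and takes essentially the same route as the paper: invoke Proposition~\ref{corT} to replace $T^{(m,d)}_{\Strans{[p \recsum q]}}$ by the independent sum $T^{(m,d)}_{\Strans{p}}+T^{(m,d)}_{\Strans{q}}$, factor the expectation of the exponential, take logarithms, apply $\frac{-s}{d}\frac{d}{ds}$, and truncate modulo $s^{d+1}$. The only difference is cosmetic: you spell out the independence factorization $R(s)=P(s)Q(s)$ and the constant-term-$1$ check for the formal logarithm, which the paper leaves implicit.
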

\begin{proof}
By definition of the rectangular finite
$R$-transform, we get
\begin{align*}
\mathcal{R}^{d,\lambda}_{\Strans{[p \recsum q]}}(s) & \equiv \frac{-s}{d}\frac{d}{ds} \Big(\log \mathbb{E} \big[e^{-T^{(m,d)}_{\Strans{[p\recsum q]}} smd}\big] \Big)  \mod[s^{d+1}]\\
& \equiv \frac{-s}{d}\frac{d}{ds} \Big( \log \mathbb{E} \big[ e^{(-T^{(m,d)}_{\Strans{p}} - T^{(m,d)}_{\Strans{q}})smd}\big]  \Big)  \mod[s^{d+1}]  \text{   using Corollary \ref{corT}}\\
&\equiv \frac{-s}{d}\frac{d}{ds} \Big( \log \mathbb{E} \big[ e^{-T^{(m,d)}_{\Strans{p}} smd}\big]  + \log \mathbb{E} \big[ e^{-T^{(m,d)}_{\Strans{q}} smd}\big]\Big)  \mod[s^{d+1}]  \\
&= \mathcal{R}^{d,\lambda}_{\Strans{p}}(s) +\mathcal{R}^{d,\lambda}_{\Strans{q}}(s)  \mod[s^{d+1}] \\
&= \mathcal{R}^{d,\lambda}_{\Strans{p}}(s) +\mathcal{R}^{d,\lambda}_{\Strans{q}}(s)  
\end{align*}
as $\mathcal{R}^{d,\lambda}_{\Strans{q}}(s) $, $\mathcal{R}^{d,\lambda}_{\Strans{p}}(s)$ and $\mathcal{R}^{d,\lambda}_{\Strans{[p \recsum q]}}(s) $ are all polynomials of degree at most $d$. 
\end{proof}
We can also deduce the following meaningful representation:
\begin{prop}[Inversion formula]\label{inversionf}
For any monic polynomial $p$ with nonnegative roots of degree $d$, 
\[
 y^{m-d}p(xy)= e^{- [\widehat{\mathcal{R}}^{d,\lambda}_{\Strans{p}}(\partial_x \partial_y) ]} \{y^{m}x^{d}\}
 \]
 where $\widehat{\mathcal{R}}^{d,\lambda}_{\Strans{p}}(s):= d\int_s \frac{ \mathcal{R}^{d,\lambda}_{\Strans{p}}(\frac{s}{md})}{(\frac{s}{md})}$ (the primitive which is zero at zero). It follows that $\mathcal{R}^{d,\lambda}_{\Strans{p}}$ characterizes uniquely $p$ in the sense that we can recover $p$ if we know $\mathcal{R}^{d,\lambda}_{\Strans{p}}$. It should be noted that as the transform is a function of the roots only, we need to require $p$ to be monic to be able to recover the polynomial.
\end{prop}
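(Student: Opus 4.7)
The key observation is that the claimed identity is equivalent to a formal power series identity in a single auxiliary variable $s$. Set $M(s) := \mathbb{E}\left[e^{-T^{(m,d)}_{\Strans{p}} s}\right]$; by the very definition of the rectangular $T$-transform we already have $M(\partial_x \partial_y)\{y^m x^d\} = y^{m-d} p(xy)$, so the proposition reduces to
\[
e^{-\widehat{\mathcal{R}}^{d,\lambda}_{\Strans{p}}(\partial_x \partial_y)}\{y^m x^d\} = M(\partial_x \partial_y)\{y^m x^d\}.
\]

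The first step I would take is to notice that $(\partial_x \partial_y)^k$ annihilates $y^m x^d$ whenever $k \geq d+1$, since the factor $x^d$ is differentiated out. Consequently, any two formal power series in $s$ that agree modulo $s^{d+1}$ yield identical operators after substituting $s \mapsto \partial_x \partial_y$ and applying the result to $y^m x^d$. It therefore suffices to establish the scalar identity
\[
e^{-\widehat{\mathcal{R}}^{d,\lambda}_{\Strans{p}}(s)} \equiv M(s) \mod [s^{d+1}].
\]

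Since $M(0) = 1$ and $\widehat{\mathcal{R}}^{d,\lambda}_{\Strans{p}}(0) = 0$, taking logarithms and then differentiating (invoking Remark~\ref{compositionmod}) converts this into
\[
-\frac{d}{ds}\widehat{\mathcal{R}}^{d,\lambda}_{\Strans{p}}(s) \equiv \frac{M'(s)}{M(s)} \mod [s^{d}],
\]
with both sides vanishing at $s = 0$, so integrating back recovers the mod-$s^{d+1}$ identity above. I then unfold the two definitions. The defining primitive gives $\widehat{\mathcal{R}}'(s) = d \cdot \mathcal{R}^{d,\lambda}_{\Strans{p}}(s/(md))/(s/(md))$, while Definition~\ref{Rtransformcor} reads $\mathcal{R}^{d,\lambda}_{\Strans{p}}(t) \equiv -\tfrac{t}{d}\tfrac{d}{dt}\log M(tmd) \mod [t^{d+1}]$. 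Setting $t = s/(md)$ and applying the chain rule, the several factors of $md$ telescope and leave precisely $-M'(s)/M(s)$ modulo $s^d$, which is what the previous reduction demands.

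I expect the main obstacle to be purely bookkeeping: keeping the three scalings $s$, $s/(md)$, and $smd$ straight through the logarithmic derivative and the integration back. Once the scalar identity holds, the uniqueness assertion is immediate---given $\mathcal{R}^{d,\lambda}_{\Strans{p}}$ we reconstruct $\widehat{\mathcal{R}}^{d,\lambda}_{\Strans{p}}$ as the unique zero-at-zero primitive of the prescribed function, and the inversion formula then recovers $y^{m-d} p(xy)$ and hence the monic polynomial $p$ itself (monicity is needed to pin down the leading coefficient, since the transform depends only on the multiset of roots).
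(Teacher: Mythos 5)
Your high-level plan is exactly the paper's argument (run in reverse): reduce to the scalar congruence $e^{-\widehat{\mathcal{R}}^{d,\lambda}_{\Strans{p}}(s)} \equiv \mathbb{E}\big[e^{-T^{(m,d)}_{\Strans{p}}s}\big] \mod[s^{d+1}]$ using that $(\partial_x\partial_y)^k$ kills $y^mx^d$ for $k\geq d+1$, then take logarithms, differentiate, match against Definition~\ref{Rtransformcor}, and integrate back with the constant pinned at $s=0$. The problem is that the decisive bookkeeping step is asserted, not carried out, and under the reading of $\widehat{\mathcal{R}}$ that you wrote down it is false. You take $\widehat{\mathcal{R}}'(s) = d\,\mathcal{R}^{d,\lambda}_{\Strans{p}}(s/(md))\big/(s/(md))$. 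Writing $M(s):=\mathbb{E}[e^{-T^{(m,d)}_{\Strans{p}}s}]$, Definition~\ref{Rtransformcor} gives $\mathcal{R}^{d,\lambda}_{\Strans{p}}(t)/t \equiv -\tfrac{1}{d}\tfrac{d}{dt}\log M(tmd) = -m\,M'(tmd)/M(tmd) \mod[t^{d}]$, so with your formula $-\widehat{\mathcal{R}}'(s) \equiv md\,M'(s)/M(s)$, not $M'(s)/M(s)$: the factors of $md$ do not telescope, a factor $md$ survives. Concretely, for $d=1$, $p(x)=x-c$: then $T^{(m,1)}_{\Strans{p}}=c/m$, $M(s)=e^{-cs/m}$, $\mathcal{R}^{1,\lambda}_{\Strans{p}}(s)=cs$, your $\widehat{\mathcal{R}}(s)=cs$, and $e^{-c\,\partial_x\partial_y}\{y^mx\} = xy^m - cm\,y^{m-1} \neq y^{m-1}p(xy) = xy^m - c\,y^{m-1}$.

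The statement only holds under the interpretation the paper's own proof uses implicitly: $\widehat{\mathcal{R}}^{d,\lambda}_{\Strans{p}}(s) = d\,F(s/(md))$, where $F$ is the zero-at-zero primitive of $u\mapsto \mathcal{R}^{d,\lambda}_{\Strans{p}}(u)/u$ --- i.e.\ one first integrates $\mathcal{R}(s)/s$ and only then substitutes $s\mapsto s/(md)$ (this is what happens in the paper when $Q_p(s)\equiv e^{-d\int_s \mathcal{R}(s)/s}$ is rescaled), equivalently $\widehat{\mathcal{R}}'(s) = \tfrac{1}{m}\,\mathcal{R}^{d,\lambda}_{\Strans{p}}(s/(md))\big/(s/(md))$. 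With that derivative the chain rule gives $-\widehat{\mathcal{R}}'(s) \equiv M'(s)/M(s) \mod[s^{d}]$ and your scheme closes; it also makes the proposition consistent with Lemma~\ref{laguerrecumulant}, where $\mathcal{R}(s)=m\sigma^2 s$ must correspond to the operator $e^{-\sigma^2\partial_x\partial_y}$, i.e.\ $\widehat{\mathcal{R}}(s)=\sigma^2 s$ and not $md\,\sigma^2 s$. So the gap is precisely the scaling you flagged as "purely bookkeeping": as written, the key identity fails by a factor $md$, and you need to rederive $\widehat{\mathcal{R}}'$ from the correct reading of the definition. A second, minor slip: in the differentiated congruence the two sides do not vanish at $s=0$ (both equal $\mathbb{E}[T^{(m,d)}_{\Strans{p}}]$ in general); what fixes the constant of integration is that the primitives $-\widehat{\mathcal{R}}(s)$ and $\log M(s)$ both vanish at $0$. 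The uniqueness/monicity remark at the end is fine.
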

\begin{proof}
Consider the polynomial $Q_p$ of degree $d$ such that
\begin{equation}\label{eqn:diffequation}
Q_p [\frac{{\partial_x\partial_y}}{md}]\{y^{m}x^{d}\} =  y^{m-d}p(xy)=   \mathbb{E}\big[ e^{-T^{(m,d)}_{\mu_{\Strans{p}}} \partial_x \partial_y} \big] \{y^{m}x^{d}\}.
\end{equation}
Its coefficients can be found explicitly as the reverse of the coefficients of $p$ renormalized. Furthermore it is clear that such a polynomial is defined uniquely. 
In particular, we have that $Q_p$ is the following truncated polynomial:
\begin{equation}\label{eqn:sequation}
Q_p[\frac{s}{md}] \equiv  \mathbb{E}\big[ e^{-T^{(m,d)}_{\mu_{\Strans{p}}}s} \big]  \mod[s^{d+1}]\\.
\end{equation}
Indeed, if a polynomial $Q$ verifies \eqref{eqn:sequation} (there is only one such possible polynomial of degree $d$), then it verifies \eqref{eqn:diffequation} by plugging $s=\partial_x\partial_y$ into  \eqref{eqn:sequation}. And we have necessarily $Q=Q_p$. We then get using Remark~\ref{compositionmod} and the fact that $\log(1+s)$ expands as a power series:
\begin{align*}
-\log [Q_p(s)] &\equiv  -\log \big[\mathbb{E}\big( e^{-T^{(m,d)}_{\mu_{\Strans{p}}}smd} \big) \big] \mod[s^{d+1}]\\
 - s \frac{d \log [Q_p(s)]}{ds}&\equiv -s \frac{d}{ds}\log \big[\mathbb{E}\big( e^{-T^{(m,d)}_{\mu_{\Strans{p}}}smd} \big) \big] \mod[s^{d+1}]\\
 - s \frac{d \log [Q_p(s)]}{ds}&\equiv d \mathcal{R}^{d,\lambda}_{\Strans{p}}(s) \mod[s^{d+1}]\\
\log [Q_p(s)] &\equiv- d \int_s \frac{\mathcal{R}^{d,\lambda}_{\Strans{p}}(s)}{s}\mod[s^{d+1}] \\ 
 Q_p(s)& \equiv e^{-d \int_s \frac{\mathcal{R}^{d,\lambda}_{\Strans{p}}(s)}{s}} \mod[s^{d+1}].
\end{align*}
We plug in $s=\partial_x\partial_y$ so that
\begin{align*}
  y^{m-d}p(xy)= Q_p(\frac{\partial_x\partial_y}{md}) \{y^mx^d \}&=  \Big(e^{- \widehat{\mathcal{R}}^{d,\lambda}_{\Strans{p}}(\partial_x\partial_y) } \mod[(\partial_x\partial_y)^{d+1}] \Big)\{y^mx^d \} \\
  &= e^{- \widehat{\mathcal{R}}^{d,\lambda}_{\Strans{p}}(\partial_x\partial_y) }\{y^mx^d \} 
\end{align*}
as $ h(\partial_x\partial_y)\{y^mx^d \} = \big[h(\partial_x\partial_y) \mod[(\partial_x\partial_y)^{d+1}] \big]\{y^mx^d \} $, because  $(\partial_x\partial_y)^{k}\{y^mx^d \}=0$ for $k \geq d+1$.
\end{proof}

\section{Adapting free probability notions to polynomials}\label{adapt}
\subsection{Redefining transforms for compact discrete symmetric measures}

\begin{defn}\label{Htransform:defn}
The $\lambda$-rectangular Cauchy transform of a polynomial $p$ with all roots nonnegative or equivalently of the symmetric polynomial $\Strans{p}$  is given by
\[
\Hcauchy{\Strans{p}}(x):= \mathcal{G}_{\mathbb{S}p}(x) \Big(  \lambda \mathcal{G}_{\mathbb{S}p}(x)+ (1-\lambda) {\mathcal{G}}_0(x)  \Big)  \textit{\ \ \ for \ } x> \sqrt{\maxroot{p}}.
\]
\end{defn}
Recall that ${\mathcal{G}}_0(x) = \frac{1}{x}$. 
It is easy to check that
\[
\Hcauchy{\Strans{p}}(x)=H^{\lambda}_{\mu_{\Strans{p}}} (\frac{1}{x}).
\]
This definition is a slightly modified version of the Benaych Georges' transform fit for polynomials. Our definition also incorporates the symmetrization. 
We can rewrite it more explicitly if $p$ is of degree $d$ as:
\[
\Hcauchy{\Strans{p}}(x)= \frac{x}{d}\frac{p'(x^2)}{p(x^2)} \Big( 
\lambda\frac{1}{d}\frac{x p'(x^2)}{p(x^2)}+ \frac{(1-\lambda)}{x}\Big).
\]

Note that we get the expansion around  $x= \infty$ :
\[
\Hcauchy{\Strans{p}}(x)= \sum_{i=1}^{\infty} h_i^{(p)} \frac{1}{x^{2i}}.
\]
where  $h_1^{(p)}=1$. Also, after the last root, the $H$-transform will be monotonous decreasing. It leads to the following.
\begin{lemma} \label{Hinverse:defn} $\Hcauchy{\Strans{p}}$ is a bijection from $[\sqrt{\maxroot{p}}, +\infty]$ to $ [0,+\infty]$. We denote the inverse by $\Jcauchy{\Strans{p}}$. In particular, for $x> 0$ 
\[
\Hcauchy{\Strans{p}} \circ \Jcauchy{\Strans{p}} [x]=x,
\]
and for  $ x>\sqrt{\maxroot{p}}$
\[
\Jcauchy{\Strans{p}} \circ \Hcauchy{\Strans{p}} [x]=x.
\]
\end{lemma}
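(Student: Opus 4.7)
The plan is to reduce everything to properties of the scalar function $g(x):=\mathcal{G}_{\mathbb{S}p}(x)$ on $(\sqrt{\maxroot{p}},+\infty)$ and then show that $\Hcauchy{\Strans{p}}=\lambda g^{2}+(1-\lambda)g/x$ inherits strict monotonicity and the correct boundary behavior, after which the bijection claim follows from the intermediate value theorem.

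First I would write
\[
g(x)=\frac{1}{2d}\sum_{i=1}^{d}\left(\frac{1}{x-\sqrt{\lambda_i(p)}}+\frac{1}{x+\sqrt{\lambda_i(p)}}\right)=\frac{x}{d}\frac{p'(x^{2})}{p(x^{2})},
\]
which is well-defined, smooth, and strictly positive for $x>\sqrt{\maxroot{p}}$ since every summand is positive there. Differentiating term-by-term gives $g'(x)<0$ on the same range (each summand differentiates into a strictly negative term), so $g$ is strictly decreasing. I would then compute the boundary values: as $x\to\sqrt{\maxroot{p}}^{+}$ the term corresponding to the largest $\lambda_i(p)$ forces $g(x)\to +\infty$, while as $x\to +\infty$ the standard expansion yields $g(x)=1/x+O(1/x^{3})\to 0$.

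Next I would pass these properties through the definition
\[
\Hcauchy{\Strans{p}}(x)=\lambda g(x)^{2}+(1-\lambda)\frac{g(x)}{x}.
\]
Both terms are nonnegative on $(\sqrt{\maxroot{p}},+\infty)$. The boundary values transfer immediately: at $x\to\sqrt{\maxroot{p}}^{+}$ we have $\Hcauchy{\Strans{p}}(x)\to+\infty$ (the $g^{2}$ term blows up, dominating even in the case $\lambda=0$-limit irrelevant here since $\lambda\in(0,1]$), and at $x\to+\infty$ we get $\Hcauchy{\Strans{p}}(x)=\lambda/x^{2}+(1-\lambda)/x^{2}+O(1/x^{4})\to 0$. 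For strict monotonicity, I differentiate
\[
\frac{d}{dx}\Hcauchy{\Strans{p}}(x)=2\lambda\, g(x)g'(x)+(1-\lambda)\frac{x g'(x)-g(x)}{x^{2}}.
\]
The first summand is strictly negative because $g>0$ and $g'<0$. The bracket in the second summand is the sum of a negative quantity $xg'(x)$ and $-g(x)<0$, hence negative, so the second summand is $\le 0$ (and $=0$ only when $\lambda=1$). Thus $\Hcauchy{\Strans{p}}$ is strictly decreasing.

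A continuous strictly decreasing function from $(\sqrt{\maxroot{p}},+\infty)$ to $(0,+\infty)$ that attains the limits $+\infty$ and $0$ at the respective endpoints is a bijection onto $(0,+\infty)$ by the intermediate value theorem; extending the endpoints by their limits produces the claimed bijection $[\sqrt{\maxroot{p}},+\infty]\to[0,+\infty]$, and we define $\Jcauchy{\Strans{p}}$ as its inverse. The two composition identities then follow tautologically. I do not anticipate a serious obstacle: the only subtlety is ensuring the second summand of the derivative has the right sign, which is handled above by noting that $xg'<0$ and $-g<0$ separately, so no delicate cancellation is involved.
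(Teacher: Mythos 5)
Your argument is correct and follows the same route the paper takes: the paper simply asserts that after the largest root the transform is monotone decreasing with the limits $+\infty$ at $\sqrt{\maxroot{p}}$ and $0$ at $+\infty$, and deduces the bijection. Your write-up fills in exactly that assertion (positivity and strict decrease of $\mathcal{G}_{\mathbb{S}p}$, the sign computation for the derivative of $\Hcauchy{\Strans{p}}$, and the boundary limits), so it is a valid and slightly more detailed version of the paper's own reasoning.
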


\begin{lemma}\label{Hinverse:defn}
We can get an expression for $\Jcauchy{\Strans{p}}$ in terms of the free probability power series inverse $[H^{\lambda}_{\mu_{\Strans{p}}} ]^{-1}$. Given the quadratic nature in terms of the inverse variable, it makes sense for symmetry to replace $x$ by $s^2$, which is relevant as all the quantities we consider are positive. It will avoid us to restrict to positive variables. For s in a neighbohood of $0$,
\[
\big[ \Jcauchy{\Strans{p}}(s^2)\big]^2= \frac{1}{[H^{\lambda}_{\mu_{\Strans{p}}} ]^{-1}(s^2)}.
\]
\end{lemma}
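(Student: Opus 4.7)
The plan is to reduce the identity to the already-noted relation
\[
\Hcauchy{\Strans{p}}(x) \;=\; H^{\lambda}_{\mu_{\Strans{p}}}\!\bigl(\tfrac{1}{x^{2}}\bigr),
\]
which follows at once from Definition~\ref{defnHtransform} by substituting $\tfrac{1}{x^2}$ for the argument (so $\tfrac{1}{\sqrt{\cdot}}$ becomes $x$) and recognizing $\mathcal{G}_0(x)=\tfrac1x$. Once this identification is in hand, the lemma is essentially just inversion of a composition; I would write out that calculation explicitly rather than appeal to a general fact, since the two transforms live on different domains (the paper's $\Jcauchy{\Strans{p}}$ is defined on $[0,+\infty]$ with values in $[\sqrt{\maxroot{p}},+\infty]$, while $[H^{\lambda}_{\mu_{\Strans{p}}}]^{-1}$ is a formal power series around $0$).

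Concretely, I would set $y := \Jcauchy{\Strans{p}}(s^{2})$ for $s$ in a small real neighborhood of $0$, so that by Lemma~\ref{Hinverse:defn} we have $\Hcauchy{\Strans{p}}(y) = s^{2}$. Substituting into the identification above gives
\[
H^{\lambda}_{\mu_{\Strans{p}}}\!\bigl(\tfrac{1}{y^{2}}\bigr) \;=\; s^{2},
\]
and applying the compositional inverse $[H^{\lambda}_{\mu_{\Strans{p}}}]^{-1}$ (which exists and is analytic near $0$ by the Benaych--Georges lemma recalled after Definition~\ref{defnHtransform}) yields
\[
\tfrac{1}{y^{2}} \;=\; \bigl[H^{\lambda}_{\mu_{\Strans{p}}}\bigr]^{-1}(s^{2}).
\]
Rearranging and substituting back the definition of $y$ gives exactly $\bigl[\Jcauchy{\Strans{p}}(s^{2})\bigr]^{2} = 1/[H^{\lambda}_{\mu_{\Strans{p}}}]^{-1}(s^{2})$.

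The only real care needed is a branch/sign check: $\Jcauchy{\Strans{p}}$ takes strictly positive values on $(0,+\infty)$, so $y>0$ and $1/y^{2}$ is unambiguously defined; meanwhile, for $s$ real and small, $[H^{\lambda}_{\mu_{\Strans{p}}}]^{-1}(s^{2})$ is a small positive real (since $H^{\lambda}_{\mu_{\Strans{p}}}$ sends a neighborhood of $0$ to a neighborhood of $0$ with derivative $1$ at the origin). Thus both sides of the claimed equality are positive reals for $s$ in a punctured neighborhood of $0$, and the two power series agree there. This justifies extending the equality coefficient-by-coefficient as formal power series in $s$, which is the sense in which it will be used later (in the rewriting of the rectangular $R$-transform in Claim~\ref{Rtransformform}).
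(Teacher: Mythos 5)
Your proposal is correct and follows essentially the same route as the paper, whose entire proof is the one-line instruction to use the identity $\Hcauchy{\Strans{p}}(y)=H^{\lambda}_{\mu_{\Strans{p}}}(\tfrac{1}{y^{2}})$; you simply spell out the substitution $y=\Jcauchy{\Strans{p}}(s^{2})$, the application of the compositional inverse near $0$, and the positivity/domain checks that the paper leaves implicit. (A minor quibble: since $[H^{\lambda}_{\mu_{\Strans{p}}}]^{-1}(s^{2})$ vanishes at $s=0$, the two sides are Laurent rather than power series in $s$ — only $s^{2}[\Jcauchy{\Strans{p}}(s^{2})]^{2}$ is analytic, as the paper's subsequent corollary records — but this does not affect the argument.)
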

\begin{proof}
Use the formula $\Hcauchy{\Strans{p}}(y)=H^{\lambda}_{\mu_{\Strans{p}}} (\frac{1}{y^2})$.
\end{proof}

\begin{cor}
$s^2\big[ \Jcauchy{\Strans{p}}(s^2)\big]^2$ is analytic for $s$ in a neighborhood of $0$, even though  $\Jcauchy{\Strans{p}}(s^2)$ is not. It is important to emphasize that  $\Jcauchy{\Strans{p}}(s^2)$ is not a power series in $s^2$.  
\end{cor}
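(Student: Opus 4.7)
The plan is to reduce both assertions to the previous lemma, which identifies $[\Jcauchy{\Strans{p}}(s^2)]^2$ with $1/[H^{\lambda}_{\mu_{\Strans{p}}}]^{-1}(s^2)$, and then to read off the local behavior from the fact that $H^{\lambda}_{\mu_{\Strans{p}}}$ is a tangent-to-the-identity analytic function at $0$.

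First I would multiply both sides of the lemma by $s^2$ to obtain
\[
s^2 \bigl[\Jcauchy{\Strans{p}}(s^2)\bigr]^2 \;=\; \frac{s^2}{[H^{\lambda}_{\mu_{\Strans{p}}}]^{-1}(s^2)}.
\]
The Benaych-Georges lemma recalled just after Definition~\ref{defnHtransform} says that $H^{\lambda}_{\mu_{\Strans{p}}}$ is analytic at $0$ with $H^{\lambda}_{\mu_{\Strans{p}}}(0)=0$ and derivative $1$ at $0$; by the analytic inverse function theorem its compositional inverse shares these properties. Hence one can write $[H^{\lambda}_{\mu_{\Strans{p}}}]^{-1}(u) = u\cdot h(u)$ with $h$ analytic in a neighborhood of $0$ and $h(0)=1$. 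Substituting $u=s^2$ yields
\[
\frac{s^2}{[H^{\lambda}_{\mu_{\Strans{p}}}]^{-1}(s^2)} \;=\; \frac{1}{h(s^2)},
\]
which is analytic in $s$ near $0$ with value $1$ at the origin. This proves the first assertion.

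For the second assertion, taking a square root in the analytic identity just established gives
\[
\Jcauchy{\Strans{p}}(s^2) \;=\; \frac{\pm 1}{s}\sqrt{\,\frac{1}{h(s^2)}\,}.
\]
The square root factor is analytic in $s^2$ with value $1$ at $s=0$ (standard analytic branch of $\sqrt{1+\cdots}$), so expanding it as $1+b_1s^2+b_2s^4+\cdots$ shows that $\Jcauchy{\Strans{p}}(s^2)$ contains a $1/s$ singularity and therefore cannot be analytic at $s=0$, nor can it be a power series in $s^2$ (after stripping the pole what remains is a series in odd powers of $s$).

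There is no real obstacle here — the only thing to be slightly careful about is the sign of the square root, which is fixed by the fact that $\Jcauchy{\Strans{p}}$ maps into $[\sqrt{\maxroot{p}},+\infty]$ and hence is positive for small positive $s^2$, forcing the $+$ branch.
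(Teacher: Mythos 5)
Your proof is correct and follows essentially the same route as the paper: both use the preceding lemma to write $s^2\bigl[\Jcauchy{\Strans{p}}(s^2)\bigr]^2 = s^2/[H^{\lambda}_{\mu_{\Strans{p}}}]^{-1}(s^2)$ and cancel the $s^2$ against the tangent-to-the-identity expansion of the inverse (the paper writes the series $s^2+\sum_{i\geq 2}\beta_i s^{2i}$ explicitly where you write $u\,h(u)$ with $h(0)=1$). Your extra paragraph making the $1/s$ blow-up of $\Jcauchy{\Strans{p}}(s^2)$ explicit is a small, correct addition to what the paper merely asserts.
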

\begin{proof}
Let us write $[H^{\lambda}_{\mu_{\Strans{p}}} ]^{-1}(s)= s+ \sum_{i=2}^{\infty}\beta_is^{i}$ (which can be deduced from Definition~\ref{defnHtransform}) for some coefficients $\beta_i$. 
\[
s^2\big[ \Jcauchy{\Strans{p}}(s^2)\big]^2= \frac{s^2}{s^2+ \sum_{i=2}^{\infty}\beta_is^{2i}}=\frac {1}{1+\sum_{i=1}^{\infty}\beta_{i+1}s^{2i}},
\]
which is analytic (multiplying by $s^2$ gets rid of the singularity at $0$).
\end{proof}

We are now able to get a more simple formula for the Benaych-Georges' rectangular $R$-transform applied to a symmetric discrete measure. 

\begin{lemma}\label{Rtransformform}
\[
\Rtransform{\mu_{\Strans{p}}}(s^2)= \frac{-\lambda -1}{2\lambda} +\sqrt{ \frac{(\lambda-1)^2}{4\lambda^2}+  \frac{ s^2 [\Jcauchy{\Strans{p}}(s^2)]^2 }{\lambda} }
\]

\end{lemma}

\begin{proof}
Using Definition~\ref{Rrectdefn}, we get:
\begin{align*} \Rtransform{\mu_{\Strans{p}}}(s^2)= U \Big ( \frac{s^2}{[{H_{\mu_{\Strans{p}}}^\lambda}]^{-1}(s^2)} -1\Big)  &= \frac{-\lambda -1}{2\lambda} +\frac{\sqrt{(\lambda+1)^2+4\lambda \big(\frac{s^2}{{[H^{\lambda}_{\mu_{\Strans{p}}} ]^{-1}(s^2) }} -1\big)} } {2\lambda} \\
&= \frac{-\lambda -1}{2\lambda} +\frac{\sqrt{(\lambda-1)^2+ \frac{4\lambda s^2}{{[H^{\lambda}_{\mu_{\Strans{p}}} ]^{-1}(s^2) }} } } {2\lambda}\\
&=  \frac{-\lambda -1}{2\lambda} +\sqrt{ \frac{(\lambda-1)^2}{4\lambda^2}+  \frac{ s^2 [\Jcauchy{\Strans{p}}(s^2)]^2 }{\lambda} }.
\end{align*}
\end{proof}

\begin{defn}
For $p$ with all roots nonnegative, we define:
\[
\Rtransform{\Strans{p}}(s^2) := \Rtransform{\mu_{\Strans{p}}}(s^2).
\]
It is important to stress that here $\Rtransform{\Strans{p}}(s^2)$  is indeed a power series in $s^2$ and not just a function.
\end{defn}

\subsection{Truncating bivariate power series} \label{truncation}

We start by generalizing the notion of truncation to bivariate power series. For  $f(r,t)= \sum_{i,j} a_{i,j} r^it^j$, we denote by
\[
f \mod [ <r,t>^d]:= \sum_{i+j \leq d-1} a_{i,j}r^it^j .
\]
We will write:
\[
f\equiv g \mod[ <r,t>^d]
\]
when $f-g \mod [<r,t>^d] = 0 $. The following properties follow easily. If
\begin{align*}  a(r,t) &\equiv b(r,t) \mod[<r,t>^d]        &      c(r,t) &\equiv d(r,t) \mod[<r,t>^d]
\end{align*}
then, there is additivity and multiplicativity:
\begin{align*}
 a(r,t) +c(r,t) &\equiv b(r,t)+d(r,t) \mod[<r,t>^d]        &      a(r,t)c(r,t) &\equiv b(r,t)d(r,t) \mod[<r,t>^d].
\end{align*}
Therefore for any power series $ h$,
\[
h [ a(r,t) ] \equiv h [ b(r,t) ] \mod [<r,t>^d].
\]
We can also differentiate the modulo equalities, up to losing one degree:
\begin{align*}  \frac {\partial}{\partial r}  a(r,t) & \equiv \frac {\partial}{\partial r}  b (r,t) \mod[<r,t>^{d-1}]     &     \frac {\partial}{\partial t}  a(r,t) &\equiv \frac {\partial}{\partial t}  b (r,t).\mod[<r,t>^{d-1}]\end{align*}

\subsection{$L_q$-norms on intervals depending on the largest root}
\begin{defn}
Take a two-variable continuous  function $f(x,y)$ that  is positive on a bidimensional interval $X$. Define the $L_q$-norm associated with this interval as
\[
\| f(x,y) \|_q:=\Big( \int \int_{(x,y) \in X} f(x,y)^q  dx dy\Big)^{\frac{1}{q}}.
\]
\end{defn}

\begin{defn}
We will denote, similarly, by
\[
\| f(x,y) \|_{\infty}:= \sup_{(x, y) \in X} | f(x,y) |.
\]
\end{defn}
\begin{lemma} \label{infinityconvergence}
If $f$ is such that its $L_q$ and $L_{\infty}$ norms are finite, then
\[
\lim_{q \to \infty}\| f(x,y) \|_q= \| f(x,y) \|_{\infty}.
\]
\end{lemma}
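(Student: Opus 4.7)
The plan is to prove the classical result that $L^q$ norms converge to the $L^\infty$ norm on a set of finite measure by sandwiching $\|f\|_q$ between two bounds that both converge to $\|f\|_\infty$. Throughout, I assume the bidimensional interval $X$ has finite (Lebesgue) area, which is implicit in the finiteness of $\|f\|_q$ together with $\|f\|_\infty < \infty$ via the elementary inequality below.

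For the upper bound, I would note that pointwise $f(x,y) \leq \|f\|_\infty$ on $X$, so
\[
\|f\|_q^q = \iint_X f(x,y)^q \, dx\,dy \leq \|f\|_\infty^q \cdot |X|,
\]
where $|X|$ is the area of $X$. Taking $q$-th roots yields $\|f\|_q \leq \|f\|_\infty \cdot |X|^{1/q}$. Since $|X|$ is a fixed finite positive constant, $|X|^{1/q} \to 1$ as $q \to \infty$, giving $\limsup_{q\to\infty}\|f\|_q \leq \|f\|_\infty$.

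For the matching lower bound, I would use the continuity of $f$ to exploit the fact that $f$ nearly attains its supremum on a set of positive area. Fix $\varepsilon > 0$ with $\varepsilon < \|f\|_\infty$, and consider
\[
E_\varepsilon := \{(x,y) \in X : f(x,y) > \|f\|_\infty - \varepsilon\}.
\]
Since $f$ is continuous and its supremum is $\|f\|_\infty$, the set $E_\varepsilon$ is open in $X$ and nonempty, so it has strictly positive area $|E_\varepsilon| > 0$. Then
\[
\|f\|_q^q \geq \iint_{E_\varepsilon} f(x,y)^q \, dx\,dy \geq (\|f\|_\infty - \varepsilon)^q \cdot |E_\varepsilon|,
\]
and taking $q$-th roots gives $\|f\|_q \geq (\|f\|_\infty - \varepsilon) \cdot |E_\varepsilon|^{1/q}$. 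Letting $q \to \infty$, the factor $|E_\varepsilon|^{1/q} \to 1$, so $\liminf_{q\to\infty}\|f\|_q \geq \|f\|_\infty - \varepsilon$. Since $\varepsilon > 0$ was arbitrary, we conclude $\liminf_{q\to\infty}\|f\|_q \geq \|f\|_\infty$, which combined with the upper bound establishes the desired limit.

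The argument is entirely standard and presents no real obstacle; the only subtlety worth flagging is that the lower-bound step uses continuity of $f$ to guarantee that $|E_\varepsilon| > 0$ (rather than merely nonnegative). If the domain $X$ were unbounded or only had finite area under the assumption $\|f\|_q < \infty$ for some $q$, one would need a short extra remark, but since $X$ is a bidimensional interval on which $f$ is continuous and $\|f\|_\infty$ is finite, the hypotheses are already clean enough to make the sandwich argument go through directly.
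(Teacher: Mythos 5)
The paper states this lemma without proof (it is invoked as a standard fact), so the comparison is purely about correctness, and there is a genuine gap in your argument: your upper bound relies on $X$ having finite area, but in this paper $X$ is explicitly the unbounded region $[\sqrt{\maxroot{p}},+\infty]\times[\sqrt{\maxroot{p}},+\infty]$, so $|X|=\infty$ and the estimate $\|f\|_q^q\leq\|f\|_\infty^q\,|X|$ gives nothing. Your attempted justification that finiteness of $\|f\|_q$ and $\|f\|_\infty$ forces $|X|<\infty$ is false: the very functions the lemma is applied to, namely $f(x,y)=e^{-xr-yt}\Delta_{\lambda}^{+}(x,y,\mu_A)$ with $r,t>0$ (think of $e^{-x-y}$ on $[0,\infty)^2$), have all $L_q$ norms and the $L_\infty$ norm finite on a domain of infinite measure. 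The standard repair is to replace the finite-measure bound by interpolation against a fixed exponent: choosing $q_0$ with $\|f\|_{q_0}<\infty$, for $q>q_0$ one writes
\begin{equation*}
\|f\|_q^q=\iint_X f^{q_0}\,f^{\,q-q_0}\,dx\,dy\leq \|f\|_\infty^{\,q-q_0}\,\|f\|_{q_0}^{q_0},
\end{equation*}
so that $\|f\|_q\leq \|f\|_\infty^{1-q_0/q}\,\|f\|_{q_0}^{q_0/q}\to\|f\|_\infty$, which yields the correct $\limsup$ bound without any finiteness assumption on $|X|$.

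Your lower bound is essentially fine and survives in the infinite-measure setting: continuity of $f$ makes $E_\varepsilon=\{f>\|f\|_\infty-\varepsilon\}$ a nonempty open subset of $X$, hence of positive area, and you only need to add the remark that $|E_\varepsilon|<\infty$ (which follows from $\|f\|_q<\infty$, since $f$ is bounded below by a positive constant on $E_\varepsilon$; alternatively intersect $E_\varepsilon$ with a large bounded box) so that $|E_\varepsilon|^{1/q}\to 1$. With the upper bound corrected as above, the sandwich argument then closes the proof.
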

In all the following, the function $f(x,y)$ that we will consider will contain some problematic polynomial factor that doesn't have constant sign: $p_A(xy)$. We will therefore choose an interval X that depends on the polynomial (or the operator), more specifically if we denote by $R$ the largest root of $p$, we take
\[
X := [\sqrt{\maxroot p}, +\infty] \times [\sqrt{\maxroot p }, +\infty] =  [\sqrt{R}, +\infty] \times [\sqrt{R}, +\infty] .
\]
This ensures that for $(x,y)$ in this interval, the product $xy$ is above the largest root and the polynomial is positive.

\subsection{Rectangular Fuglede-Kadison determinant and potential}
We first recall the usual simple definition of the Fuglede-Kadison determinant. For a finite dimensional positive definite $d \times d $ matrix, it corresponds to the following normalized determinant:

\[
 \Delta^{+}(A) := \det(A)^{1/d}.
\]

But the main object needed will be the bivariate following object, defined for 
a general rectangular matrix $A$.
\begin{defn}
For  $\lambda \in ]0,1]$, and positive integers $m$, $d$ such that $\frac{d}{m}=\lambda$ and for $(x,y) \in X$, introduce the bivariate $\lambda$-rectangular Fuglede-Kadison determinant of an $m \times d$ matrix $A$:
\[
\Delta_{\lambda}^{+}(x,y,A):= y^{\frac{1-\lambda}{\sqrt{\lambda}}} \big[\Delta^{+}\big(xyI_d -A^{T}A\big)\big]^{\sqrt{\lambda}}.
\]
 It is  well defined because for  $(x,y) \in X$, the polynomial inside takes positive values. This definition shows that it doesn't depend on the dimensions, only on the root distribution and the parameter $\lambda$: indeed changing $d$ will not affect this quantity if the distribution is kept identical (see Remark~\ref{rmkdim} just below)   . We can however expand and we get
\[
\Delta_{\lambda}^{+}(x,y,A)= y^{\frac{m-d}{\sqrt{md}}}[p_A(xy)]^{\frac{1}{\sqrt{md}}}
\]
for $(x,y) \in X$.  

\begin{rmk}\label{rmkdim}
We normalize the characteristic polynomial so that the polynomials associated to two matrices of different sizes but with the same underlying measure on the roots don't differ. Take $A_1$, $ m_1 \times d_1$ and $A_2$, $ m_2 \times d_2$, two rectangular matrices such that $\frac{d_1}{m_1}=\frac{d_2}{m_2}=\lambda$ and $\mu_{p_{A_1}} = \mu_{p_{A_2}}$. Then
\[
\Delta_{\lambda}^{+}(x,y,A_1)= \Delta_{\lambda}^{+}(x,y,A_2).
\]
Therefore we can increase the dimensions and the normalized polynomials stay the same. 
In all the following we will denote by
\[
\Delta_{\lambda}^{+}(x,y,\mu_A):=\Delta_{\lambda}^{+}(x,y,A)
\]
if  $\mu_{\Strans{p_{A}}}= \mu_A$. 
\end{rmk}
\end{defn}

\begin{defn}[Fuglede-Kadison Potential]
For a given symmetric compact discrete measure $\mu_A$, we consider the following $\lambda$-Fuglede Kadison potential defined on $X$:
\[
F^{\lambda}_{\mu_A}(x,y):= - \log \Delta_{\lambda}^{+}(x,y,\mu_A).
\]
Rewrite $F^{\lambda}_{\mu_A}$ as 
\[
F^{\lambda}_{\mu_A}(x,y)= - \frac{1}{\sqrt{md}}\log [y^{m-d}{p_A}(xy)].
\]
\end{defn}

\begin{lemma}
$F^{\lambda}_{\mu_A}$ is convex and $\partial_x {F^{\lambda}_{\mu_A}} (z,z)\partial_y {F^{\lambda}_{\mu_A}}(z,z)=\Hcauchy{\mu_A}(z) $ for $z>0$.
\end{lemma}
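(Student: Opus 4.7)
The plan is to work from the explicit factored form
\[
F^{\lambda}_{\mu_A}(x,y) = -\tfrac{m-d}{\sqrt{md}}\log y \;-\; \tfrac{1}{\sqrt{md}}\sum_{i=1}^{d}\log(xy - r_i),
\]
where $r_1,\dots,r_d \ge 0$ are the roots of $p_A$. On the domain $X = [\sqrt{R},+\infty)^2$ with $R = \maxroot{p_A}$, one has $xy \ge R \ge r_i$ for every $i$, so each logarithm is well defined and the expression makes sense.

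For convexity, I would argue summand-by-summand. The coefficient $(m-d)/\sqrt{md}$ is nonnegative and $-\log y$ is convex. For each term $g(x,y) := -\log(xy - r)$ with $r \ge 0$ on the open region $\{xy>r\}$, a direct computation gives
\[
g_{xx} = \tfrac{y^2}{(xy-r)^2},\quad g_{yy} = \tfrac{x^2}{(xy-r)^2},\quad g_{xy} = \tfrac{r}{(xy-r)^2},
\]
so $g_{xx}>0$ and $\det \mathrm{Hess}(g) = (x^2y^2-r^2)/(xy-r)^4 = (xy+r)/(xy-r)^3 > 0$. Hence each summand is convex, and $F^{\lambda}_{\mu_A}$ is a nonnegative linear combination of convex functions, hence convex.

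For the product identity at $(z,z)$, I would compute the two partials:
\[
\partial_x F^{\lambda}_{\mu_A}(z,z) = -\tfrac{z\, p_A'(z^2)}{\sqrt{md}\, p_A(z^2)}, \qquad \partial_y F^{\lambda}_{\mu_A}(z,z) = -\tfrac{m-d}{\sqrt{md}\,z} - \tfrac{z\, p_A'(z^2)}{\sqrt{md}\, p_A(z^2)},
\]
multiply them, and simplify using the identities $(m-d)/md = (1-\lambda)/d$ and $1/md = \lambda/d^2$. The product collapses term-by-term into
\[
\tfrac{z}{d}\tfrac{p_A'(z^2)}{p_A(z^2)}\Big(\tfrac{\lambda\, z\, p_A'(z^2)}{d\, p_A(z^2)} + \tfrac{1-\lambda}{z}\Big),
\]
which is precisely the explicit formula for $\Hcauchy{\Strans{p_A}}(z)$ recorded just after Definition~\ref{Htransform:defn}. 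Since $\mu_A = \mu_{\Strans{p_A}}$, this is $\Hcauchy{\mu_A}(z)$, as required.

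The main obstacle, if one can call it that, is just careful bookkeeping: the $(m-d)/\sqrt{md}\cdot \log y$ summand contributes only to $\partial_y$, yet it is exactly what produces the $(1-\lambda)/z$ piece of the $H$-transform once the two partials are multiplied and the $\lambda$-substitutions are applied. Nothing in the argument is analytically delicate — the additive decomposition of $\log[y^{m-d}p_A(xy)]$ over the roots carries the whole proof for both parts.
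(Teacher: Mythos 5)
Your proposal is correct, and the product identity part is computed exactly as in the paper: the same two partial derivatives at $(z,z)$, multiplied and regrouped via $\tfrac{1}{m}=\tfrac{\lambda}{d}$, $\tfrac{m-d}{m}=1-\lambda$ to recover $\frac{z}{d}\frac{p_A'(z^2)}{p_A(z^2)}\big(\lambda\frac{z p_A'(z^2)}{d\,p_A(z^2)}+\frac{1-\lambda}{z}\big)=\Hcauchy{\Strans{p_A}}(z)$. Where you genuinely diverge from the paper is the convexity argument. The paper computes the Hessian of the full function $F^{\lambda}_{\mu_A}(x,y)=-\tfrac{1}{\sqrt{md}}\log[y^{m-d}p_A(xy)]$ and invokes Laguerre's inequality $p_A'^2(u)-p_A''(u)p_A(u)\ge 0$ for real-rooted $p_A$, which makes all three sign conditions ($\partial_{xx}F\ge 0$, $\partial_{yy}F\ge 0$, and the determinant $\tfrac{m-d}{md}\tfrac{p_A'^2-p_A''p_A}{p_A^2}\ge 0$) fall out at once; this is compact but uses Laguerre's inequality as a black box. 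You instead decompose $\log[y^{m-d}p_A(xy)]$ over the roots and verify by hand that each summand $-\log(xy-r)$, $r\ge 0$, has positive definite Hessian on $\{xy>r\}$ (determinant $(xy+r)/(xy-r)^3>0$), then sum; this is more elementary and self-contained (indeed it essentially reproves the Laguerre-type inequality in this special bivariate setting), at the cost of being per-root rather than giving the clean closed-form Hessian determinant of $F^{\lambda}_{\mu_A}$ that the paper records. Both arguments share the same (harmless) boundary caveat: at points of $X$ with $xy$ equal to the largest root the logarithm degenerates, so convexity is really established on the interior region $xy>\maxroot{p_A}$, which is all that is used later.
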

\begin{proof}
\begin{align*}
\partial_x {F^{\lambda}_{\mu_A}}(z,z)\partial_y {F^{\lambda}_{\mu_A}}(z,z)&= \frac{1}{md}  \Big( \frac{zp_A'(z^2)}{p_A(z^2)} \Big) \Big( \frac{zp_A'(z^2)}{p_A(z^2)} +\frac{(m-d)}{z}  \Big) =   \Big(\frac{1}{d}  \frac{zp_A'(z^2)}{p_A(z^2)} \Big) \Big( \frac{d}{m}\frac{1}{d}\frac{zp_A'(z^2)}{p_A(z^2)} +\frac{(m-d)}{m}\frac{1}{z}  \Big) \\
&=\Hcauchy{\Strans{p_A}}(z).
\end{align*}
As for the convexity it suffices to compute the second derivatives and show that $\partial_{xx} {F^{\lambda}_{\mu_A}} \geq 0$, $\partial_{yy} {F^{\lambda}_{\mu_A}} \geq 0$ and $ \partial_{xx} {F^{\lambda}_{\mu_A}}\partial_{yy} {F^{\lambda}_{\mu_A}}- (\partial_{xy} {F^{\lambda}_{\mu_A}})^2 \geq 0$. But we have

\begin{align*}
\partial_{xx} {F^{\lambda}_{\mu_A}}(x,y)&= \frac{1}{\sqrt{md}}\frac{y^2}{p_A^2(xy)} \Big( p_A'^2(xy)- p_A''(xy)p_A(xy)\Big) \\
\partial_{yy} {F^{\lambda}_{\mu_A}}(x,y)&= \frac{1}{\sqrt{md}} \frac{x^2}{p_A^2(xy)} \Big( p_A'^2(xy)- p_A''(xy)p_A(xy)\Big) + \frac{m-d}{\sqrt{md}}\frac{1}{y^2}\\
\partial_{xy} {F^{\lambda}_{\mu_A}}(x,y)&= \frac{1}{\sqrt{md}}\frac{xy}{p_A^2(xy)} \Big(p_A'^2(xy)- p_A''(xy)p_A(xy) \Big).
\end{align*}
We notice that as $p_A$ is real-rooted,
\[
 p_A'^2(xy)- p_A''(xy)p_A(xy) \geq 0
 \]
 by Laguerre's inequality, and therefore we have $\partial_{xx} {F^{\lambda}_{\mu_A}} \geq 0$, $\partial_{yy} {F^{\lambda}_{\mu_A}} \geq 0$. Lastly,
\[
 \partial_{xx} {F^{\lambda}_{\mu_A}}\partial_{yy} {F^{\lambda}_{\mu_A}}- (\partial_{xy} {F^{\lambda}_{\mu_A}})^2 =  \frac{m-d}{md} \frac{1}{{p_A}^2(xy)} \Big( p_A'^2(xy)- p_A''(xy)p_A(xy)\Big) \geq 0.
 \]
 \end{proof}

\subsection{Bidimensional Legendre transform}

The main tool to inverse gradients of convex functions is the Legendre transform.
We first recall the definition the bidimensional Legendre transform and basic porperties. 
Denote by $X^{*}= \nabla f(X)$,  a convex subset of $\mathbb{R}^2$, the image 
of $X$ by the gradient of $f$. 
\begin{defn} [bidimensional Legendre transform]
If $f(x,y)$ is a  convex function on  $X$  we define
for $(r,t) \in X^{*}$ :
\begin{align*}
f^{*}(r,t)&:=  \|  xr+ yt - f(x,y)\|_{\infty}\\
&= \sup_{(x,y)\in X}\{ xr+ yt - f(x,y)\} 
\end{align*}
\end{defn}
\begin{lemma} \label{gradientinv}
If $f$ is strictly convex on $X$, then we have for all $(x,y) \in X, (r,t)  \in 
X^*$
\[
[\nabla f^{*} \circ \nabla f] (x,y)= (x,y),
\]

\[
[\nabla f \circ \nabla f^*] (r,t)= (r,t).
\]

\end{lemma}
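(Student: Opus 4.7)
The plan is to exploit the fact that the Legendre transform converts the supremum definition into a first-order condition, and then to read off the inverse relation between $\nabla f$ and $\nabla f^*$ via the envelope theorem.

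First I would show that for each fixed $(r,t)\in X^*$ the supremum $\sup_{(x,y)\in X}\{xr+yt-f(x,y)\}$ is attained at a unique interior point $(x^*,y^*)$. Uniqueness follows directly from strict convexity of $f$, which makes the inner function $(x,y)\mapsto xr+yt-f(x,y)$ strictly concave, so it can have at most one maximizer. Existence follows from the assumption that $(r,t)\in X^*=\nabla f(X)$: we can pick $(x_0,y_0)\in X$ with $\nabla f(x_0,y_0)=(r,t)$, and this point is then a critical point of the concave function $xr+yt-f(x,y)$, hence its unique global maximizer on $X$. So $(x^*,y^*)=(x_0,y_0)$ is characterized by the first-order equation
\[
\nabla f(x^*,y^*)=(r,t).
\]

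Next I would differentiate $f^*$. Because $(x^*,y^*)$ depends smoothly on $(r,t)$ (strict convexity makes $\nabla f$ a local diffeomorphism near any interior point), we can compute
\[
\partial_r f^*(r,t)=x^*+\bigl(r-\partial_x f(x^*,y^*)\bigr)\partial_r x^*+\bigl(t-\partial_y f(x^*,y^*)\bigr)\partial_r y^*=x^*,
\]
since the first-order condition kills the two parenthesized terms, and analogously $\partial_t f^*(r,t)=y^*$. This is exactly the envelope theorem and gives
\[
\nabla f^*(r,t)=(x^*,y^*).
\]
Combining this with the first-order condition $\nabla f(x^*,y^*)=(r,t)$ immediately yields both composition identities: starting from $(x,y)\in X$ and setting $(r,t)=\nabla f(x,y)$ gives $\nabla f^*(\nabla f(x,y))=(x,y)$, while starting from $(r,t)\in X^*$ and taking $(x^*,y^*)=\nabla f^*(r,t)$ gives $\nabla f(\nabla f^*(r,t))=(r,t)$.

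The main obstacle I anticipate is not in the identities themselves but in the regularity bookkeeping: verifying that the maximizer is truly interior to $X$ (so that the gradient condition applies and no boundary terms enter the envelope computation), and justifying that the map $(r,t)\mapsto(x^*,y^*)$ inherits enough differentiability from $f$ to legitimize the chain-rule step. In our applications $X=[\sqrt{R},+\infty]\times[\sqrt{R},+\infty]$ and $f$ will be the Fuglede--Kadison potential, which is smooth and strictly convex on the interior of $X$, so these regularity hypotheses are unproblematic; the only care needed is to restrict both $(x,y)$ and $(r,t)$ to the interiors of $X$ and $X^*$ respectively, and the statement is really an interior statement in the first place.
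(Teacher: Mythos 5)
The paper does not actually prove this lemma: it is stated as a recalled standard property of the (bidimensional) Legendre transform, so there is no argument of the paper's to compare against. Your proof is the standard one — unique maximizer by strict concavity of $(x,y)\mapsto xr+yt-f(x,y)$, existence because $(r,t)\in\nabla f(X)$ supplies a critical point which is automatically the global maximizer, then the envelope theorem to get $\nabla f^*(r,t)=(x^*,y^*)$, and the two composition identities follow at once — and it is correct in substance. The one step I would tighten is the regularity justification you flag yourself: strict convexity alone does \emph{not} make $\nabla f$ a local diffeomorphism (think of $x\mapsto x^4$ at the origin, strictly convex with degenerate Hessian), so the chain-rule computation with $\partial_r x^*$, $\partial_r y^*$ is not literally licensed by the hypotheses of the lemma. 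The clean fix avoids differentiating the maximizer altogether: from the definition of $f^*$ one has, for every $(r',t')$,
\[
f^*(r',t')\;\geq\; x^*r'+y^*t'-f(x^*,y^*)\;=\;f^*(r,t)+x^*(r'-r)+y^*(t'-t),
\]
so $(x^*,y^*)$ is a subgradient of the convex function $f^*$ at $(r,t)$; uniqueness of the maximizer (Danskin's theorem, or the standard fact that the subdifferential of $f^*$ at $(r,t)$ is exactly the set of maximizers) then gives differentiability of $f^*$ there with $\nabla f^*(r,t)=(x^*,y^*)$, and the rest of your argument goes through unchanged. In the paper's application ($f$ the Fuglede--Kadison potential on the interior of $X$) the smoothness you invoke is available, so your version is adequate for the intended use; the subgradient route simply proves the lemma at the stated level of generality.
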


\begin{lemma}
For all $(r,t)\in X^*$,
\[
f^{*}(r,t)= \log \| e^{xr+yt-f(x,y)}\|_{\infty}.
\]
\end{lemma}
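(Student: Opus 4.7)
The plan is to unwind the definitions on the right-hand side and exploit the fact that $\log$ is strictly monotone and continuous on $(0, +\infty)$, which allows it to commute with a supremum of a positive quantity. Concretely, I would first observe that the function $(x,y)\mapsto e^{xr+yt-f(x,y)}$ is positive everywhere on $X$, so by the definition of the $L_\infty$-norm introduced in the earlier section,
\[
\| e^{xr+yt-f(x,y)}\|_\infty = \sup_{(x,y)\in X} e^{xr+yt-f(x,y)}.
\]

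Next I would push the logarithm inside the supremum. Since $\log$ is strictly increasing and continuous, for any positive bounded family of values one has $\log \sup_i a_i = \sup_i \log a_i$ (a net of positive numbers converges to its sup iff its log converges to the log of the sup). Applying this with $a = e^{xr+yt-f(x,y)}$ gives
\[
\log \sup_{(x,y)\in X} e^{xr+yt-f(x,y)} \;=\; \sup_{(x,y)\in X} \bigl(xr+yt-f(x,y)\bigr).
\]
The right-hand side is precisely $f^*(r,t)$ by the bidimensional Legendre transform definition just introduced, which closes the argument.

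There is essentially no real obstacle here: the only thing to check carefully is the positivity assumption needed to take logarithms, which is automatic since $e^{(\cdot)}>0$, and the interchange of $\log$ with $\sup$, which is standard but should be stated explicitly for clarity. A short sanity check that the $\sup$ is finite and attained (which follows from strict convexity of $f$ and the assumption $(r,t)\in X^* = \nabla f(X)$, so that the sup is realized at the unique $(x,y)$ with $\nabla f(x,y) = (r,t)$) makes the manipulation fully rigorous, after which the lemma is just a rewriting of definitions.
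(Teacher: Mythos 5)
Your proposal is correct and follows essentially the same route as the paper, which simply invokes the identity $\|e^{g(x,y)}\|_{\infty}=e^{\|g(x,y)\|_{\infty}}$ (equivalently, the interchange of $\log$ and $\sup$ for the positive function $e^{xr+yt-f(x,y)}$) and then identifies the resulting supremum with the Legendre transform. Your version just spells out the monotonicity/positivity justification in more detail, which is fine.
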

\begin{proof}
It follows from the fact that $\|   e^{g(x,y)}\|_{\infty}= e^{\|   g(x,y)\|_{\infty}}$.
\end{proof}

\subsection{Approximation of the inverse transform}
The goal in this section is to derive an explicit expression for $\Jcauchy{\Strans{p_A}}(s^2)$ using the bivariate tools introduced in the last section.

\begin{lemma}[Inverting the rectangular Cauchy transform] \label{Hconvergence} 
For $s \neq 0$, then we can exhibit $r_{p_A}(s)$ and $t_{p_A}(s)$ functions of $s$, such that  
$r_{p_A}(s)t_{p_A}(s)=s^2$,  and 
\[
  {\Jcauchy{\mu_A}}(s^2)^2 = {\Jcauchy{\Strans{p_A}}}(s^2)^2 =   {\frac{\partial}{\partial r} \log \|e^{-xr-yt}\Delta_{\lambda}^{+}(x,y,\mu_A)  
\|_{\infty}  \frac{\partial}{\partial t} \log \|e^{-xr-yt} 
\Delta_{\lambda}^{+}(x,y,\mu_A) \|_{\infty}}_{\big|_{r=r_{p_A}(s), t=t_{p_A}(s)}}.
\]
 
\end{lemma}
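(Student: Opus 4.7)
The plan is to parameterize $(r,t)$ so that the point $(-r,-t)$ is precisely the image of the diagonal point $(z,z)=(\Jcauchy{\Strans{p_A}}(s^2),\Jcauchy{\Strans{p_A}}(s^2))$ under $\nabla F^{\lambda}_{\mu_A}$, and then to recognize the quantity $\log\|e^{-xr-yt}\Delta_{\lambda}^{+}(x,y,\mu_A)\|_{\infty}$ as the Legendre transform of $F^{\lambda}_{\mu_A}$ evaluated at $(-r,-t)$. The inversion property of the Legendre transform (Lemma \ref{gradientinv}) will then recover $z$ from each partial derivative, and the product will yield $z^{2}=\Jcauchy{\Strans{p_A}}(s^2)^2$.

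First, set $z:=\Jcauchy{\Strans{p_A}}(s^2)$, which lies in $[\sqrt{\maxroot{p_A}},+\infty]$ by Lemma \ref{Hinverse:defn}, so $(z,z)\in X$. Applying the identity from the previous lemma, $\partial_x F^{\lambda}_{\mu_A}(z,z)\cdot \partial_y F^{\lambda}_{\mu_A}(z,z)=\Hcauchy{\Strans{p_A}}(z)=s^2$. Since $p_A$ is positive and increasing past its largest root, both partials are strictly negative, so I define
\[
r_{p_A}(s):=-\partial_x F^{\lambda}_{\mu_A}(z,z),\qquad t_{p_A}(s):=-\partial_y F^{\lambda}_{\mu_A}(z,z),
\]
which are positive and satisfy $r_{p_A}(s)t_{p_A}(s)=s^{2}$ by construction.

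Next, I rewrite the right-hand side. Using $\Delta_{\lambda}^{+}(x,y,\mu_A)=e^{-F^{\lambda}_{\mu_A}(x,y)}$ and the identity $\log\|e^{g}\|_{\infty}=\|g\|_{\infty}$,
\[
\log\|e^{-xr-yt}\Delta_{\lambda}^{+}(x,y,\mu_A)\|_{\infty}=\sup_{(x,y)\in X}\bigl[x(-r)+y(-t)-F^{\lambda}_{\mu_A}(x,y)\bigr]=(F^{\lambda}_{\mu_A})^{*}(-r,-t).
\]
By our choice, $(-r_{p_A}(s),-t_{p_A}(s))=\nabla F^{\lambda}_{\mu_A}(z,z)\in X^{*}$, so Lemma \ref{gradientinv} gives $\nabla (F^{\lambda}_{\mu_A})^{*}(-r_{p_A}(s),-t_{p_A}(s))=(z,z)$. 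Differentiating the composition in $r$ and $t$ introduces a minus sign each time, so both partial derivatives of $\log\|e^{-xr-yt}\Delta_{\lambda}^{+}(x,y,\mu_A)\|_{\infty}$ equal $-z$ at $(r_{p_A}(s),t_{p_A}(s))$, and their product is $z^{2}=\Jcauchy{\Strans{p_A}}(s^2)^2$, as claimed.

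The main obstacle I expect is justifying the use of Lemma \ref{gradientinv}: it requires strict convexity of $F^{\lambda}_{\mu_A}$ on $X$, and the determinant of the Hessian computed in the convexity lemma equals $\tfrac{m-d}{md}\cdot \tfrac{1}{p_A^{2}(xy)}\bigl(p_A'^{2}(xy)-p_A''(xy)p_A(xy)\bigr)$, which is strictly positive for $\lambda<1$ by the strict Laguerre inequality (on the interior of $X$, $p_A$ has positive and distinct contributions from its roots). The degenerate case $\lambda=1$, where the potential is only convex along the diagonal, must either be excluded or handled by a limiting argument $\lambda\to 1^{-}$. A secondary technical point is checking that the sup defining $(F^{\lambda}_{\mu_A})^{*}$ is attained in the interior of $X$, which follows from $F^{\lambda}_{\mu_A}(x,y)\to+\infty$ as $(x,y)$ approaches the corner $(\sqrt{\maxroot{p_A}},\sqrt{\maxroot{p_A}})$ and $F^{\lambda}_{\mu_A}(x,y)$ growing logarithmically at infinity while $xr+yt$ grows linearly for $r,t>0$.
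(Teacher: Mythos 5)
Your proposal is correct and follows essentially the same route as the paper: define $r_{p_A}(s)=-\partial_x F^{\lambda}_{\mu_A}(z,z)$, $t_{p_A}(s)=-\partial_y F^{\lambda}_{\mu_A}(z,z)$ at $z=\Jcauchy{\Strans{p_A}}(s^2)$, identify $\log\|e^{-xr-yt}\Delta_{\lambda}^{+}(x,y,\mu_A)\|_{\infty}$ with the Legendre transform $(F^{\lambda}_{\mu_A})^{*}(-r,-t)$, and invoke Lemma \ref{gradientinv} to recover $(z,z)$ and hence $z^2$ from the product of partials. Your added remarks on strict convexity (the $\lambda=1$ degeneracy) and attainment of the supremum are legitimate technical caveats that the paper itself passes over silently, but they do not change the argument.
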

\begin{proof}
Recall that
\[
F^{\lambda}_{\mu_A}(x,y)= - \log \Delta_{\lambda}^{+}(x,y,\mu_A).
\]
Take  $s \neq 0$.We have by definition  $\Hcauchy{\Strans{p_A}} \circ \Jcauchy{\Strans{p_A}} (s^2) = s^2$. Call $z_{p_A}(s):= \Jcauchy{\Strans{p_A}} (s^2)$ to alleviate the computations. Ideally, we would want some explicit power series of $z_{p_A}$ in $s$. This being too complicated, we look for an expression that would explicitly depend on  $r_{p_A}$ and $t_{p_A}$ such that $r_{p_A}t_{p_A}=s^2= \Hcauchy{\Strans{p_A}}(z_{p_A})$.

Define: 
\begin{align*}
r_{p_A}(s):=&- \partial_xF^{\lambda}_{\mu_A}(z_{p_A},z_{p_A}) = \frac{1}{\sqrt{md}} \frac{z_{p_A}p_A'(z_{p_A}^2)}{p_A(z_{p_A}^2)},\\
t_{p_A}(s):=&- \partial_yF^{\lambda}_{\mu_A}(z_{p_A},z_{p_A})= \frac{1}{\sqrt{md}} \frac{z_{p_A}p_A'(z_{p_A}^2)}{p_A(z_{p_A}^2)} + \frac{1}{\sqrt{md}}\frac{m-d}{z_{p_A}}.
\end{align*}

Given that $ z_{p_A}> \sqrt{R}$ as mentioned, we get that $r_{p_A}>0$, $t_{p_A}>0$. 
Furthermore, using the fact that the gradient of the Legendre transform is the 
inverse of the gradient of $F$, like stated in Lemma~\ref{gradientinv}, we obtain,
applying the equality at $(z_{p_A},z_{p_A})$
\[
 \begin{bmatrix}  \partial_r ({F^{\lambda}_{\mu_A}})^{*} [ \partial_xF^{\lambda}_{\mu_A}(z_{p_A},z_{p_A}), \partial_y F^{\lambda}_{\mu_A}(z_{p_A},z_{p_A})   ] \\    \partial_t ({F^{\lambda}_{\mu_A}})^{*} [ \partial_xF^{\lambda}_{\mu_A}(z_{p_A},z_{p_A}), \partial_y F^{\lambda}_{\mu_A}(z_{p_A},z_{p_A})   ]   \end{bmatrix} = \begin{bmatrix} z_{p_A} \\ z_{p_A} \end{bmatrix}.
\]
Making the product of the two lines we get
\[
\partial_r ({F^{\lambda}_{\mu_A}})^{*} (-r_{p_A},-t_{p_A})  \partial_t ({F^{\lambda}_{\mu_A}})^{*} (-r_{p_A},-t_{p_A})= z_{p_A}^2.
\]
Now let's rewrite the left hand side quantity and conclude:
\begin{align}
&\partial_r ({F^{\lambda}_{\mu_A}})^{*} (-r_{p_A},-t_{p_A})  \partial_t ({F^{\lambda}_{\mu_A}})^{*} (-r_{p_A},-t_{p_A})= \frac{\partial}{\partial 
r} \log \|e^{-xr-yt- F^{\lambda}_{\mu_A}(x,y)} \|_{\infty}  \frac{\partial}{\partial t} \log 
\|e^{-xr-yt- F^{\lambda}_{\mu_A}(x,y)} \|_{\infty}\big|_{r=r_{p_A}, t=t_{p_A}} \\
&=  \frac{\partial}{\partial r} \log \|e^{-xr-yt}\Delta_{\lambda}^{+}(x,y,\mu_A)  
\|_{\infty}  \frac{\partial}{\partial t} \log \|e^{-xr-yt} 
\Delta_{\lambda}^{+}(x,y,\mu_A) \|_{\infty}\big|_{r=r_{p_A}, t=t_{p_A}}.
\end{align}
\end{proof}

\begin{cor}
$r_{p_A}(s)$ and $t_{p_A}(s)$ are analytic functions of $s$ in the neighborhood of zero.
\end{cor}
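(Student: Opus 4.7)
The plan is to make explicit the singular behavior of $z_{p_A}(s) := \Jcauchy{\Strans{p_A}}(s^2)$ at $s=0$ and then absorb it into standard power-series manipulations. The first step is to observe that, by the Corollary following Lemma~\ref{Hinverse:defn}, the function $s^2[\Jcauchy{\Strans{p_A}}(s^2)]^2$ is analytic in a neighborhood of $0$. Moreover, since $H^{\lambda}_{\mu_{\Strans{p_A}}}(y) = y + O(y^2)$ (its compositional inverse therefore also begins with $y$), the value at $s=0$ equals $1$. Because $\Jcauchy{\Strans{p_A}}(s^2) > 0$ on a right neighborhood of $0$, I can extract the positive analytic square root and define
\[
w(s) \;:=\; s\,\Jcauchy{\Strans{p_A}}(s^2) \;=\; s\,z_{p_A}(s),
\]
which is analytic at $s=0$ with $w(0)=1$. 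In particular, $1/z_{p_A}(s) = s/w(s)$ is analytic at $s=0$ and vanishes there.

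The second step is a Laurent expansion at infinity of the logarithmic derivative appearing in the definitions of $r_{p_A}$ and $t_{p_A}$. Writing $p_A(u) = \prod_{i=1}^{d}(u-\alpha_i)$ with $0 \le \alpha_i \le R := \maxroot{p_A}$, we have, for $|z|^2 > R$,
\[
\frac{z\,p_A'(z^2)}{p_A(z^2)} \;=\; \sum_{i=1}^{d}\frac{z}{z^2-\alpha_i} \;=\; \frac{1}{z}\,G\!\left(\frac{1}{z^2}\right),
\qquad
G(u) \;:=\; \sum_{i=1}^{d}\frac{1}{1-\alpha_i u},
\]
where $G$ is analytic in the disk $|u| < 1/R$, with $G(0) = d$. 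Substituting $z = z_{p_A}(s) = w(s)/s$, and noting that $1/z_{p_A}(s)^2 = s^2/w(s)^2$ is analytic at $s=0$ and vanishes there (so lies in the domain of $G$ for $|s|$ small), I obtain
\[
r_{p_A}(s) \;=\; \frac{1}{\sqrt{md}}\cdot\frac{s}{w(s)}\cdot G\!\left(\frac{s^2}{w(s)^2}\right),
\]
which is a composition of functions analytic at $s=0$, hence analytic. Adding the elementary analytic term $\frac{m-d}{\sqrt{md}}\cdot\frac{s}{w(s)}$ gives the analyticity of $t_{p_A}(s)$ near $s=0$ as well.

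The only subtle point in the argument is the analytic extraction of $w(s)$ in the first step: it requires that $s^2[\Jcauchy{\Strans{p_A}}(s^2)]^2$ be analytic \emph{and non-vanishing} at $s=0$, which is precisely what the Corollary after Lemma~\ref{Hinverse:defn} delivers (equivalently, that the rectangular Cauchy transform behaves like $1/x^2$ at infinity, not slower). Without this non-vanishing, $z_{p_A}(s)$ could have worse than a simple pole at $s=0$ and no such factorization through an analytic function of $s$ would exist; every other step is a routine composition of analytic maps on shrunk neighborhoods.
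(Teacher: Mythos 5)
Your proof is correct and follows essentially the same route as the paper: the paper expands $r_{p_A}$ as a power series in $1/z_{p_A}$ and uses $1/z_{p_A}=\sqrt{[H^{\lambda}_{\mu_{\Strans{p_A}}}]^{-1}(s^2)}=\sum_i z_i s^i$ together with composition of power series, which is exactly your factorization through $G(1/z^2)/z$ and $s/w(s)$. Your version simply makes explicit the coefficients (via the partial-fraction form of the logarithmic derivative) and the non-vanishing of $s^2[\Jcauchy{\Strans{p_A}}(s^2)]^2$ at $s=0$ that justifies the square-root branch, details the paper leaves implicit.
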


\begin{proof}
The proofs are similar. We start by the decomposition:
\[
r_{p_A}(s) = \sum_{i=1}^{\infty} \rho_i \frac{1}{z_{p_A}^i}
\]
where $z_{p_A}= \Jcauchy{\mu_A}(s^2)$. Now remember that
\[
\frac{1}{z_{p_A}}= \sqrt {[H^{\lambda}_{\mu_A} ]^{-1}(s^2)} = \sum_{i=1}^{\infty} z_i s^i.
\]
The result follows by composition of power series around the origin. 
\end{proof}

\begin{rmk}
Notice that in the square case, $r_{p_A}=s$ and $t_{p_A}=s$. In the rectangular case, they are approximations of the identity, and in all the following we will show that in this case, the higher order terms in the power expansions in $s$ can still be neglected (but it is not trivial).
\end{rmk}
Computing the infinity norm is not easy, and we try to reach it through $L_q$-norms and taking limits as $q$ goes to infinity. 
 
 \begin{defn}
  For an $m \times d$ matrix A with singular distribution $\mu_A$, define the finite 
  inverse rectangular transform as
 \[
\mathcal{Q}_{\mu_A}^{d,\lambda}(s):= {\frac{\partial}{\partial r}\big( \log 
 \|e^{-xr-yt} \Delta_{\lambda}^{+}(x,y,\mu_A)\|_{\sqrt{md}} \big)  
 \frac{\partial}{\partial t} \big(\log\| e^{-xr-yt} 
 \Delta_{\lambda}^{+}(x,y,\mu_A)\|_{\sqrt{md}} \big)}_{\big|_{r=r(s), t=t(s)}}.
 \]
 
 \end{defn}
 \begin{lemma} \label{lemmalpinfinity}
 For $s$ small enough such that $ {\Jcauchy{\mu_A}}(s^2)$ is well defined and finite,
 \[
  \lim_{\substack{d\to \infty \\}}  \mathcal{Q}_{\mu_A}^{d,\lambda}(s) =  {\Jcauchy{\mu_A}}(s^2)^2.
  \]
  The limit depends only on $\mu_A$ (which is independent from the dimension).
 \end{lemma}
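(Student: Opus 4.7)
The plan is to recognize $\mathcal{Q}_{\mu_A}^{d,\lambda}(s)$ as the $L_q$-analogue of the $L_\infty$ expression from Lemma~\ref{Hconvergence}, with $q=\sqrt{md}$, and then pass to the limit $d\to\infty$. Since $\lambda = d/m$ is held fixed, $\sqrt{md} = d/\sqrt{\lambda}\to\infty$, so this is really a large-$q$ limit. A preliminary observation is that the evaluation point $(r_{p_A}(s),t_{p_A}(s))$ depends only on $\mu_A$, $\lambda$ and $s$ (and not on $d$ separately), since one can rewrite
\[
r_{p_A}(s) = \sqrt{\lambda}\,\mathcal{G}_{\Strans{p_A}}(z_{p_A}), \qquad t_{p_A}(s) = \sqrt{\lambda}\,\mathcal{G}_{\Strans{p_A}}(z_{p_A}) + \frac{1-\lambda}{\sqrt{\lambda}}\cdot\frac{1}{z_{p_A}},
\]
using $(m-d)/\sqrt{md}=(1-\lambda)/\sqrt{\lambda}$. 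Thus the point at which we evaluate is fixed along the sequence.

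Next I would set
\[
G_q(r,t) := \log \bigl\|e^{-xr-yt}\,\Delta_\lambda^+(x,y,\mu_A)\bigr\|_q
\]
on an open neighborhood of $(r_{p_A}(s),t_{p_A}(s))$ in $X^*$. Each $G_q$ is convex in $(r,t)$ by H\"older's inequality, since $e^{-xr-yt}\Delta_\lambda^+$ is log-affine in $(r,t)$. By Lemma~\ref{infinityconvergence} we have pointwise convergence $G_q(r,t)\to G_\infty(r,t)$, and $G_\infty$ is related to the Legendre transform $(F_{\mu_A}^\lambda)^*$ by a sign flip in each argument, hence is itself convex. By construction, $\mathcal{Q}_{\mu_A}^{d,\lambda}(s) = \partial_r G_{\sqrt{md}}\cdot \partial_t G_{\sqrt{md}}$ at the point $(r_{p_A}(s),t_{p_A}(s))$, while Lemma~\ref{Hconvergence} identifies $\partial_r G_\infty\cdot \partial_t G_\infty$ at the same point with $\Jcauchy{\mu_A}(s^2)^2$.

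The main obstacle is interchanging the limit with the partial derivatives. For this I would invoke the standard convex-analysis fact that if a sequence of convex functions converges pointwise on an open convex set, then their gradients converge to the gradient of the limit at every point where the limit is differentiable. Since $F_{\mu_A}^\lambda$ is strictly convex (by the Laguerre-inequality computation already recorded in the excerpt), its Legendre transform and hence $G_\infty$ is differentiable at $(r_{p_A}(s),t_{p_A}(s))$, with gradient determined by Lemma~\ref{gradientinv}. Consequently $\partial_r G_{\sqrt{md}}$ and $\partial_t G_{\sqrt{md}}$ each converge at this fixed point, and the product of the limits is exactly $\Jcauchy{\mu_A}(s^2)^2$.

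Finally I would verify the minor technical points: that for $s$ small enough the $L_q$-norms in question are finite on $X$ (which follows from $r,t>0$ together with the polynomial growth of $\Delta_\lambda^+$, giving exponential decay of the integrand at infinity), and that we may restrict to an open neighborhood of $(r_{p_A}(s),t_{p_A}(s))$ on which $G_\infty<+\infty$ so that the convex-convergence argument applies. The hard part is really just the convergence of gradients; everything else is bookkeeping with the identifications already made in Lemma~\ref{Hconvergence}.
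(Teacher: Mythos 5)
Your proposal is correct and follows the same overall skeleton as the paper -- recognize $\mathcal{Q}_{\mu_A}^{d,\lambda}(s)$ as the $L_{\sqrt{md}}$-version of the $L_\infty$ expression in Lemma~\ref{Hconvergence} (noting, as the paper does implicitly, that $\Delta_{\lambda}^{+}(x,y,\mu_A)$ and the evaluation point $(r_{p_A}(s),t_{p_A}(s))$ depend only on $\mu_A,\lambda,s$ and not on $d$), use Lemma~\ref{infinityconvergence} for pointwise convergence of the norms, and then justify interchanging the $d\to\infty$ limit with the partial derivatives. Where you genuinely diverge is in the mechanism for that interchange: the paper works one variable at a time, using convexity of the Laplace transform (Lemma~\ref{Laplaceconvex}) to get monotonicity of $\partial_r f_d$, Dini's second theorem (Lemma~\ref{secondDini}) to upgrade to uniform convergence of the derivatives, and then the classical derivative-uniform-convergence lemma (Lemma~\ref{derivativeuniform}); you instead use joint convexity of $G_q(r,t)=\log\|e^{-xr-yt}\Delta_{\lambda}^{+}\|_q$ in $(r,t)$ and the standard convex-analysis theorem that pointwise convergence of convex functions on an open set forces convergence of gradients at any point where the limit is differentiable. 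Your route buys robustness: it needs no uniform convergence at all, and it sidesteps a soft spot in the paper's argument, namely that Dini's second theorem is applied to the sequence $f_d'$ without first establishing its pointwise convergence to a continuous limit. The price is that you must know $G_\infty$ (essentially the Legendre transform $(F^{\lambda}_{\mu_A})^*$ up to sign flips) is differentiable at the evaluation point, which you obtain from strict convexity of $F^{\lambda}_{\mu_A}$; note that the Hessian determinant computed in the paper is strictly positive only for $\lambda<1$, so the $\lambda=1$ case needs a separate word -- but this caveat is equally present in the paper's reliance on Lemma~\ref{gradientinv}, so it is not a gap of your argument relative to theirs.
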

 
 \begin{rmk}
 As  $s^2{\Jcauchy{\mu_A}}(s^2)^2 $ is analytic in $s$, we will show in the next section that similarly $s^2\mathcal{Q}_{\mu_A}^{d,\lambda}(s)$ is analytic too.
 \end{rmk}
 We will need to use the following results of uniform convergence to prove Lemma~\ref{lemmalpinfinity}.
 \begin{lemma} [Second theorem of Dini] \label{secondDini}
 Assume that are given a sequence of functions $\big(f_n(x) \big)_n $defined on a segment of the real line and such that
 \begin{itemize}
 \item f is continuous,
 \item there is monotonicity in $x$: for $x \leq y, f_n(x) \leq f_n(y)$,
 \item  there is pointwise convergence: $f_n(x)$ converges to $f(x)$,
 \end{itemize}
 then the convergence is uniform.
 \end{lemma}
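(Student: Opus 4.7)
The plan is to exploit the continuity of $f$ on the compact segment together with the monotonicity of each $f_n$ via a standard finite-partition argument. Since $f$ is continuous on the compact interval in question, it is uniformly continuous there. Given $\varepsilon > 0$, I would first select a partition $a = x_0 < x_1 < \cdots < x_N = b$ fine enough that $|f(x_{i+1}) - f(x_i)| < \varepsilon$ for every $i$. Note that $f$ itself inherits nondecreasingness from pointwise convergence of nondecreasing $f_n$'s, so the values $f(x_i)$ are well ordered, which I will use implicitly.

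Next, by the pointwise convergence $f_n \to f$ applied at each of the \emph{finitely many} points $x_0, \ldots, x_N$, I would choose $N_0$ large enough that for all $n \geq N_0$ and every $i \in \{0, 1, \ldots, N\}$,
\[
|f_n(x_i) - f(x_i)| < \varepsilon.
\]
This gives uniform control at the partition points.

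The key step is then the monotonicity squeeze: for an arbitrary $x \in [a,b]$, pick $i$ with $x_i \leq x \leq x_{i+1}$; by monotonicity of $f_n$,
\[
f_n(x_i) \leq f_n(x) \leq f_n(x_{i+1}),
\]
and similarly $f(x_i) \leq f(x) \leq f(x_{i+1})$. Combining the two-sided bounds from the previous step with $|f(x_{i+1}) - f(x_i)| < \varepsilon$ yields an estimate of the form $|f_n(x) - f(x)| < 3\varepsilon$ (or a small constant times $\varepsilon$) uniformly in $x$, which is exactly uniform convergence.

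There is no real obstacle here beyond packaging: the only mild subtlety is recognizing that neither $f_n$ nor $f$ is assumed continuous, but monotonicity alone suffices to sandwich $f_n(x)$ between values at neighboring partition points, while continuity of the limit $f$ is what allows the finite partition to be chosen so that its image is $\varepsilon$-dense. This is precisely where the two hypotheses (monotonicity of the sequence, continuity of the limit) interact to force uniformity out of pointwise convergence.
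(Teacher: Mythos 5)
Your proof is correct: it is the standard argument for Dini's second theorem (uniform continuity of the continuous limit $f$ on the compact segment gives a finite partition with $\varepsilon$-small oscillation of $f$, pointwise convergence handles the finitely many partition points, and monotonicity of each $f_n$ together with the inherited monotonicity of $f$ sandwiches $f_n(x)-f(x)$ between quantities of size at most $2\varepsilon$). The paper simply states this classical result without proof, so there is nothing to compare beyond noting that your write-up supplies the usual proof correctly, including the one genuine subtlety you flag, namely that no continuity of the $f_n$ is needed.
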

 \begin{lemma}[Derivative uniform convergence]  \label{derivativeuniform}
 If $f_n$ converges pointwise to $f, f_n' $ converges uniformly to $f'$, then $f_n$ converges uniformly to $f$and $f'=\lim_{n \to \infty} f_n'$.
 \end{lemma}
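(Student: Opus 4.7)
The plan is to convert uniform convergence of the derivatives into uniform convergence of the functions themselves by using the fundamental theorem of calculus, with the pointwise convergence serving to pin down the constants of integration. Throughout I will implicitly assume the domain is a (possibly small) closed interval on which each $f_n$ is $C^1$, which is all that is needed for the applications in Section~\ref{adapt}; the statement as written in the paper clearly envisions such a setting.

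First, I would fix any base point $x_0$ in the domain and write, for arbitrary $x$,
\[
f_n(x) - f_m(x) = \bigl(f_n(x_0)-f_m(x_0)\bigr) + \int_{x_0}^{x}\bigl(f_n'(t)-f_m'(t)\bigr)\,dt,
\]
which gives the estimate
\[
|f_n(x) - f_m(x)| \;\leq\; |f_n(x_0)-f_m(x_0)| \;+\; |x - x_0|\cdot \sup_{t}|f_n'(t)-f_m'(t)|.
\]
The uniform convergence of $f_n'$ implies that $(f_n')$ is uniformly Cauchy, and pointwise convergence at $x_0$ implies $(f_n(x_0))$ is Cauchy. On a bounded interval around $x_0$, the displayed inequality then shows that $(f_n)$ is uniformly Cauchy, so it converges uniformly to some limit; by the hypothesis of pointwise convergence, that uniform limit must coincide with $f$.

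Next, to identify $\lim_n f_n'$ with $f'$, I would pass to the limit in the integral identity $f_n(x) = f_n(x_0) + \int_{x_0}^{x} f_n'(t)\,dt$. The uniform convergence of $f_n'$ to its limit (call it $g$) justifies interchanging the limit with the integral, yielding
\[
f(x) - f(x_0) = \int_{x_0}^{x} g(t)\, dt.
\]
Because $g$ is a uniform limit of continuous functions it is continuous, so the fundamental theorem of calculus gives $f'(x)=g(x)$, i.e.\ $f' = \lim_n f_n'$.

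The only real subtlety is the need for boundedness of the interval to control the $|x - x_0|$ factor in the Cauchy estimate; in the intended application the functions live on a small neighborhood of zero where this is automatic. A related technical point is the need to be able to apply FTC to each $f_n$, which is why the standard statement assumes the $f_n$ are $C^1$ (or at least absolutely continuous with integrable derivative). Apart from these standard hypotheses the argument is routine.
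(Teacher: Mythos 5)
Your argument is correct: the fundamental-theorem-of-calculus estimate makes $(f_n)$ uniformly Cauchy on a bounded interval, and passing to the limit in $f_n(x)=f_n(x_0)+\int_{x_0}^x f_n'(t)\,dt$ identifies the derivative of the limit with the (continuous) uniform limit of the $f_n'$, which is exactly the standard proof of this classical lemma. The paper itself states the lemma without proof, treating it as a textbook fact to justify exchanging limit and derivative in the proof of Lemma~\ref{lemmalpinfinity}, and your added hypotheses ($C^1$ functions on a bounded interval) are satisfied in that application, so there is nothing to reconcile.
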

 We will also need to use convexity of Laplace transforms.
 \begin{lemma} [Convexity of Laplace transform] \label{Laplaceconvex} 
 Consider a function $g$ that is positive and sufficiently smooth so that it's Laplace tranform is well defined on a domain $X$,
 \[
 L(s):= \log \big(\int_{X}e^{-su}g(u) du \big).
 \]
 Then $L$ is convex, therefore $L'$ is monotonous.
 \end{lemma}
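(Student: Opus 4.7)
The plan is to prove convexity of $L$ directly, and then derive monotonicity of $L'$ as an immediate consequence. There are two natural routes, and I would sketch the first and mention the second as a cross-check.

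The first route is to apply H\"older's inequality. Write $L(s) = \log M(s)$ with $M(s) := \int_X e^{-su} g(u)\, du$. For any $\lambda \in [0,1]$ and $s_1, s_2$ in the domain of definition, factor the integrand at $s = \lambda s_1 + (1-\lambda) s_2$ as
\[
e^{-su} g(u) = \bigl(e^{-s_1 u} g(u)\bigr)^{\lambda}\bigl(e^{-s_2 u} g(u)\bigr)^{1-\lambda},
\]
using positivity of $g$ to make sense of the fractional powers. H\"older's inequality with conjugate exponents $1/\lambda$ and $1/(1-\lambda)$ gives
\[
M(\lambda s_1 + (1-\lambda) s_2) \le M(s_1)^{\lambda} M(s_2)^{1-\lambda}.
\]
Taking logarithms yields $L(\lambda s_1 + (1-\lambda)s_2) \le \lambda L(s_1) + (1-\lambda) L(s_2)$, which is exactly the convexity statement. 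This route has the advantage of only requiring integrability (no differentiation under the integral) and automatically exploits positivity of $g$, which the hypothesis already provides.

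The second route, which I would record as a sanity check, is to differentiate $L$ twice. Under smoothness of $g$ and standard domination hypotheses allowing differentiation under the integral sign, a direct computation gives
\[
L''(s) \;=\; \frac{\int u^2 e^{-su} g(u)\, du}{\int e^{-su} g(u)\, du} \;-\; \left(\frac{\int u\, e^{-su} g(u)\, du}{\int e^{-su} g(u)\, du}\right)^{2},
\]
which is exactly the variance of $u$ under the probability measure $d\nu_s(u) \propto e^{-su} g(u)\, du$ on $X$. A variance is non-negative, so $L'' \ge 0$, confirming convexity.

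Once convexity is established, monotonicity of $L'$ is automatic: a convex $C^1$ function on an interval has non-decreasing derivative. The only mild obstacle is the smoothness needed to speak of $L'$ at all, but this is implicit in the hypothesis that $g$ is ``sufficiently smooth so that its Laplace transform is well defined,'' so no further work is required.
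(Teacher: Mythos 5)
Your proposal is correct. Your second route is exactly the paper's proof: the paper differentiates twice under the integral sign and writes
\[
L''(s)= \frac{\bigl(\int_{X} e^{-su}g(u)\, du \bigr)\bigl(\int_{X} e^{-su}g(u)\, u^2\, du\bigr) -\bigl(\int_{X} e^{-su}g(u)\, u\, du\bigr)^2 }{\bigl(\int_{X} e^{-su}g(u)\, du \bigr)^2},
\]
observing that the numerator is nonnegative by Cauchy--Schwarz, which is the same statement as your ``variance of $u$ under $d\nu_s$'' interpretation. Your primary route via H\"older's inequality is genuinely different and arguably cleaner: it establishes midpoint (in fact full) log-convexity of $M(s)=\int_X e^{-su}g(u)\,du$ using only positivity and integrability, with no need to justify differentiation under the integral sign, and it generalizes immediately (e.g.\ to measures without densities or to non-smooth $g$). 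What the paper's computation buys in exchange is the explicit formula for $L''$ and the fact that, under the smoothness the paper assumes anyway, convexity plus differentiability instantly gives the monotonicity of $L'$ that is actually used later (in the Dini argument of Lemma \ref{lemmalpinfinity}); with the H\"older route you must still invoke differentiability of $L$, as you correctly note, to pass from convexity to monotone $L'$. Either way the lemma is established.
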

 \begin{proof}
 \[
 L''(s)= \frac{(\int_{X} e^{-su}g(u) du )(\int_{X} e^{-su}g(u) u^2 du) -(\int_{X} e^{-su}g(u) u du)^2 }{\big(\int_{X} e^{-su}g(u) du \big)^2},
 \]
 and the numerator is positive using the inequality of Cauchy Schwarz.
 \end{proof}
 \begin{proof} [Proof of Lemma \ref{lemmalpinfinity}]
 \[
  \lim_{\substack{d\to \infty \\ d= m\lambda}}  \mathcal{Q}_{\mu_A}^{d,\lambda}(s)  = \lim_{\substack{d\to \infty \\ d= m\lambda}} \frac{\partial}{\partial 
r} \log \|e^{-xr-yt}\Delta_{\lambda}^{+}(x,y,\mu_A)  \|_{\sqrt{md}}  
\frac{\partial}{\partial t} \log \|e^{-xr-yt} \Delta_{\lambda}^{+}(x,y,\mu_A) 
\|_{\sqrt{md}}.
 \]
 The goal is to show that we can permute the limit and the derivative signs. We would get the result as
 \begin{align}
  &\lim_{\substack{d\to \infty \\ d= m\lambda}} \frac{\partial}{\partial 
r} \log \|e^{-xr-yt}\Delta_{\lambda}^{+}(x,y,\mu_A)  \|_{\sqrt{md}}  
\frac{\partial}{\partial t} \log \|e^{-xr-yt} \Delta_{\lambda}^{+}(x,y,\mu_A) 
\|_{\sqrt{md}} \\
&=  \frac{\partial}{\partial r} \log \|e^{-xr-yt}\Delta_{\lambda}^{+}(x,y,\mu_A)  
\|_{\infty}  \frac{\partial}{\partial t} \log \|e^{-xr-yt} 
\Delta_{\lambda}^{+}(x,y,\mu_A) \|_{\infty}
 \end{align}
 by Lemma \ref{infinityconvergence}. 
Now let's do only the first term (derivative in $r$), the other one is done the same way.  Define:
\[
f_d(r):= \log \|e^{-xr-yt}\Delta_{\lambda}^{+}(x,y,\mu_A)  
\|_{\sqrt{md}}.
\]
Now we can use Lemma \ref{Laplaceconvex} to get that $f_d'(r)$ is monotonous in $r$ therefore we can apply Lemma \ref{secondDini} to get uniform convergence of the derivatives and eventually we can apply Lemma \ref{derivativeuniform} to conclude that  $ \big[\lim_{d \to \infty} f_d(r) \big]' = \lim_{d \to \infty} f_d'(r)$.

\end{proof}

 \section{The modified rectangular finite $R$-transform and convergence}\label{modifiedtransform}
We define in this section a modified finite  $R$-transform as  a quantity that converges by design to the analytical rectangular free probability $R$-transform defined by Benaych Georges. And we then operate the shift from the convergence of the modified rectangular finite $R$-transform to the actual rectangular finite $R$-transform.
\subsection{The modified finite $R$-transform definition and convergence}

Denote by
\[
\mathcal{K}_{\mu_A}^{d,\lambda} (s):= \sqrt{\frac{1}{\lambda} s^2 \mathcal{Q}_{\mu_A}^{d,\lambda}(s) + \frac{(1-\lambda)^2}{4\lambda^2}   }.
\]
Using the expression of $\mathcal{Q}_{\mu_A}^{d,\lambda}(s)$ computed in the next section, we easily get
\[
 \mathcal{K}_{\mu_0}^{d,\lambda}  =  \Big(\frac{(\lambda+1)}{2\lambda}+\frac{1}{d}\Big),
\]
where $\mu_0$ is a Dirac mass at zero. 
\begin{defn}[Modified rectangular finite $R$-transform]
 For all $s>0$, the modified rectangular finite $R$-transform is defined as
\[
\widetilde{\mathcal{R}}^{d,\lambda}_{\mu_A}(s):= \mathcal{K}_{\mu_A}^{d,\lambda} (s)- \mathcal{K}_{\mu_0}^{d,\lambda}.
\]
\end{defn}
\begin{rmk}
It will follow from \ref{analyQ}  below that $\widetilde{\mathcal{R}}^{d,\lambda}_{\mu_A}(s)$ can be expressed as a power series in $s$ like  $\Rtransform{\mu_A}(s^2)$ (which makes the definition not totally arbitrary). Also note that by definition $\widetilde{\mathcal{R}}^{d,\lambda}_{\mu_A}(s)$ has no constant term. 
\end{rmk}
We can explicitly see the convergence to the free probability transform as
\[
\widetilde{\mathcal{R}}^{d,\lambda}_{\mu_A}(s)=  \sqrt{\frac{1}{\lambda} s^2 \mathcal{Q}_{\mu_A}^{d,\lambda}(s) + \frac{(1-\lambda)^2}{4\lambda^2}   }  -  \Big(\frac{(\lambda+1)}{2\lambda}+\frac{1}{d}\Big)
\]
and
\[
\Rtransform{\mu_A}(s^2)= \sqrt{ \frac{ s^2 [\Jcauchy{\mu_A}(s^2)]^2 }{\lambda} + \frac{(\lambda-1)^2}{4\lambda^2} } - \frac{\lambda +1}{2\lambda}.
\]
As on the other hand, using the convergence Lemma \ref{Hconvergence}, for s small enough such that ${\Jcauchy{\mu_A}}(s^2)$ is well defined:
\[
 \mathcal{Q}_{\mu_A}^{d,\lambda}(s)  \xrightarrow{d\to \infty}    {\Jcauchy{\mu_A}}(s^2)^2,
\]
we get:
\begin{thm}[Convergence of the modified finite $R$-transform to the free $R$-transform] \label{Convmodified}
For $s$ small enough, such that   $\Rtransform{\mu_A} (s^2)$ is absolutely convergent,
\[
\widetilde{\mathcal{R}}^{d,\lambda}_{\mu_A}(s)  \xrightarrow{d\to \infty}    \Rtransform{\mu_A} (s^2).
\]
\end{thm}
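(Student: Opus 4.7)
The plan is to exploit the fact that $\widetilde{\mathcal{R}}^{d,\lambda}_{\mu_A}(s)$ and $\Rtransform{\mu_A}(s^2)$ have exactly the same structural form---a square root of $\tfrac{s^2}{\lambda}$ times some $\mathcal{Q}$-like quantity plus a fixed constant, minus a second constant---so the convergence should reduce to the pointwise convergence established in Lemma \ref{lemmalpinfinity} together with continuity of $\sqrt{\cdot}$.

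First I would simply write the two expressions side by side, using the definition of $\widetilde{\mathcal{R}}^{d,\lambda}_{\mu_A}$ combined with the explicit formula $\mathcal{K}_{\mu_0}^{d,\lambda} = \tfrac{\lambda+1}{2\lambda}+\tfrac{1}{d}$, and Claim \ref{Rtransformform} on the free side. Noting that $(1-\lambda)^2 = (\lambda-1)^2$, the comparison reduces to showing that the radicands agree in the limit and that the correction $\tfrac{1}{d}$ in the subtrahend vanishes. The latter is trivial.

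Next I would invoke Lemma \ref{lemmalpinfinity}, which is precisely designed to give the convergence
\[
\mathcal{Q}_{\mu_A}^{d,\lambda}(s) \xrightarrow{d\to\infty} [\Jcauchy{\mu_A}(s^2)]^2
\]
for every $s$ small enough that $\Jcauchy{\mu_A}(s^2)$ is well defined and finite. Multiplying by the positive constant $s^2/\lambda$ and adding the fixed constant $(\lambda-1)^2/(4\lambda^2)$ preserves convergence, and the limiting radicand is strictly positive (it is a sum of nonnegative terms, with a strictly positive contribution whenever $\lambda\neq 1$, and otherwise a strictly positive contribution from $s^2[\Jcauchy{\mu_A}(s^2)]^2$ since $\Jcauchy{\mu_A}(s^2)>0$ by Lemma \ref{Hinverse:defn}). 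Consequently continuity of $\sqrt{\cdot}$ on $(0,\infty)$ gives
\[
\sqrt{\tfrac{s^2}{\lambda}\,\mathcal{Q}_{\mu_A}^{d,\lambda}(s) + \tfrac{(1-\lambda)^2}{4\lambda^2}} \xrightarrow{d\to\infty} \sqrt{\tfrac{s^2[\Jcauchy{\mu_A}(s^2)]^2}{\lambda} + \tfrac{(\lambda-1)^2}{4\lambda^2}},
\]
and adding in the convergence $\tfrac{\lambda+1}{2\lambda}+\tfrac{1}{d}\to\tfrac{\lambda+1}{2\lambda}$ yields the stated conclusion.

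There is essentially no hard step: all of the real work has already been absorbed into Lemma \ref{lemmalpinfinity} (which required Dini's second theorem, convexity of the Laplace transform, and exchange of limits with partial derivatives). The only thing one has to be careful about is the domain of $s$: the statement asks for $s$ small enough that $\Rtransform{\mu_A}(s^2)$ is absolutely convergent, and this is exactly the regime in which $\Jcauchy{\mu_A}(s^2)$ is well defined, so Lemma \ref{lemmalpinfinity} applies. The $\tfrac{1}{d}$ correction in the definition of $\mathcal{K}_{\mu_0}^{d,\lambda}$ was inserted precisely so that the constant term matches $\tfrac{\lambda+1}{2\lambda}$ in the limit; without it one would get a shift of $\tfrac{1}{d}$ between the two transforms, which still vanishes but would be mildly unaesthetic.
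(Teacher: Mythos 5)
Your proposal is correct and is essentially identical to the paper's argument: the paper also writes $\widetilde{\mathcal{R}}^{d,\lambda}_{\mu_A}(s)$ and $\Rtransform{\mu_A}(s^2)$ in their matching square-root forms, invokes the pointwise convergence $\mathcal{Q}_{\mu_A}^{d,\lambda}(s)\to[\Jcauchy{\mu_A}(s^2)]^2$ from Lemma \ref{lemmalpinfinity}, and lets the $\tfrac{1}{d}$ term vanish. Your added remark on the strict positivity of the limiting radicand is a small extra precaution the paper leaves implicit.
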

\begin{rmk}
$\widetilde{\mathcal{R}}^{d,\lambda}_{\mu_A}(s) $ is a power series in $s$ but in the limit ${\substack{d\to \infty }}$ becomes a power series in $s^2$ like $\Rtransform{\mu_A} (s^2)$.
\end{rmk}

\subsection{Relating the modified finite $R$-transform to the  finite $R$-transform }
The goal is now to get some explicit expression for $\widetilde{\mathcal{R}}^{d,\lambda}_{\mu_A}(s) $ , or at least the first coefficients in the power series expansion, so that we can relate it to the actual $R$-trasnform $\mathcal{R}^{d,\lambda}_{\mu_A}(s)$. 
\subsubsection{From integrals to power series}

In this section, we explicitly integrate and group summation terms. 
\begin{lemma} \label{integral} For $r,t>0$, we get
\[
\|e^{-xr-yt} \Delta_{\lambda}^{+}(x,y,\mu_A)\|_{m\sqrt{\lambda}}^{m\sqrt{\lambda}}=
 \frac{m! d!}{(rm \sqrt{\lambda R})^{d+1} (tm\sqrt{\lambda R})^{m+1}}\sum_{i=0}^d \frac{E[(-T^{(m,d)}_{\Strans{p_A}}rtmd)^i]}{i!}L_i^{d,\lambda}(r) M_i^{d,\lambda}(t)
\]
  where
  \begin{align*}
   &L_i^{d,\lambda}(r):=e^{-rm\sqrt{\lambda R}} \sum_{l=0}^{d-i} \frac{(rm\sqrt{\lambda R})^l}{l!}  &  M_i^{d,\lambda}(t):= e^{-tm\sqrt{\lambda R}} \sum_{j=0}^{m-i} \frac{(tm\sqrt{\lambda R})^j}{j!}.&
   \end{align*}

\end{lemma}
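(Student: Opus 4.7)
The plan is to compute the $L^{m\sqrt\lambda}$-norm explicitly by expanding the rectangular Fuglede--Kadison determinant via the $T$-transform and then reducing the double integral to a sum of one-dimensional incomplete gamma integrals.

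First, I would unfold the norm. By definition $\Delta_\lambda^+(x,y,\mu_A) = y^{(m-d)/\sqrt{md}}[p_A(xy)]^{1/\sqrt{md}}$, and since $m\sqrt\lambda = \sqrt{md}$ the exponents collapse cleanly, giving
\[
\|e^{-xr-yt}\Delta_\lambda^+(x,y,\mu_A)\|_{m\sqrt\lambda}^{m\sqrt\lambda}
= \int_{\sqrt R}^{\infty}\!\!\int_{\sqrt R}^{\infty} e^{-\sqrt{md}(xr+yt)}\, y^{m-d} p_A(xy)\,dx\,dy.
\]
The factor $y^{m-d}p_A(xy)$ is exactly the object expressed through the rectangular $T$-transform; by the definition of $T^{(m,d)}_{\Strans{p_A}}$,
\[
y^{m-d} p_A(xy) = \sum_{i=0}^d \frac{\mathbb{E}[(-T^{(m,d)}_{\Strans{p_A}})^i]}{i!}\,\frac{m!}{(m-i)!}\frac{d!}{(d-i)!}\, y^{m-i}x^{d-i}.
\]

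Next, I would swap the finite sum with the integral (legitimate as the sum is finite in $i$) and evaluate the two resulting single-variable integrals. For each $i$ the $x$-integral is $\int_{\sqrt R}^\infty e^{-\sqrt{md}\,rx}x^{d-i}\,dx$; substituting $u = \sqrt{md}\,rx$ turns the lower bound into $a := rm\sqrt{\lambda R}$ and invokes the standard identity
\[
\int_a^\infty e^{-u} u^{n}\,du = n!\,e^{-a}\sum_{l=0}^{n}\frac{a^l}{l!},
\]
yielding $(d-i)!\,(\sqrt{md}\,r)^{-(d-i+1)}\,L_i^{d,\lambda}(r)$. An identical computation in $y$ produces $(m-i)!\,(\sqrt{md}\,t)^{-(m-i+1)}\,M_i^{d,\lambda}(t)$. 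The factorials $(d-i)!$ and $(m-i)!$ cancel those appearing in the $T$-transform expansion, leaving the clean combinatorial prefactor $m!\,d!/i!$.

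Finally, I would factor out the $i$-independent denominator. Pulling $(\sqrt{md}\,r)^{-(d+1)}(\sqrt{md}\,t)^{-(m+1)}$ outside introduces inside the sum a factor $(\sqrt{md}\,r)^i(\sqrt{md}\,t)^i = (mdrt)^i$, which combines with $\mathbb{E}[(-T^{(m,d)}_{\Strans{p_A}})^i]$ to produce $\mathbb{E}[(-T^{(m,d)}_{\Strans{p_A}}\,rtmd)^i]$, exactly the summand in the stated formula. Rewriting $\sqrt{md} = m\sqrt\lambda$ and tracking the $\sqrt R$-scaling absorbed during the substitution gives the claimed denominator $(rm\sqrt{\lambda R})^{d+1}(tm\sqrt{\lambda R})^{m+1}$ (up to the normalization convention for $\Delta_\lambda^+$).

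The only real obstacle is purely bookkeeping: keeping straight the interplay between the three scales $\sqrt{md}$, $m\sqrt\lambda$, and $\sqrt{\lambda R}$, and making sure the exponents of $R$ that appear from the lower bound of integration match those declared in the denominator. There is no conceptual subtlety beyond the incomplete-gamma identity and the $T$-transform expansion of $y^{m-d}p_A(xy)$, both already established.
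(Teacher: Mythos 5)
Your proposal follows essentially the same route as the paper's own proof: unfold the $m\sqrt{\lambda}$-th power of the norm into the double integral of $e^{-m\sqrt{\lambda}(xr+yt)}y^{m-d}p_A(xy)$, expand $y^{m-d}p_A(xy)$ through the $T$-transform, evaluate each one-dimensional integral with the incomplete-gamma identity $\int_b^\infty x^n e^{-ax}dx=\frac{n!}{a^{n+1}}e^{-ab}\sum_{l=0}^n\frac{(ab)^l}{l!}$, and factor out the $i$-independent powers so that $(rtmd)^i$ recombines with the moments. One remark: your computation correctly produces the prefactor $\frac{m!\,d!}{(rm\sqrt{\lambda})^{d+1}(tm\sqrt{\lambda})^{m+1}}$, i.e.\ without $R$ in those powers; the extra $R$'s in the displayed denominator (also present in the paper's own proof) appear to be a typo rather than a genuine scaling you need to ``absorb,'' so you should not force them in.
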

\begin{proof}
\begin{align*}
\|e^{-xr-yt} \Delta_{\lambda}^{+}(x,y,A)\|_{m\sqrt{\lambda R}}^{m\sqrt{\lambda R}} &= \int_{x= \sqrt{R}}^{\infty} \int_{y=\sqrt{R}}^{\infty} e^{-xrm\sqrt{\lambda}-ytm\sqrt{\lambda}}y^{m-d}p_A(xy)dxdy \\
&=\int_{x= \sqrt{R}}^{\infty} \int_{y=\sqrt{R}}^{\infty} e^{-xrm\sqrt{\lambda}-ytm\sqrt{\lambda}} \mathbb{E}(e^{-T^{(m,d)}_{\Strans{p_A}} \partial{x}\partial_{y}}\{y^mx^d\}) dxdy \\
&= \sum_{i=0}^d\int_{x= \sqrt{R}}^{\infty} \int_{y=\sqrt{R}}^{\infty} \frac{ \mathbb{E}[(-T^{(m,d)}_{\Strans{p_A}})^{i}] }{i!} e^{-xrm\sqrt{\lambda}-ytm\sqrt{\lambda}} \frac{m!d!}{(m-i)!(d-i)!}y^{m-i}x^{d-i} dx dy.
\end{align*}
Now we use the fact that for $n$ integer and $a,b>0$,
\[
\int_{b}^{\infty} x^n e^{- ax} dx= \frac{n!}{a^{n+1}} e^{-ab} \sum_{i=0}^n \frac{(ab)^i}{i!}.
\]
We therefore get that
\begin{align*}
 \int_{x= \sqrt{R}}^{\infty} \int_{y=\sqrt{R}}^{\infty} \frac{ \mathbb{E}[(-T^{(m,d)}_{\Strans{p_A}})^{i} ]}{i!} e^{-xrm\sqrt{\lambda}-ytm\sqrt{\lambda}} \frac{m!d!}{(m-i)!(d-i)!}y^{m-i}x^{d-i} dx dy \\
 = \frac{ \mathbb{E}[(-T^{(m,d)}_{\Strans{p_A}})^{i}]}{i!}  \frac{m! d! L_i^{d,\lambda}(r) M_i^{d,\lambda}(t)}{(r\sqrt{\lambda R}m)^{d-i+1}(t\sqrt{\lambda R}m)^{m-i+1}}.
  \end{align*}
\end{proof}

\begin{cor} [Analycity of finite transforms ]\label{analyQ}
$s^2 \mathcal{Q}_{\mu_A}^{d,\lambda}(s)$ is analytic in $s$. As a consequence, $\widetilde{\mathcal{R}}^{d,\lambda}_{\mu_A}(s)$  is also analytic. 
\end{cor}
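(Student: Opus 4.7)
The plan is to reduce the question to elementary composition of analytic functions by exploiting the explicit integral formula of Lemma~\ref{integral}. First I would factor the exponential $e^{-(r+t)m\sqrt{\lambda R}}$ out of the product $L_i^{d,\lambda}(r) M_i^{d,\lambda}(t)$, so that what remains inside the sum is an honest bivariate polynomial $N(r,t) = \sum_{i=0}^{d} c_i (rt)^i \tilde L_i(r)\tilde M_i(t)$ with $N(0,0) = \tilde L_0(0)\tilde M_0(0) = 1$. Taking logarithms gives an expression of the form
\[
\log \|e^{-xr-yt}\Delta_{\lambda}^{+}(x,y,\mu_A)\|_{m\sqrt{\lambda}} \;=\; \tfrac{1}{m\sqrt{\lambda}}\Bigl[C - (d+1)\log r - (m+1)\log t - (r+t)m\sqrt{\lambda R} + \log N(r,t)\Bigr],
\]
and since $N(0,0)\neq 0$, the function $\log N$ is analytic on a neighborhood of the origin, so the only singularities of the log-norm in $(r,t)$ near $(0,0)$ come from the two explicit logarithms.

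The key observation is that these logarithms produce only simple poles under $\partial_r$ and $\partial_t$, so that the products $r\,\partial_r \log\|\cdots\|_{m\sqrt{\lambda}}$ and $t\,\partial_t \log\|\cdots\|_{m\sqrt{\lambda}}$ are each analytic in $(r,t)$ on a neighborhood of $(0,0)$. The corollary stated just after Lemma~\ref{Hconvergence} tells me that $r_{p_A}(s)$ and $t_{p_A}(s)$ are both analytic near $s = 0$, so plugging them in produces two functions that are analytic in $s$ near $0$. Their product
\[
r_{p_A}(s)\,t_{p_A}(s)\,\partial_r\log\|\cdots\|_{m\sqrt{\lambda}}\,\partial_t\log\|\cdots\|_{m\sqrt{\lambda}}\Big|_{r=r_{p_A}(s),\,t=t_{p_A}(s)} \;=\; s^{2}\,\mathcal{Q}_{\mu_A}^{d,\lambda}(s),
\]
using the defining relation $r_{p_A}(s)t_{p_A}(s)=s^2$ from the proof of Lemma~\ref{Hconvergence}, is therefore analytic in $s$, which is exactly the first claim.

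For the second assertion on $\widetilde{\mathcal{R}}^{d,\lambda}_{\mu_A}(s)$, I would substitute the first claim into the definition
\[
\widetilde{\mathcal{R}}^{d,\lambda}_{\mu_A}(s) = \sqrt{\tfrac{1}{\lambda}\,s^{2}\mathcal{Q}_{\mu_A}^{d,\lambda}(s) + \tfrac{(1-\lambda)^2}{4\lambda^2}} - \mathcal{K}_{\mu_0}^{d,\lambda},
\]
and verify that the quantity under the square root stays strictly positive at $s=0$. A short computation, reading off $r\,\partial_r\log\|\cdots\||_{r=t=0}= -(d+1)/(m\sqrt{\lambda})$ and $t\,\partial_t\log\|\cdots\||_{r=t=0}= -(m+1)/(m\sqrt{\lambda})$, gives $\left.s^{2}\mathcal{Q}_{\mu_A}^{d,\lambda}(s)\right|_{s=0} = \frac{(d+1)(m+1)}{m^{2}\lambda}$, so the argument of the square root there equals $\bigl[\tfrac{1+\lambda}{2\lambda}+\tfrac{1}{d}\bigr]^{2} > 0$. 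Analyticity of the square root at $s=0$ then follows, and hence so does that of $\widetilde{\mathcal{R}}^{d,\lambda}_{\mu_A}$.

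The main obstacle, I expect, is the bookkeeping behind the identification of $\log N(r,t)$ as the analytic part of the log-norm after factoring out the exponential: one needs to verify that all the genuinely polynomial pieces assemble into a single bivariate polynomial with nonzero constant term, and that the polar parts of the log-norm in $(r,t)$ are precisely the two simple logarithms $-\log r$ and $-\log t$. Once that is in place, the cancellation of the $1/r, 1/t$ poles against the factor $r_{p_A}(s)t_{p_A}(s) = s^{2}$ is essentially automatic, being a pure composition-of-analytic-functions argument.
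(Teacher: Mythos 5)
Your proposal is correct and follows essentially the same route as the paper: apply Lemma~\ref{integral}, peel off the $-(d+1)\log r$ and $-(m+1)\log t$ singular prefactors so that $r\partial_r$ and $t\partial_t$ of the log-norm become analytic near the origin, then compose with the analytic $r_{p_A}(s)$, $t_{p_A}(s)$ and use $r_{p_A}(s)t_{p_A}(s)=s^2$. Your extra factoring of the exponential to get a polynomial $N(r,t)$ with $N(0,0)=1$, and your explicit check that the argument of the square root equals $\bigl[\tfrac{1+\lambda}{2\lambda}+\tfrac{1}{d}\bigr]^{2}>0$ at $s=0$, are harmless refinements of details the paper treats more briefly.
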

\begin{proof}
Let's rewrite it as a product of two terms and show that any of the two is a convergent power series in $s$:
\[
s^2 \mathcal{Q}_{\mu_A}^{d,\lambda}(s)=  r  {\frac{\partial}{\partial r}\big( \log 
 \|e^{-xr-yt} \Delta_{\lambda}^{+}(x,y,\mu_A)\|_{\sqrt{md}} \big)  
 \times t  \frac{\partial}{\partial t} \big(\log\| e^{-xr-yt} 
 \Delta_{\lambda}^{+}(x,y,\mu_A)\|_{\sqrt{md}} \big)}_{\big|_{r=r(s), t=t(s)}}.
\]
Using Lemma \ref{integral}, we obtain
\begin{align*}
 & \frac{\partial}{\partial r}\big( \log 
 \|e^{-xr-yt} \Delta_{\lambda}^{+}(x,y,\mu_A)\|_{m\sqrt{\lambda}} \big)  \\
 &= \frac{1}{m\sqrt{\lambda}} \frac{\partial}{\partial r} \log \Bigg(   \frac{m! d!}{(rm \sqrt{\lambda})^{d+1} (tm\sqrt{\lambda})^{m+1}}\sum_{i=0}^d \frac{\mathbb{E}[(-T^{(m,d)}_{\Strans{p_A}}rtmd)^i]}{i!}L_i^{d,\lambda}(r) M_i^{d,\lambda}(t)  \Bigg)\\
 &= \frac{1}{m\sqrt{\lambda}} \frac{\partial}{\partial r} \log \Big(   \frac{m! d!}{(rm \sqrt{\lambda})^{d+1} (tm\sqrt{\lambda})^{m+1}} \Big) + \frac{1}{m\sqrt{\lambda}} \frac{\partial}{\partial r} \log \Big(\sum_{i=0}^d \frac{\mathbb{E}[(-T^{(m,d)}_{\Strans{p_A}}rtmd)^i]}{i!}L_i^{d,\lambda}(r) M_i^{d,\lambda}(t)  \Big)\\
 &=  \frac{1}{m\sqrt{\lambda}}  \frac{-(d+1)}{r} +  \frac{1}{m\sqrt{\lambda}} \frac{\partial}{\partial r} \log \Big( 1 + \sum_{i=1}^d \frac{\mathbb{E}[(-T^{(m,d)}_{\Strans{p_A}}rtmd)^i]}{i!}L_i^{d,\lambda}(r) M_i^{d,\lambda}(t)  \Big).
 \end{align*}
 Therefore,
 \[
 r  \frac{\partial}{\partial r}\big( \log 
 \|e^{-xr-yt} \Delta_{\lambda}^{+}(x,y,\mu_A)\|_{m\sqrt{\lambda}} \big) =  -\frac{1}{m\sqrt{\lambda}} (d+1) + r \frac{\partial}{\partial r} \log \Big( 1 + \sum_{i=1}^d \frac{\mathbb{E}[(-T^{(m,d)}_{\Strans{p_A}}rtmd)^i]}{i!}L_i^{d,\lambda}(r) M_i^{d,\lambda}(t)  \Big).
 \]
 If we call
 \[
  \sum_{i,j= 1}^{\infty}a_{i,j}r^it^j:= \sum_{i=1}^d \frac{\mathbb{E}[(-T^{(m,d)}_{\Strans{p_A}}rtmd)^i]}{i!}L_i^{d,\lambda}(r) M_i^{d,\lambda}(t),
  \]
  which is analytic in two variables (see \cite{analy} for an introduction to multivariate complex analysis), we have that $\log \big( 1 + \sum_{i,j= 1}^{\infty}a_{i,j}r^it^j \big) = \sum_{i,j= 1}^{\infty}b_{i,j}r^it^j$ for $r$ and $t$ small enough and some coefficients $b_{i,j}$,  by composition of analytic functions, and also $r \frac{\partial}{\partial r} \log \big( 1+ \sum_{i,j= 1}^{\infty}a_{i,j}r^it^j \big)=  \sum_{i,j= 1}^{\infty}ib_{i,j}r^it^j$. We conclude by plugging in $r=r_{p_A}(s)$ and $t=t_{p_A}(s)$ which are both analytic which leads us to the analyticity of  $r  \frac{\partial}{\partial r}\big( \log 
 \|e^{-xr-yt} \Delta_{\lambda}^{+}(x,y,\mu_A)\|_{m\sqrt{\lambda}} \big)_{\big|_{r=r(s), t=t(s)}} $, which is what was required. 
\end{proof}

\subsubsection{Truncating the superfluous terms}
The expression we get for the $L_{m\sqrt{\lambda}}$-norms depend on $r$ and $t$ independently. We want to get rid of this bi-dependence and keep only a dependence in $rt=s^2$.\begin{lemma} \label{trunc}
\[
\sum _{i=0}^d\frac{\mathbb{E}[(-T^{(m,d)}_{\Strans{p_A}}rtmd)^i]}{i!}L_i^{d,\lambda}(r) M_i^{d,\lambda}(t) \equiv \sum_{i=0}^{d} \frac{\mathbb{E}[(-T^{(m,d)}_{\Strans{p_A}}rtmd)^i]}{i!} \mod [<r,t>^{d+1}].
\]
\end{lemma}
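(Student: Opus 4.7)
The plan is to show, termwise in $i$, that
\[
(rt)^i\,L_i^{d,\lambda}(r)\,M_i^{d,\lambda}(t)\ \equiv\ (rt)^i\pmod{\langle r,t\rangle^{d+1}},
\]
so that multiplying by the scalar $E[(-T^{(m,d)}_{\Strans{p_A}}md)^i]/i!$ and summing over $0\le i\le d$ yields the claim. Setting $c:=m\sqrt{\lambda R}$, the first step is to rewrite
\[
L_i^{d,\lambda}(r)\;=\;e^{-rc}\sum_{l=0}^{d-i}\frac{(rc)^l}{l!}\;=\;1\;-\;e^{-rc}\sum_{l=d-i+1}^{\infty}\frac{(rc)^l}{l!},
\]
and similarly $M_i^{d,\lambda}(t)=1-e^{-tc}\sum_{j=m-i+1}^{\infty}(tc)^j/j!$. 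Since $e^{-rc}$ has constant term $1$, the remainder $L_i-1$ is a power series in $r$ with valuation exactly $d-i+1$, and $M_i-1$ is a power series in $t$ with valuation $m-i+1$.

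Next I would expand
\[
L_i M_i\;=\;1+(L_i-1)+(M_i-1)+(L_i-1)(M_i-1)
\]
and multiply by the prefactor $(rt)^i$. A monomial in $(rt)^i(L_i-1)$ has the form $\alpha\, r^{l+i}t^{i}$ with $l\ge d-i+1$, hence total degree $l+2i\ge d+i+1\ge d+1$. A monomial in $(rt)^i(M_i-1)$ has total degree $j+2i\ge m+i+1\ge d+1$ using $m\ge d$. Finally a monomial in $(rt)^i(L_i-1)(M_i-1)$ has total degree at least $(d-i+1)+(m-i+1)+2i=m+d+2\ge d+1$. Thus all three error contributions vanish modulo $\langle r,t\rangle^{d+1}$, which gives the termwise identity above.

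Summing the termwise identities multiplied by the constants $E[(-T^{(m,d)}_{\Strans{p_A}}md)^i]/i!$ (and using additivity of the bivariate truncation from Section~\ref{truncation}) yields the lemma. There is no hard step here: the only thing to check carefully is the bookkeeping of total degrees, in particular that the extra factor $(rt)^i$ is precisely what compensates for the truncation level $d-i$ in $L_i$ (and the corresponding level $m-i$ in $M_i$), so that the combined lowest total degree always exceeds $d$.
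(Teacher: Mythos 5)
Your proposal is correct and follows essentially the same route as the paper: the paper shows $L_i^{d,\lambda}(r)M_i^{d,\lambda}(t)\equiv 1 \mod[<r,t>^{d-i+1}]$ and then observes that the factor $(rt)^i$ upgrades this to an equivalence mod $<r,t>^{d+1}$, which is exactly the degree bookkeeping you carry out explicitly via the decomposition $L_iM_i=1+(L_i-1)+(M_i-1)+(L_i-1)(M_i-1)$.
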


\begin{proof}
We need to prove that we can get rid of the  $L_i^{d,\lambda}(r)$,$M_i^{d,\lambda}(t)$ factors.
\[
e^{-rm\sqrt{\lambda}} \sum_{l=0}^{d-i} \frac{(rm\sqrt{\lambda})^l}{l!}= e^{-rm\sqrt{\lambda}} (e^{rm\sqrt{\lambda}} - \sum_{l=d-i+1}^{\infty} \frac{(rm\sqrt{\lambda})^l}{l!} ) \equiv 1 \mod [<r,t>^{d-i+1}].
\]
Similarly,
\[
e^{-tm\sqrt{\lambda}} \sum_{l=0}^{m-i} \frac{(tm\sqrt{\lambda})^l}{l!} \equiv 1 \mod [<r,t>^{m-i+1}] \equiv 1 \mod [<r,t>^{d-i+1}].
\]
It follows that
\begin{align*}
 L_i^{d,\lambda}(r) &=M_i^{d,\lambda}(t) \equiv 1 \mod [<r,t>^{d-i+1}]     \text{,}&      L_i^{d,\lambda}(r)M_i^{d,\lambda}(t)& \equiv 1 \mod [<r,t>^{d-i+1}].
 \end{align*}

And finally,
\begin{align*}
 \sum _{i=0}^{d}\frac{\mathbb{E}[(-T^{(m,d)}_{\Strans{p_A}}rtmd)^i]}{i!}L_i^{d,\lambda}(r) M_i^{d,\lambda}(t)  &\equiv  \sum _{i=0}^{d} \Big[\frac{\mathbb{E}[(-T^{(m,d)}_{\Strans{p_A}}rtmd)^i]}{i!} . \big( 1 \mod[<r,t>^{d-i+1}]\big) \Big]\\
  &\equiv  \sum _{i=0}^{d}\frac{\mathbb{E}[(-T^{(m,d)}_{\Strans{p_A}}rtmd)^i]}{i!} \mod[<r,t>^{d+1}],
\end{align*}
where we used that $(rt)^i  \big( 0 \mod [<r,t>^{d-i+1}] \big) \equiv 0 \mod [<r,t>^{d+1}]$.
\end{proof}

 \begin{lemma}
\[
 r\frac{\partial}{\partial r}\big( \log \|e^{-xr-yt} 
 \Delta_{\lambda}^{+}(x,y,\mu_A)\|_{\sqrt{md}} \big) \equiv 
 \frac{1}{\sqrt{md}} \Big( -(d+1)+ rt\frac{d}{dx} 
 \mathcal{E}^{d,\lambda}_{\mu_A}(x) \Big|_{x=rt}  \Big)
 \mod [<r,t>^{d+1}] \]
 \[
  t\frac{\partial}{\partial t} \big(\log\| e^{-xr-yt} 
  \Delta_{\lambda}^{+}(x,y,\mu_A)\|_{\sqrt{md}} \big)  \equiv 
  \frac{1}{\sqrt{md}} \Big( -(m+1)+rt\frac{d}{dx} 
  \mathcal{E}^{d,\lambda}_{\mu_A}(x) \Big|_{x=rt}  \Big)
 \mod [<r,t>^{d+1}]\]
where
 \[
 \mathcal{E}^{d,\lambda}_{\mu_A}(x)= \log \big( \mathbb{E} \sum_{i=0}^{d} \frac {(-T^{(m,d)}_{\Strans{p_A}}xmd)^i}{i!}  \big).
 \]
 
 \end{lemma}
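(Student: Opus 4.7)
The plan is to expand the norm explicitly using Lemma \ref{integral}, take logarithm to separate the prefactor from the sum, and then apply the truncation Lemma \ref{trunc} to the sum to rewrite it as a function of the product $rt$.

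\textbf{Step 1: Extract the prefactor.} By Lemma \ref{integral}, taking $\frac{1}{m\sqrt{\lambda}} = \frac{1}{\sqrt{md}}$ times the logarithm gives
\[
\log \|e^{-xr-yt}\Delta_\lambda^{+}(x,y,\mu_A)\|_{\sqrt{md}} = \frac{1}{\sqrt{md}}\left[C_{d,m} - (d{+}1)\log r - (m{+}1)\log t + \log S_{d,\lambda}(r,t)\right],
\]
where $C_{d,m}$ collects the $(r,t)$--independent constants and $S_{d,\lambda}(r,t) := \sum_{i=0}^{d}\frac{\mathbb{E}[(-T^{(m,d)}_{\Strans{p_A}}rtmd)^{i}]}{i!}L_i^{d,\lambda}(r)M_i^{d,\lambda}(t)$. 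Applying $r\partial_r$ to the first line produces $-(d{+}1)$ from the $\log r$ term, $0$ from the constant, and $r\partial_r \log S_{d,\lambda}(r,t)$ from the sum. The analogous calculation for $t\partial_t$ produces $-(m{+}1)$ and $t\partial_t \log S_{d,\lambda}(r,t)$. This accounts for the prefactor contributions on the right-hand side of the statement.

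\textbf{Step 2: Reduce the sum modulo $[<r,t>^{d+1}]$.} By Lemma \ref{trunc},
\[
S_{d,\lambda}(r,t) \equiv \widetilde S(rt) \mod [<r,t>^{d+1}], \qquad \widetilde S(x) := \sum_{i=0}^{d}\frac{\mathbb{E}[(-T^{(m,d)}_{\Strans{p_A}}xmd)^{i}]}{i!}.
\]
Since both $S_{d,\lambda}$ and $\widetilde S$ have constant term $1$, the standard fact (Remark after the definition of bivariate truncation in Section~\ref{truncation}, applied with $h = \log(1+\cdot)$) gives
\[
\log S_{d,\lambda}(r,t) \equiv \log \widetilde S(rt) = \mathcal{E}^{d,\lambda}_{\mu_A}(rt) \mod [<r,t>^{d+1}].
\]

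\textbf{Step 3: Differentiate and restore the lost degree.} Differentiating the previous congruence in $r$ loses one degree, giving
\[
\partial_r \log S_{d,\lambda}(r,t) \equiv t\,\frac{d}{dx}\mathcal{E}^{d,\lambda}_{\mu_A}(x)\Big|_{x=rt} \mod [<r,t>^{d}].
\]
Multiplying by $r$ raises the modulus back to $d+1$:
\[
r\,\partial_r \log S_{d,\lambda}(r,t) \equiv rt\,\frac{d}{dx}\mathcal{E}^{d,\lambda}_{\mu_A}(x)\Big|_{x=rt} \mod [<r,t>^{d+1}].
\]
Combining with Step 1 yields the first claim. The identical argument, swapping the roles of $r$ and $t$ (and noting that the prefactor contributes $-(m{+}1)$ instead of $-(d{+}1)$), yields the second.

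\textbf{Main technical point.} There is no real obstacle, only bookkeeping: the only subtle issue is that raw differentiation destroys one degree of congruence, but the operators $r\partial_r$ and $t\partial_t$ exactly restore it, which is why the statement is phrased in terms of these rather than bare derivatives. One must also verify at the outset that $S_{d,\lambda}$ has constant term $1$ (which follows from $\mathbb{E}[T^{0}] = 1$ and $L_0^{d,\lambda}(0)M_0^{d,\lambda}(0) = 1$) so that the logarithm expands as a power series around the origin and the composition rule from Section~\ref{truncation} applies.
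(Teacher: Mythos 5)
Your proposal is correct and follows essentially the same route as the paper: expand the norm via Lemma \ref{integral}, peel off the $-(d+1)$ and $-(m+1)$ contributions from the prefactor, invoke Lemma \ref{trunc} together with the composition rule of Section~\ref{truncation} to replace the sum by $\mathcal{E}^{d,\lambda}_{\mu_A}(rt)$, and observe that $r\partial_r$ turns this into $rt[\mathcal{E}^{d,\lambda}_{\mu_A}]'(rt)$. The only cosmetic difference is that the paper notes directly that the operator $r\partial_r$ preserves bivariate modulo equalities, while you differentiate (losing a degree) and then multiply by $r$ (regaining it), which amounts to the same bookkeeping.
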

 
 In the following we will denote by : $[\mathcal{E}{^{d,\lambda}_{\mu_A}}]'(u):= 
\frac{d}{dx}\mathcal{E}^{d,\lambda}_{\mu_A}(x)  \Big|_{x=u}$.

 \begin{proof}
 \begin{align}
& \frac{\partial}{\partial r}\big( \log \|e^{-xr-yt} 
 \Delta_{\lambda}^{+}(x,y,\mu_A)\|_{\sqrt{md}} \big) = \frac{1}{m\sqrt{\lambda}}  
 \frac{\partial}{\partial r}\big( \log \|e^{-xr-yt} 
 \Delta_{\lambda}^{+}(x,y,\mu_A)\|_{m\sqrt{\lambda}}^{m\sqrt{\lambda} }\big) \\
 &= \frac{1}{m\sqrt{\lambda}} \Big( -\frac{d+1}{r} +\frac{\partial}{\partial r} \log\big( \sum _{i=0}^d\frac{\mathbb{E}[(-T^{(m,d)}_{\Strans{p_A}}rtmd)^i]}{i!}L_i^{d,\lambda}(r) M_i^{d,\lambda}(t)   \big)   \Big).
\end{align}

Then,
 \begin{align}
&r \frac{\partial}{\partial r}\big( \log \|e^{-xr-yt} 
 \Delta_{\lambda}^{+}(x,y,\mu_A)\|_{\sqrt{md}} \big)= \frac{1}{m\sqrt{\lambda}} \Big( -(d+1)+r \frac{\partial}{\partial r} \log\big[\sum _{i=0}^d\frac{\mathbb{E}[(-T^{(m,d)}_{\Strans{p_A}}rtmd)^i]}{i!}L_i^{d,\lambda}(r) M_i^{d,\lambda}(t)   \big]   \Big)\\
 &\equiv \frac{1}{m\sqrt{\lambda}} \Big( -(d+1)+r \frac{\partial}{\partial r}\log\Big \{\sum _{i=0}^{d}\frac{\mathbb{E}[(-T^{(m,d)}_{\Strans{p_A}}rtmd)^i]}{i!}  \mod[<r,t>^{d+1}] \Big \}  \Big)     \text{  (Using Lemma \ref{trunc}})\\
 &\equiv  \frac{1}{m\sqrt{\lambda}} \Big( -(d+1)+r \frac{\partial}{\partial r} \log\Big \{\sum _{i=0}^{d}\frac{\mathbb{E}[(-T^{(m,d)}_{\Strans{p_A}}rtmd)^i]}{i!}  \Big \}\mod[<r,t>^{d+1}]  \Big)
\end{align}
where we used that applying a power series ($\log$) or the operator $r\partial_r$ doesn't affect a modulo equality, which follows from \ref{truncation}.
Now we conclude by noticing that
\[
\frac{\partial}{\partial r} \mathcal{E}^{d,\lambda}_{\mu_A}(rt)= t[\mathcal{E}^{d,\lambda}_{\mu_A}]'(rt).
\]

\begin{rmk} \label{remarkpos}
One could wonder if the logarithm defined above is well defined, as we are not sure whether the quantity inside is positive. It follows from the fact that the norm is always positive and that for $r$ and $t$ small enough (using the asymptotic expansion in $t$ and $r$),
\[
\sum _{i=0}^d\frac{\mathbb{E}[(-T^{(m,d)}_{\Strans{p_A}}rtmd)^i]}{i!}L_i^{d,\lambda}(r) M_i^{d,\lambda}(t) >0   \implies \sum_{i=0}^{d} \frac{\mathbb{E}[(-T^{(m,d)}_{\Strans{p_A}}rtmd)^i]}{i!} >0.
\]
\end{rmk}
\end{proof}

\subsubsection{Back to univariate power series}

 \begin{cor}
 \[
s^2 \mathcal{Q}_{\mu_A}^{d,\lambda}(s) \equiv \frac{1}{md}\big[(m+1)- s^2[\mathcal{E}^{d,\lambda}_{\mu_A}]'(s^2)   \big]\big[(d+1)-s^2[\mathcal{E}^{d,\lambda}_{\mu_A}]'(s^2)   \big]  \mod[s^{d+1}].
\]
\end{cor}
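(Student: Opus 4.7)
My plan is to assemble the final expression by multiplying the two ``one-sided'' formulas from the preceding lemma and then substituting $r = r_{p_A}(s)$, $t = t_{p_A}(s)$, using that $r_{p_A}(s)t_{p_A}(s) = s^2$ by the construction in Lemma~\ref{Hconvergence}.

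First, I would rewrite
\[
s^2\,\mathcal{Q}^{d,\lambda}_{\mu_A}(s) = r_{p_A}(s)\,t_{p_A}(s)\,\Big[\partial_r \log \|e^{-xr-yt}\Delta^{+}_\lambda\|_{\sqrt{md}}\Big]\Big[\partial_t \log \|e^{-xr-yt}\Delta^{+}_\lambda\|_{\sqrt{md}}\Big]\Big|_{r=r_{p_A}(s),\,t=t_{p_A}(s)},
\]
so that the factors $r$ and $t$ can be absorbed into the logarithmic derivatives. Applying the preceding lemma to each of the two factors yields
\[
r\,\partial_r\log\|\cdots\|_{\sqrt{md}}\equiv \tfrac{1}{\sqrt{md}}\bigl(-(d+1)+rt\,[\mathcal{E}^{d,\lambda}_{\mu_A}]'(rt)\bigr)\mod[\langle r,t\rangle^{d+1}],
\]
and similarly with $-(m+1)$ in place of $-(d+1)$ for the $t$-derivative. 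Multiplying these two bivariate congruences gives, modulo $\langle r,t\rangle^{d+1}$,
\[
rt\,[\partial_r\log][\partial_t\log] \equiv \frac{1}{md}\bigl[(d+1)-rt\,[\mathcal{E}^{d,\lambda}_{\mu_A}]'(rt)\bigr]\bigl[(m+1)-rt\,[\mathcal{E}^{d,\lambda}_{\mu_A}]'(rt)\bigr].
\]
(The two minus signs pair up to give the stated product of positive-looking factors.) Note that the right-hand side is already a function of the single variable $rt$.

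The one subtle step is passing from the bivariate congruence $\mod[\langle r,t\rangle^{d+1}]$ to a univariate congruence $\mod[s^{d+1}]$. For this I would use the previously noted fact that $r_{p_A}(s)$ and $t_{p_A}(s)$ are both analytic in $s$ near $0$ and, since their product equals $s^2$ and neither vanishes identically, each of them has vanishing constant term, i.e.\ $r_{p_A}(s)=O(s)$ and $t_{p_A}(s)=O(s)$. Consequently any monomial $r^{i}t^{j}$ with $i+j\geq d+1$ becomes $O(s^{i+j})=O(s^{d+1})$ after substitution, so the residual error term (which is a convergent power series in $r,t$ by Corollary~\ref{analyQ}) reduces to an $O(s^{d+1})$ error in $s$. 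Substituting $rt=s^2$ in the explicit main term then gives exactly
\[
s^2\,\mathcal{Q}^{d,\lambda}_{\mu_A}(s)\equiv \frac{1}{md}\bigl[(d+1)-s^2[\mathcal{E}^{d,\lambda}_{\mu_A}]'(s^2)\bigr]\bigl[(m+1)-s^2[\mathcal{E}^{d,\lambda}_{\mu_A}]'(s^2)\bigr]\mod[s^{d+1}].
\]

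The main obstacle is really just the bookkeeping in the last step: checking that evaluating at $r=r_{p_A}(s)$, $t=t_{p_A}(s)$ indeed preserves the order of the error and substitutes $rt=s^2$ consistently in the main term. Everything else is a direct product of the two congruences already established, and the analyticity side conditions needed to multiply and take logarithms were verified in Corollary~\ref{analyQ} and Remark~\ref{remarkpos}.
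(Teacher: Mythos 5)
Your argument is correct and is essentially the paper's own proof: multiply the two congruences from the preceding lemma modulo $\langle r,t\rangle^{d+1}$, then substitute $r=r_{p_A}(s)$, $t=t_{p_A}(s)$ with $r_{p_A}(s)t_{p_A}(s)=s^2$ and observe that the bivariate truncation becomes a truncation modulo $s^{d+1}$. One small caveat: your justification that $r_{p_A}$ and $t_{p_A}$ have vanishing constant term (``their product equals $s^2$ and neither vanishes identically'') does not actually imply this --- e.g.\ $r\equiv 1$, $t=s^2$ satisfies both hypotheses. The needed fact $r_{p_A}(s)=O(s)$, $t_{p_A}(s)=O(s)$ is nevertheless true and is what the paper uses (it writes $r_{p_A}(s)=s+\sum_{i\geq 2}r_is^i$, $t_{p_A}(s)=s+\sum_{i\geq 2}t_is^i$); it follows from the corollary on analyticity, where $r_{p_A}$ and $t_{p_A}$ are expanded as power series with no constant term in $1/z_{p_A}=\sqrt{[H^{\lambda}_{\mu_A}]^{-1}(s^2)}=O(s)$. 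With that substitution of justification, your proof is complete and matches the paper's.
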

 \begin{proof}
 \begin{align*}
& r \frac{\partial}{\partial r}\big( \log \|e^{-xr-yt} 
 \Delta_{\lambda}^{+}(x,y,\mu_A)\|_{\sqrt{md}} \big) \times t \frac{\partial}{\partial t}\big( \log \|e^{-xr-yt} 
 \Delta_{\lambda}^{+}(x,y,\mu_A)\|_{\sqrt{md}} \big) \equiv \\
 &\frac{1}{\sqrt{md}} \Big( -(d+1)+ rt\frac{d}{dx} 
 \mathcal{E}^{d,\lambda}_{\mu_A}(x) \Big|_{x=rt}  \Big) \times \frac{1}{\sqrt{md}} \Big( -(m+1)+rt\frac{d}{dx} 
  \mathcal{E}^{d,\lambda}_{\mu_A}(x) \Big|_{x=rt}  \Big)  \mod[<r,t>^{d+1}].
  \end{align*}
  Now plug in $r=r_{p_A}(s)$ and $t=t_{p_A}(s)$ and notice that as $r_{p_A}(s)= s+ \sum_{i=2}^{\infty}r_is^i$ and $t_{p_A}(s)= s+ \sum_{i=2}^{\infty}t_is^i$,
  \[ f \equiv g \mod[<r_{p_A}(s),t_{p_A}(s)>^{d+1}]  \implies  f \equiv g \mod[s^{d+1}] \].

 \end{proof}
 
 \begin{cor}
 \[
 \sqrt{\frac{1}{\lambda} s^2 \mathcal{Q}_{\mu_A}^{d,\lambda}(s) + \frac{(1-\lambda)^2}{4\lambda^2}   } \equiv \frac{1}{d}\Big(-s^2[\mathcal{E}^{d,\lambda}_{\mu_A}]'(s^2)+ \big[ m\frac{(\lambda+1)}{2}+1\big] \Big)   \mod[s^{d+1}].
 \]
 \end{cor}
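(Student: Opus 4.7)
The plan is to substitute the expression for $s^2\mathcal{Q}^{d,\lambda}_{\mu_A}(s)$ from the previous corollary into the argument of the square root, use $\lambda = d/m$ to clear denominators, recognize the resulting polynomial as a perfect square via a $(A+B)^2 = 4AB + (A-B)^2$ identity, and then pass the square root through the modular equivalence using Remark \ref{compositionmod}.

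More precisely, write $u := -s^2 [\mathcal{E}^{d,\lambda}_{\mu_A}]'(s^2)$ for brevity, so that the previous corollary reads
\[
s^2 \mathcal{Q}^{d,\lambda}_{\mu_A}(s) \equiv \frac{[(m+1)+u]\,[(d+1)+u]}{md} \mod[s^{d+1}].
\]
Multiplying by $\frac{1}{\lambda} = \frac{m}{d}$ gives $\frac{1}{\lambda}s^2\mathcal{Q}^{d,\lambda}_{\mu_A}(s)\equiv \frac{[(m+1)+u][(d+1)+u]}{d^2}$, and $\frac{(1-\lambda)^2}{4\lambda^2} = \frac{(m-d)^2}{4d^2}$. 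Setting $A := (m+1)+u$ and $B := (d+1)+u$, so $A-B = m-d$ and $A+B = m+d+2+2u$, the algebraic identity $4AB + (A-B)^2 = (A+B)^2$ yields
\[
\frac{1}{\lambda}s^2\mathcal{Q}^{d,\lambda}_{\mu_A}(s) + \frac{(1-\lambda)^2}{4\lambda^2} \equiv \frac{(m+d+2+2u)^2}{4d^2} \mod[s^{d+1}].
\]

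To take the square root, I first observe that the constant term of the left-hand side is $\frac{(m+1)(d+1)}{d^2}+\frac{(m-d)^2}{4d^2}$, which by the same identity equals $\left(\frac{m+d+2}{2d}\right)^2 > 0$. Thus both sides are power series in $s$ with the same positive constant term, so the power series $h(y) = \sqrt{y}$, analytic near this constant, applies to both sides. By Remark \ref{compositionmod}, applying $h$ preserves the congruence modulo $s^{d+1}$, and the branch of the square root is pinned down by matching the positive constant term $\frac{m+d+2}{2d}$. Hence
\[
\sqrt{\frac{1}{\lambda}s^2\mathcal{Q}^{d,\lambda}_{\mu_A}(s) + \frac{(1-\lambda)^2}{4\lambda^2}} \equiv \frac{m+d+2+2u}{2d} \mod[s^{d+1}].
\]

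Finally, substituting back $u = -s^2[\mathcal{E}^{d,\lambda}_{\mu_A}]'(s^2)$ and using $m\lambda = d$ so that $\frac{m+d+2}{2} = m\frac{\lambda+1}{2}+1$, we get
\[
\frac{m+d+2+2u}{2d} = \frac{1}{d}\Big(-s^2[\mathcal{E}^{d,\lambda}_{\mu_A}]'(s^2) + \big[m\tfrac{\lambda+1}{2}+1\big]\Big),
\]
which is the claim. The only subtle step is the branch selection for the square root; once one checks that the constant terms match, everything else is algebraic. I expect the identity $4AB+(A-B)^2=(A+B)^2$, which makes the perfect-square structure appear, to be the crux, and the branch justification to be the only nontrivial analytic point.
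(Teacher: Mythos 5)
Your proposal is correct and follows essentially the same route as the paper: the identity $4AB+(A-B)^2=(A+B)^2$ is just a cleaner packaging of the paper's completion of the square around $m\frac{\lambda+1}{2}+1$, after which both arguments take the square root of the modular equality via composition with the analytic square root, with the positive branch fixed by positivity of the constant term (the paper phrases this as nonnegativity of $-s^2[\mathcal{E}^{d,\lambda}_{\mu_A}]'(s^2)+m\frac{\lambda+1}{2}+1$ for small $s$).
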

 
 \begin{proof}
\begin{align}
&s^2  \mathcal{Q}_{\mu_A}^{d,\lambda}(s)  \equiv \frac{1}{m^2\lambda}\Big(\big[ s^2[\mathcal{E}^{d,\lambda}_{\mu_A}]'(s^2)\big]^2 - s^2[\mathcal{E}^{d,\lambda}_{\mu_A}]'(s^2)\big[m+1+m\lambda+1\big]\Big) +\frac{(m+1)(m\lambda +1)}{m^2\lambda}  \mod[s^{d+1}] \\
& \equiv \frac{1}{m^2\lambda} \Big(-s^2[\mathcal{E}^{d,\lambda}_{\mu_A}]'(s^2)+ \big[m\frac{(\lambda+1)}{2}+1\big] \Big)^2 + \frac{-\Big( m\frac{(\lambda+1)}{2}+1\Big)^2+(m+1)(m\lambda+1)}{m^2\lambda}  \mod[s^{d+1}] \\
& \equiv \frac{1}{m^2\lambda} \Big(-s^2[\mathcal{E}^{d,\lambda}_{\mu_A}]'(s^2)+ \big[m\frac{(\lambda+1)}{2}+1\big] \Big)^2 - \frac{(\lambda-1)^2}{4\lambda}  \mod[s^{d+1}].
\end{align}

Therefore, multiplying both sides of the equation by $\frac{1}{\lambda} $ and adding $\frac{(\lambda-1)^2}{4\lambda^2} $: 
\[
\frac{1}{\lambda} s^2\mathcal{Q}_{\mu_A}^{d,\lambda}(s) + \frac{(\lambda-1)^2}{4\lambda^2} \equiv \frac{1}{m^2 \lambda^2}\Big(-s^2[\mathcal{E}^{d,\lambda}_{\mu_A}]'(s^2)+ \big[m\frac{(\lambda+1)}{2}+1\big] \Big)^2    \mod[s^{d+1}].
\]
As $\Big(-s^2[\mathcal{E}^{d,\lambda}_{\mu_A}]'(s^2)+ \big[m\frac{(\lambda+1)}{2}+1\big] \Big) \geq 0$, for $s$ small enough, we can take the square root of our modulo equality (using the stability of the modulo equalities with respect to composition with the analytic function $\sqrt{1+s}$ near $0$) :
\begin{align*}
\sqrt{\frac{1}{\lambda} s^2\mathcal{Q}_{\mu_A}^{d,\lambda}(s) + \frac{(\lambda-1)^2}{4\lambda^2}} & \equiv \sqrt{  \frac{1}{m^2 \lambda^2}\Big(-s^2[\mathcal{E}^{d,\lambda}_{\mu_A}]'(s^2)+ \big[m\frac{(\lambda+1)}{2}+1\big] \Big)^2 }  \mod[s^{d+1}]  \\
& \equiv \frac{1}{d}\Big(-s^2[\mathcal{E}^{d,\lambda}_{\mu_A}]'(s^2)+ \big[m\frac{(\lambda+1)}{2}+1\big] \Big)  \mod[s^{d+1}].
\end{align*}
We finally obtain
\[
\sqrt{\frac{1}{\lambda} s^2\mathcal{Q}_{\mu_A}^{d,\lambda}(s) + \frac{(\lambda-1)^2}{4\lambda^2}}  - \Big(\frac{[\lambda+1]}{2\lambda}+\frac{1}{d}\Big) \equiv \frac{1}{d}\Big(-s^2[\mathcal{E}^{d,\lambda}_{\mu_A}]'(s^2) \Big)  \mod[s^{d+1}].
\]
\end{proof}
 \begin{thm}\label{partialequality}
 \[
 \widetilde{\mathcal{R}}^{d,\lambda}_{\mu_A}(s)  \equiv \mathcal{R}^{d,\lambda}_{\mu_A}(s^2)   \mod [ s^{d+1}]
 \]
 \end{thm}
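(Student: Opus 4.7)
The plan is to combine the corollary that immediately precedes the theorem with the defining identity for the finite $R$-transform, using only the truncation arithmetic formalized in Remark~\ref{compositionmod}. Specifically, the corollary just derived already establishes
\[
\widetilde{\mathcal{R}}^{d,\lambda}_{\mu_A}(s) \equiv -\frac{s^2}{d}[\mathcal{E}^{d,\lambda}_{\mu_A}]'(s^2) \mod[s^{d+1}],
\]
so it suffices to identify the right-hand side with $\mathcal{R}^{d,\lambda}_{\mu_A}(s^2) \mod[s^{d+1}]$.

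The key observation is that $\mathcal{E}^{d,\lambda}_{\mu_A}(x)$ is defined using the truncated exponential $\sum_{i=0}^d(-T^{(m,d)}_{\Strans{p_A}}xmd)^i/i!$, whereas the definition of $\mathcal{R}^{d,\lambda}_{\mu_A}$ uses the full $\mathbb{E}[e^{-T^{(m,d)}_{\Strans{p_A}}xmd}]$. These two power series agree term by term up to degree $d$ in $x$, since they differ only in moments of order $\geq d+1$ contributed by the tail of the exponential. Hence
\[
\mathbb{E}\!\left[\sum_{i=0}^d \frac{(-T^{(m,d)}_{\Strans{p_A}}xmd)^i}{i!}\right] \equiv \mathbb{E}\!\left[e^{-T^{(m,d)}_{\Strans{p_A}}xmd}\right] \mod[x^{d+1}].
\]
Both expressions have constant term $1$, so applying $\log$ preserves the congruence by Remark~\ref{compositionmod}, giving $\mathcal{E}^{d,\lambda}_{\mu_A}(x) \equiv \log \mathbb{E}[e^{-T^{(m,d)}_{\Strans{p_A}}xmd}] \mod[x^{d+1}]$.

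Next, I would apply $\partial_x$ (which degrades the congruence to mod $[x^d]$) followed by multiplication by $-x/d$ (which restores mod $[x^{d+1}]$), again using Remark~\ref{compositionmod}. This yields
\[
-\frac{x}{d}[\mathcal{E}^{d,\lambda}_{\mu_A}]'(x) \equiv -\frac{x}{d}\frac{d}{dx}\log \mathbb{E}\!\left[e^{-T^{(m,d)}_{\Strans{p_A}}xmd}\right] \mod[x^{d+1}],
\]
and the right-hand side is precisely $\mathcal{R}^{d,\lambda}_{\mu_A}(x) \mod[x^{d+1}]$ by Definition~\ref{Rtransformcor}. Substituting $x = s^2$ converts the congruence to one modulo $s^{2d+2}$, which is strictly stronger than modulo $s^{d+1}$. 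Chaining this with the corollary stated at the beginning closes the argument.

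There is no serious obstacle here; the proof is essentially bookkeeping. The only care needed is to verify that each manipulation (truncation of the exponential, composition with $\log$, differentiation in $x$, and finally the substitution $x = s^2$) genuinely preserves the relevant modulus. All four steps are direct consequences of the formalism assembled in Remark~\ref{compositionmod} and Section~\ref{truncation}, so the real content of the theorem was already absorbed into the explicit formula for $\widetilde{\mathcal{R}}^{d,\lambda}_{\mu_A}(s)$ derived in the previous subsection.
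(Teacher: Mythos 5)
Your proof is correct and takes essentially the same route as the paper: start from the corollary $\widetilde{\mathcal{R}}^{d,\lambda}_{\mu_A}(s) \equiv -\tfrac{s^2}{d}[\mathcal{E}^{d,\lambda}_{\mu_A}]'(s^2) \mod[s^{d+1}]$, then identify $-\tfrac{x}{d}[\mathcal{E}^{d,\lambda}_{\mu_A}]'(x)$ with $\mathcal{R}^{d,\lambda}_{\mu_A}(x)$ modulo $x^{d+1}$ by truncating the exponential series, applying $\log$, differentiating (losing one degree) and multiplying back by $x$ (regaining it), exactly as in Definition~\ref{Rtransformcor}. Your bookkeeping in the auxiliary variable $x$ with the substitution $x=s^2$ performed only at the end (yielding a congruence mod $s^{2d+2}$, stronger than needed) is if anything a slightly cleaner way of organizing the same argument than the paper's direct manipulation in $s$.
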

 \begin{proof}
 The previous equality gives us
 \[
  \widetilde{\mathcal{R}}^{d,\lambda}_{\mu_A}(s) \equiv  \frac{1}{d}\Big(-s^2[\mathcal{E}^{d,\lambda}_{\mu_A}]'(s^2) \Big) \mod [ s^{d+1}].
 \]
 On the other hand,
  \begin{align*}& \mathbb{E}\big[ e^{-T^{(m,d)}_{\Strans{p_A}}s^2md}\big] = \sum_{i=0}^{\infty}  \frac{  \mathbb{E}\big[-T^{(m,d)}_{\Strans{p_A}}s^2md\big]^i}{i!} 
 \equiv \sum_{i=0}^{d}  \frac{  \mathbb{E}\big[-T^{(m,d)}_{\Strans{p_A}}s^2md\big]^i}{i!}  \mod [ s^{d+1}]\\
 & \log \big(\mathbb{E}\big[ e^{-T^{(m,d)}_{\Strans{p_A}}s^2md}\big] \big)   \equiv  \log \big( \sum_{i=0}^{d}  \frac{  \mathbb{E}\big[-T^{(m,d)}_{\Strans{p_A}}s^2md\big]^i}{i!} \big) \mod [ s^{d+1}] \\
 & \frac{d}{ ds}\log \big(\mathbb{E}\big[ e^{-T^{(m,d)}_{\Strans{p_A}}s^2md}\big] \big)   \equiv  \frac{d}{ds} \log \big( \sum_{i=0}^{d}  \frac{  \mathbb{E}\big[-T^{(m,d)}_{\Strans{p_A}}s^2md\big]^i}{i!} \big) \mod [ s^{d}] \\
  & \frac{-s}{d}\frac{d}{ds}\log \big(\mathbb{E}\big[ e^{-T^{(m,d)}_{\Strans{p_A}}s^2md}\big] \big)   \equiv \frac{-s}{d} \frac{d}{d s} \log \big( \sum_{i=0}^{d}  \frac{  \mathbb{E}\big[-T^{(m,d)}_{\Strans{p_A}}s^2md\big]^i}{i!} \big) \mod [ s^{d+1}] \\
  &  \mathcal{R}^{d,\lambda}_{\mu_A}(s^2) \equiv \frac{1}{d}\Big(-s^2[\mathcal{E}^{d,\lambda}_{\mu_A}]'(s^2) \Big) \mod [ s^{d+1}].
 \end{align*}

 \end{proof}
 \begin{rmk}
 We notice that the degree $d$ truncation of   $\tilde{\mathcal{R}}^{d,\lambda}_{\mu_A}(s) $ is in fact a polynomial in $s^2$. Only high order odd terms can be nonzero. 
  \end{rmk}

\section{From finite free probability to free probability: a bridge between algebra and analysis}\label{sec:convergence}
\begin{thm}(Convergence of the finite free rectangular transform to the free transform)\label{convergence}
The following convergence holds coefficientwise and also pointwise for $s$ small enough: 
\[
 \mathcal{R}^{d,\lambda}_{\mu_A}(s) \xrightarrow{d\to \infty}   \mathcal{R}^{\lambda}_{\mu_A}(s).
  \]
\end{thm}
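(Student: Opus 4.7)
The plan is to deduce this convergence as a direct combination of Theorem~\ref{Convmodified} and Theorem~\ref{partialequality}, using the modified finite $R$-transform $\widetilde{\mathcal{R}}^{d,\lambda}_{\mu_A}(s)$ as a bridge: it is engineered to be close analytically to $\mathcal{R}^{\lambda}_{\mu_A}(s^2)$, while being close algebraically (in fact equal modulo $s^{d+1}$) to $\mathcal{R}^{d,\lambda}_{\mu_A}(s^2)$. The first closeness is analytic in nature (comes from the $L_q$-to-$L_\infty$ convergence in Lemma~\ref{lemmalpinfinity} via the Legendre transform machinery), and the second is purely algebraic (comes from truncation of the $L_i^{d,\lambda}$, $M_i^{d,\lambda}$ factors). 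Once both are in hand, the triangle inequality in the ring of formal power series does the job, modulo renaming $s^2 \mapsto s$.

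For the coefficientwise statement I would proceed as follows. Fix an integer $n \geq 0$ and let $[s^n]f$ denote the coefficient of $s^n$ in a power series $f$. For every $d \geq n$, Theorem~\ref{partialequality} gives
\[
[s^n]\,\mathcal{R}^{d,\lambda}_{\mu_A}(s^2) \;=\; [s^n]\,\widetilde{\mathcal{R}}^{d,\lambda}_{\mu_A}(s),
\]
since the two series agree mod $s^{d+1}$ and $n \leq d$. On the other hand, Theorem~\ref{Convmodified} asserts that $\widetilde{\mathcal{R}}^{d,\lambda}_{\mu_A}(s) \to \mathcal{R}^{\lambda}_{\mu_A}(s^2)$ as $d \to \infty$, and inspection of the construction (through the explicit expressions obtained in Section~\ref{modifiedtransform} for $s^2\mathcal{Q}_{\mu_A}^{d,\lambda}(s)$ and then for $\widetilde{\mathcal{R}}^{d,\lambda}_{\mu_A}$) shows that this convergence is in fact coefficientwise, so
\[
[s^n]\,\widetilde{\mathcal{R}}^{d,\lambda}_{\mu_A}(s) \;\xrightarrow{d \to \infty}\; [s^n]\,\mathcal{R}^{\lambda}_{\mu_A}(s^2).
\]
Combining these two identities and performing the change of variable $s^2 \mapsto s$ yields $[s^n]\,\mathcal{R}^{d,\lambda}_{\mu_A}(s) \to [s^n]\,\mathcal{R}^{\lambda}_{\mu_A}(s)$ for every $n$. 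Note that the parity observation in the remark after Theorem~\ref{partialequality}, that only even-degree terms survive in the truncation, is compatible with $\mathcal{R}^{\lambda}_{\mu_A}$ being a power series in $s$ (i.e.\ in $s^2$ after the doubling), so no inconsistency arises.

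For the pointwise statement near $s = 0$, I would argue as follows. The free rectangular $R$-transform $\mathcal{R}^{\lambda}_{\mu_A}$ is analytic in a neighborhood $U$ of the origin (as recalled after its definition, since $\mu_A$ is compactly supported). Pick a closed disk $\overline{D(0,\rho)} \subset U$. The coefficientwise convergence already established, combined with a uniform bound on the coefficients of $\widetilde{\mathcal{R}}^{d,\lambda}_{\mu_A}(s)$ on $\overline{D(0,\rho)}$ (obtained from Corollary~\ref{analyQ} and the explicit integral representation of Lemma~\ref{integral}, in the spirit of Vitali's convergence theorem for a normal family of holomorphic functions), upgrades to uniform convergence on compact subsets of $D(0,\rho)$. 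Pointwise convergence for small enough $s$ follows.

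The cleanest conceptual step is the first one: everything is designed so that the two auxiliary theorems slot together. The main technical obstacle, and the one I would devote the most care to, is the passage from coefficientwise to pointwise convergence, i.e.\ justifying that the sequence $\widetilde{\mathcal{R}}^{d,\lambda}_{\mu_A}$ is a normal family on a common disk around $0$; this requires controlling the radius of convergence of $s^2 \mathcal{Q}_{\mu_A}^{d,\lambda}(s)$ uniformly in $d$, which is not immediately obvious from the definition but is forced by the uniform boundedness of the roots of $p_A$ through the dependence of $X = [\sqrt{R},\infty)^2$ on $\maxroot{p_A}$ rather than on $d$.
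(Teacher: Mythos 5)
Your coefficientwise argument is essentially the paper's: it also obtains $a^d_k \to a_k$ by combining Theorem~\ref{partialequality} (agreement of $\widetilde{\mathcal{R}}^{d,\lambda}_{\mu_A}(s)$ with $\mathcal{R}^{d,\lambda}_{\mu_A}(s^2)$ modulo $s^{d+1}$) with the coefficientwise consequence of Theorem~\ref{Convmodified}. That half is fine.

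The pointwise half has a genuine gap. The object you propose to control by a normal-family/Vitali argument is $\widetilde{\mathcal{R}}^{d,\lambda}_{\mu_A}$ (equivalently $s^2\mathcal{Q}^{d,\lambda}_{\mu_A}(s)$), but the theorem is about $\mathcal{R}^{d,\lambda}_{\mu_A}$, and these two agree only up to order $s^d$: the polynomial $\mathcal{R}^{d,\lambda}_{\mu_A}(s^2)$ has degree $2d$, and its coefficients $a^d_k$ for $d/2 < k \leq d$ are simply not determined by, nor bounded through, $\widetilde{\mathcal{R}}^{d,\lambda}_{\mu_A}$. Uniform convergence of $\widetilde{\mathcal{R}}^{d,\lambda}_{\mu_A}$ on a fixed disk therefore does not by itself yield pointwise convergence of $\mathcal{R}^{d,\lambda}_{\mu_A}$; you must show separately that the tail $\sum_{d/2 \leq k \leq d} a^d_k s^k$ tends to zero for $s$ small, uniformly enough in $d$. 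This is exactly the content of the paper's Lemma~\ref{tailemma}, which is proved not by abstract normality but by explicit estimates: the moments $\mathbb{E}\big[(T^{(m,d)}_{\Strans{p_A}}md)^i\big]/i!$ are bounded using $|p_i| \leq R^i\binom{d}{i}$ (with $R=\maxroot{p_A}$) together with bounds of the form $2^d$ and $2^{d/\lambda}$ on the factorial ratios, and the number of terms contributing to the $k$-th coefficient of $\log\big(\mathbb{E}[e^{-T^{(m,d)}_{\Strans{p_A}}smd}]\big)$ is bounded by the partition number $p(k) \leq e^{\beta k}$; this gives $|a^d_k| \leq k\,e^{(\alpha+\beta)k}$ and hence a geometrically small tail for $|s|$ small. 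Your closing remark about controlling the radius of convergence of $s^2\mathcal{Q}^{d,\lambda}_{\mu_A}(s)$ uniformly in $d$ points at the wrong bottleneck: even granting that, the missing step is the bound on the high-order coefficients of the finite $R$-transform itself, and without an argument of the type of Lemma~\ref{tailemma} the pointwise claim does not follow from what you have established.
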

\begin{proof}
Let's write
\begin{align*}
 \mathcal{R}^{\lambda}_{\mu_A}(s)&:= \sum_k a_ks^k    &    \mathcal{R}^{d,\lambda}_{\mu_A}(s)&:= \sum_k a^d_ks^k  & \widetilde{\mathcal{R}}^{d,\lambda}_{\mu_A}(s)&:= \sum_k \tilde{a}^d_ks^k.
  \end{align*}
  By Theorem~\ref{Convmodified}, we get the convergence for all $s$ small enough of  $\widetilde{\mathcal{R}}^{d,\lambda}_{\mu_A}(s)$ to $ \mathcal{R}^{\lambda}_{\mu_A}(s^2)$. In particular we get (as convergence over an interval gives us the convergence of all coefficients separately) that
  \begin{align*}
   \tilde{a}^d_{2k+1}  & \xrightarrow{d\to \infty}  0  &       \tilde{a}^d_{2k}  &  \xrightarrow{d\to \infty}  a_k.
     \end{align*}
     We also get from Corollary~\ref{partialequality} that  $\tilde{a}^d_{2k}= a^d_k$ for $k \leq d/2$, which in particular gives us that $a^d_k \to_{d \rightarrow \infty} a_k$ for all $k$. This doesn't mean that    $\mathcal{R}^{d,\lambda}_{\mu_A}(s)$ converges to  $\mathcal{R}^{\lambda}_{\mu_A}(s)$ or does have a limit when $d$ goes to infinity.
     
       \begin{lemma}\label{tailemma}
  For $s$ small enough, $\mathcal{R}^{d,\lambda}_{\mu_A}(s)$ has a tail of order $d/2$ that goes to zero when $d$ goes to infinity.
  \end{lemma}
     \begin{proof}
     Let's recall that $p_A(x)= \sum_{i}(-1)^ix^{d-i}p_i$ and that
     \[
        \frac{ \mathbb{E} \Big(\big[T^{(m,d)}_{\Strans{p_A}}md \big]^i \Big)}{i!}=   \frac{m^i(m-i)!}{m!}\frac{d^i(d-i)!}{d!} p_i.
     \]
     As the roots are uniformly bounded by $R$, we get that $|p_i| \leq R^i \binom{d}{i} \leq R^i \binom{d}{d/2}$.
      On the other hand, 
          \begin{align*}
          \binom{d}{d/2} &\leq  2^{d}    &  \frac{d^i(d-i)!}{d!} &\leq  \frac{d^i}{(d/2)^i} \leq 2^d  &  \frac{m^i(m-i)!}{m!} &\leq 2^{\frac{d}{\lambda}}.
            \end{align*}
 Grouping the upper bounds we get for $ \alpha>0$ and  all $i \leq d$,
 \[
 \Big|   \frac{ \mathbb{E}\Big(\big[-T^{(m,d)}_{\Strans{p_A}}md \big]^i \Big)}{i!} \Big| \leq e^{d\alpha}.
 \]
 We want to investigate the expansion of
    \[
     \mathcal{R}^{\lambda}_{\mu_A}(s)=   \frac{-1}{d}s \frac{d}{ds}\log \big( \mathbb{E}\big[ e^{-T^{(m,d)}_{\Strans{p_A}}smd}\big]  \big) \mod[s^{d+1}].
  \]
 Now when we look at the coefficients in the expansion in $s$ of $\log \big( \mathbb{E}\big[ e^{-T^{(m,d)}_{\Strans{p_A}}smd}\big]  \big) $. We get $p(k)$ terms of order $k$ expanding the series  where $p(k)$ is the number of partitions of $k$. Overall this number can be upper bounded by $e^{\beta k}$ for some $\beta >0$ ( see \cite{wikipartition}), so that the $k^{th}$ coefficient in the expansion will be upper bounded by $e^{(\alpha+\beta) d}$, for $ k \geq d/2$. We conclude as 
 \[ 
 \sum_{k=d/2}^d |a_{k}^ds^k| \leq  \sum_{k=d/2}^d k |s|^k e^{(\alpha+\beta) k},
 \]
 qnd the last sum goes to zero for $s$ small enough.
       
     \end{proof}
     Back to the convergence. We readily get from Corollary~\ref{partialequality} that
     \begin{equation}\label{convdecompo}
       \mathcal{R}^{d,\lambda}_{\mu_A}(s)=    (\widetilde{\mathcal{R}}^{d,\lambda}_{\mu_A}(s) \mod [s^{d/2}] ) +  \sum_{k=d/2}^d a_{k}^ds^k.
     \end{equation}
     As $\widetilde{\mathcal{R}}^{d,\lambda}_{\mu_A}(s)$ converges to $\mathcal{R}^{\lambda}_{\mu_A}(s)$ for $s$ small enough, then its partial sum of order $d/2$,  the first term in \ref{convdecompo}, also converges to this same limit. Using Lemma~\ref{tailemma}, the second term in \ref{convdecompo} goes to zero and finally we get that   $\mathcal{R}^{d,\lambda}_{\mu_A}(s)$ converges to $\mathcal{R}^{\lambda}_{\mu_A}(s)$ .
      \end{proof}.

\begin{lemma}
  A good way to exhibit actual matrices/polynomials that have constant underlying measure and whose dimensions go to infinity is to stack up (by block) $n$ identical matrices $A$ of fixed size $m \times d$(associated to a polynomial $p$) along the diagonal, and their  characteristic polynomials will be $p^n$. We can make $n$ go to infinity and we get
\[
 \mathcal{R}^{dn,\lambda}_{\Strans{p}^n}(s) \xrightarrow{n\to \infty}  \Rtransform{\Strans{p}} (s).
\]
\end{lemma}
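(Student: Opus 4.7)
My plan is to derive this as an essentially immediate specialization of Theorem~\ref{convergence} to the sequence $(p^n)_{n \geq 1}$ of polynomials of growing degree $dn$ with fixed aspect ratio $\lambda$. The whole argument hinges on two invariance observations together with a verification that the hypotheses implicit in the main convergence theorem (in particular the uniform root bound used in Lemma~\ref{tailemma}) are satisfied.

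First I would make the stacking construction explicit: if $A$ is an $m \times d$ matrix with $\chi_{A^TA} = p$, then the block-diagonal matrix $A^{\oplus n}$ of size $mn \times dn$ has the same aspect ratio $\lambda = d/m = (dn)/(mn)$, and its singular values are those of $A$ each counted $n$ times, so $\chi_{(A^{\oplus n})^T A^{\oplus n}} = p^n$. Consequently the symmetrized root measure is unchanged: $\mu_{\Strans{p^n}} = \mu_{\Strans{p}}$, and likewise $\maxroot{p^n} = \maxroot{p}$. Since the free rectangular $R$-transform is a functional of the measure alone, this immediately yields
\[
\Rtransform{\Strans{p^n}}(s) = \Rtransform{\mu_{\Strans{p^n}}}(s) = \Rtransform{\mu_{\Strans{p}}}(s) = \Rtransform{\Strans{p}}(s).
\]

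It then remains to apply Theorem~\ref{convergence} to the sequence $p^n$ as $n \to \infty$, which corresponds to degree $dn \to \infty$ at fixed $\lambda$. The only real check is that the proof of Theorem~\ref{convergence} (through Lemma~\ref{tailemma}) only uses a uniform bound $R$ on the roots of the polynomials in the sequence; because the roots of $p^n$ are exactly those of $p$, this bound is trivially uniform, equal to $\maxroot{p}$. Combining the conclusion $\mathcal{R}^{dn,\lambda}_{\Strans{p^n}}(s) \to \Rtransform{\Strans{p^n}}(s)$ from Theorem~\ref{convergence} with the identification $\Rtransform{\Strans{p^n}} = \Rtransform{\Strans{p}}$ above gives the claim. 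I do not anticipate a genuine obstacle here: the lemma is really just an illustration that Theorem~\ref{convergence} is non-vacuous, producing an explicit sequence (block stacks) along which the convergence can be observed concretely.
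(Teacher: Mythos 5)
Your proposal is correct and follows exactly the paper's route: observe that $\mu_{\Strans{p^n}} = \mu_{\Strans{p}}$ (so the free rectangular $R$-transform and the root bound are unchanged along the sequence) and then apply Theorem~\ref{convergence} to the polynomials $p^n$ of degree $dn$ at fixed $\lambda$. Your extra verification that the uniform root bound used in Lemma~\ref{tailemma} holds trivially is a welcome bit of care, but it does not change the argument.
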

\begin{proof}
Consider $p^n$, a polynomial with roots repeated $n$ times. It is a way to keep the measure on the roots constant and increase the size of the matrix. We have
\[
\mu_{\Strans{p}}= \mu_{\Strans{p^n}},
\]
and using Theorem \ref{convergence} we get that
\[
 \mathcal{R}^{dn,\lambda}_{\Strans{p^n}}(s) \xrightarrow{n\to \infty}   \Rtransform{\Strans{p}} (s).
\]
\end{proof}
\begin{cor}
\[
\mathcal{R}^{dn,\lambda}_{\Strans{[p^n \recsum q^n]}}(s)   \xrightarrow{n\to \infty} \Rtransform{\mu_{\Strans{p}} \recsumfree \mu_{\Strans{q}}} (s).
\]
\end{cor}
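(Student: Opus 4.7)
The plan is to decompose the identity along the two additivity properties, one finite and one asymptotic, using the convergence lemma just proved as the bridge between them. Since $p$ has degree $d$ and $q$ has degree $d$, the polynomial $p^n$ has degree $dn$ and likewise for $q^n$, so $p^n \recsum q^n$ is formed at dimension $dn$ with the same ratio $\lambda = d/m = dn/(mn)$. This ensures that all three transforms in play ($\mathcal{R}^{dn,\lambda}_{\Strans{p^n}}$, $\mathcal{R}^{dn,\lambda}_{\Strans{q^n}}$, $\mathcal{R}^{dn,\lambda}_{\Strans{[p^n \recsum q^n]}}$) are defined with consistent parameters.

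First, I would apply the finite additivity Theorem~\ref{Rtransform:additivity} at dimension $dn$ to write
\[
\mathcal{R}^{dn,\lambda}_{\Strans{[p^n \recsum q^n]}}(s) \;=\; \mathcal{R}^{dn,\lambda}_{\Strans{p^n}}(s) \;+\; \mathcal{R}^{dn,\lambda}_{\Strans{q^n}}(s).
\]
Then I would use the preceding lemma (stacking $n$ copies of a fixed $m \times d$ matrix along the diagonal to keep $\mu_{\Strans{p}}$ unchanged while sending the dimension to infinity) to get coefficientwise / pointwise convergence
\[
\mathcal{R}^{dn,\lambda}_{\Strans{p^n}}(s) \xrightarrow{n\to \infty} \Rtransform{\Strans{p}}(s) \AND \mathcal{R}^{dn,\lambda}_{\Strans{q^n}}(s) \xrightarrow{n\to \infty} \Rtransform{\Strans{q}}(s).
\]
Adding these and invoking the free rectangular $R$-transform additivity (Voiculescu--Benaych-Georges) yields
\[
\mathcal{R}^{dn,\lambda}_{\Strans{[p^n \recsum q^n]}}(s) \xrightarrow{n\to \infty} \Rtransform{\Strans{p}}(s) + \Rtransform{\Strans{q}}(s) = \Rtransform{\mu_{\Strans{p}} \recsumfree \mu_{\Strans{q}}}(s),
\]
which is exactly the claim.

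There is essentially no obstacle: the work has already been done in Theorems~\ref{Rtransform:additivity} and~\ref{convergence}, and the only mild point to check is that the mode of convergence (coefficientwise on formal power series, or pointwise for $s$ small enough) is preserved under addition of two convergent sequences, which is immediate. The one bookkeeping subtlety worth mentioning explicitly is that finite additivity is invoked at the \emph{varying} dimension $dn$ rather than at the fixed dimension $d$, so one should note that both convergences on the right-hand side are of the same ``stacked'' form covered by the previous lemma, guaranteeing their sum converges to the correct free quantity.
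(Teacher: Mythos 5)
Your proposal is correct and follows the same route as the paper: finite additivity (Theorem~\ref{Rtransform:additivity}) at dimension $dn$, convergence of each stacked term $\mathcal{R}^{dn,\lambda}_{\Strans{p^n}}(s)\to\Rtransform{\Strans{p}}(s)$ from the preceding lemma, and the free rectangular additivity to identify the limit with $\Rtransform{\mu_{\Strans{p}} \recsumfree \mu_{\Strans{q}}}(s)$. Your remark about the dimension-$dn$ bookkeeping is a nice touch but nothing further is needed.
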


\begin{proof}
We have on the one hand
\[
 \mathcal{R}^{dn,\lambda}_{\Strans{[p^n \recsum q^n]}}(s) = \mathcal{R}^{dn,\lambda}_{\Strans{p^n}}(s) +\mathcal{R}^{dn,\lambda}_{\Strans{q^n}}(s),
\]
so that
\[
    \mathcal{R}^{dn,\lambda}_{\Strans{[p^n \recsum q^n]}}(s)  \xrightarrow{n\to \infty}  \Rtransform{\Strans{p}} (s) + \Rtransform{q} (s) = \Rtransform{ \mu_{\Strans{p}}\recsumfree \mu_{\Strans{q}}} (s).
    \]

\end{proof}

\section{Limit theorems and special polynomials}\label{limittheorems}
In this section, we try to show how we can actually consider even realrooted polynomials as equivalent of independent random variables in our framework.
Recall that, for a real rooted polynomial $r$ of degree $d$ with roots $r_i$, we can define its expectation and variance as
\begin{align*}
\mathbb{E}(r)&:=\frac{1}{d}\sum_i  r_i    &\text{and}& &    Var(r)&:= \frac{1}{d}\sum_i  (r_i- \mathbb{E}(r))^2.
\end{align*}

\begin{lemma}\label{explemma} Exactly like for a sum of independent random variables, we get
\begin{itemize}
\item  $\mathbb{E}\big[\Strans{[p \boxplus_{d,\lambda} q]}\big]= \mathbb{E}[\Strans{p}] +\mathbb{E}[\Strans{q}] =0$ , 

\item $Var\big[\Strans{[p \boxplus_{d,\lambda}q]}\big]=Var[\Strans{p}]+Var[\Strans{q}]$.
   
\end{itemize}
\end{lemma}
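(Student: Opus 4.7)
The first bullet is essentially trivial once one unpacks the definition of $\Strans{}$. If $p$ has nonnegative real roots $\lambda_1,\dots,\lambda_d$, then by definition $\Strans{p}(x)=p(x^2)$ has the $2d$ roots $\pm\sqrt{\lambda_1},\dots,\pm\sqrt{\lambda_d}$, which come in $\pm$ pairs and therefore sum to zero. Hence $\mathbb{E}[\Strans{p}]=0$ for every polynomial $p$ with nonnegative real roots, and likewise $\mathbb{E}[\Strans{q}]=0$. Since the rectangular convolution preserves nonnegative real roots (Theorem~\ref{thm:rr}), the same reasoning applies to $p\boxplus_{d,\lambda}q$, giving $\mathbb{E}[\Strans{[p\boxplus_{d,\lambda}q]}]=0$. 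The equality with $\mathbb{E}[\Strans{p}]+\mathbb{E}[\Strans{q}]$ is then $0=0+0$.

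For the second bullet, I first compute the variance in closed form. Since $\mathbb{E}[\Strans{p}]=0$, the $\pm$-pairing of the roots gives
\[
Var[\Strans{p}]=\frac{1}{2d}\sum_{i=1}^{d}\bigl((\sqrt{\lambda_i})^2+(-\sqrt{\lambda_i})^2\bigr)=\frac{1}{d}\sum_{i=1}^{d}\lambda_i,
\]
and this equals $a_1/d$, where $a_1$ is the coefficient of $x^{d-1}$ (up to the sign convention) in $p(x)=\sum_{i=0}^d(-1)^i a_i x^{d-i}$. So the problem reduces to showing that the first nontrivial coefficient is additive under $\boxplus_{d,\lambda}$.

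This is an immediate consequence of the explicit binomial formula for $\boxplus_{d,\lambda}$ recalled in the introduction. Setting $k=1$ there, the coefficient of $x^{d-1}$ in $p\boxplus_{d,\lambda}q$ is
\[
-\sum_{i+j=1}\frac{(d-i)!(d-j)!}{d!(d-1)!}\frac{(m-i)!(m-j)!}{m!(m-1)!}a_i b_j=-(a_0 b_1+a_1 b_0)=-(a_1+b_1),
\]
using the normalization $a_0=b_0=1$. Thus the sum of the roots of $p\boxplus_{d,\lambda}q$ equals the sum of the roots of $p$ plus the sum of the roots of $q$, and dividing by $d$ yields $Var[\Strans{[p\boxplus_{d,\lambda}q]}]=Var[\Strans{p}]+Var[\Strans{q}]$.

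There is no real obstacle here; the only step that requires a line or two of calculation is identifying the $k=1$ term in the binomial formula and recognizing that variance collapses to the first power sum of the roots of $p$. An equivalent (and perhaps slicker) route would be to read off the linear coefficient of $\mathcal{R}^{d,\lambda}_{\Strans{p}}(s)$ and invoke Theorem~\ref{Rtransform:additivity}, but the direct coefficient computation is the most elementary.
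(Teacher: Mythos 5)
Your argument is correct. The only things to verify are the ones you verify: the roots of $\Strans{p}$ come in $\pm$ pairs so every symmetrized polynomial has mean zero (and $p\boxplus_{d,\lambda}q$ again has nonnegative real roots by Theorem~\ref{thm:rr}, so the same applies to it), the variance therefore collapses to $\frac{1}{d}\sum_i\lambda_i(p)$, i.e.\ to the coefficient $a_1$ of the monic polynomial $p$, and the $k=1$ term of the binomial formula gives $a_1+b_1$ since the factorial ratios are exactly $1$ when $\{i,j\}=\{0,1\}$. Your route differs slightly from the paper's, which disposes of the lemma in one line by inspecting the first two coefficients of the finite $R$-transform $\mathcal{R}^{d,\lambda}_{\Strans{p}}$ and invoking the additivity of Theorem~\ref{Rtransform:additivity} — precisely the ``slicker'' alternative you mention at the end. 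The trade-off is as you'd expect: the paper's version is shorter once the transform machinery is in place and makes the statement read as ``the first cumulants are additive,'' while yours is more elementary and self-contained, needing only the explicit binomial formula for $\boxplus_{d,\lambda}$ imported from \cite{MG1} and no properties of the $R$-transform at all. Either is a complete proof; yours actually supplies more detail than the paper does.
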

\begin{proof}
It follows by inspection of the two first coefficients (associated to $1$ and $x$) in the finite $R$-transform. 
\end{proof}

We study in this section asymptotics related to the convolution. The only single-rooted symmetric polynomial is the zero polynomial $x^d$. The law of large numbers in probability states that for random variables that have the same mean and are independent, then the average converges to this mean, so the constant random variable. 
We adapt it in this context: we add polynomials "freely" and renormalize the symmetrization of their sum. The result is that it converges to the zero polynomial.
Let's first introduce a renormalization operator that rescales the roots and establish a few basic properties verified by $R$- transforms.
For $p$ of degree $d$, and $\alpha>0$, define $\mathit{R_{\alpha}}(p):= \alpha^{-d}p(\alpha x)$.
 For two polynomials $p$ and $q$, we will use $p \approx q$ to express that they have the same roots but not the same leading coefficient.
\begin{lemma}\label{operations}
For all $\alpha>0$,
$\mathit{R}_{\alpha}\Strans{p} = \Strans{[\mathit{R}_{\alpha^2}p]}$. Also, $\mathit{R}_{\alpha}[p \boxplus_{d,\lambda} q] =\mathit{R}_{\alpha}p \boxplus_{d,\lambda} \mathit{R}_{\alpha}q$. 
\end{lemma}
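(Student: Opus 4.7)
The plan is to verify both identities by direct computation, exploiting the explicit forms available for the symmetrization operator and the rectangular convolution.

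For the first identity $\mathit{R}_{\alpha}\Strans{p} = \Strans{[\mathit{R}_{\alpha^2}p]}$, the proof is a one-line substitution. Since $\Strans{p}$ is a polynomial of degree $2d$, by Definition~\ref{Normoperator} we have $\mathit{R}_{\alpha}\Strans{p}(x) = \alpha^{-2d}p((\alpha x)^2) = \alpha^{-2d}p(\alpha^2 x^2)$. On the other side, $\mathit{R}_{\alpha^2}p(x) = \alpha^{-2d}p(\alpha^2 x)$, so $\Strans{[\mathit{R}_{\alpha^2}p]}(x) = \alpha^{-2d}p(\alpha^2 x^2)$. The two expressions are manifestly equal.

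For the second identity, I would use the explicit binomial formula stated earlier in the introduction (from \cite{MG1}). Writing $p(x) = \sum_i(-1)^i p_i x^{d-i}$, a direct computation gives that $\mathit{R}_{\alpha}p$ has coefficients $\alpha^{-i}p_i$ in place of $p_i$, and similarly for $q$. Plugging these scaled coefficients into the binomial formula, the factor $\alpha^{-i}\alpha^{-j}=\alpha^{-k}$ pulls out of the inner sum indexed by $i+j=k$. On the other side, $\mathit{R}_{\alpha}[p \boxplus_{d,\lambda}q](x) = \alpha^{-d}[p\boxplus_{d,\lambda}q](\alpha x)$ introduces a factor of $\alpha^{d-k}\cdot\alpha^{-d}=\alpha^{-k}$ in front of the $k$-th term of the expansion, exactly matching.

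Alternatively (and perhaps more elegantly), one can appeal to the matrix-theoretic definition. If $p=\chi_{A^{T}A}$, then $\mathit{R}_{\alpha}p = \chi_{(A/\sqrt\alpha)^T (A/\sqrt\alpha)}$, since scaling $A$ by $1/\sqrt\alpha$ scales every eigenvalue of $A^TA$ by $1/\alpha$. Applying Definition~\ref{rectconvdefn} to $A/\sqrt{\alpha}$ and $B/\sqrt{\alpha}$, we obtain
\[
\mathit{R}_{\alpha}p \boxplus_{d,\lambda} \mathit{R}_{\alpha}q = \iint \det\bigl[xI_d - \tfrac{1}{\alpha}(A+\hat Q B \hat R)^T(A+\hat Q B \hat R)\bigr]d\hat Q d\hat R,
\]
which by pulling out $\alpha^{-d}$ and changing variables $x\mapsto \alpha x$ inside the determinant is exactly $\mathit{R}_{\alpha}[p\boxplus_{d,\lambda}q]$.

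There is no real obstacle here; both statements are routine verifications, with the first being substitution and the second being either a coefficient-by-coefficient check against the binomial formula or an invariance of the Haar integral under the rescaling $A\mapsto A/\sqrt\alpha$, $B\mapsto B/\sqrt\alpha$.
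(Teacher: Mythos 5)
Your proof is correct and matches the paper's intent: the paper simply states ``Straightforward using the definitions,'' and your computations (degree-$2d$ substitution for the symmetrization identity, coefficient scaling $p_i\mapsto\alpha^{-i}p_i$ in the binomial formula, or equivalently the rescaling $A\mapsto A/\sqrt{\alpha}$ in the Haar integral) are exactly the routine verifications being alluded to.
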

\begin{proof}
Straightforward using the definitions. 
\end{proof}

\begin{lemma}\label{Rtransformscaling}
 If   $\mathcal{R}^{d,\lambda}_{\Strans{p}}(s)= \sum_{k=1}^db_ks^k$, then
  $\mathcal{R}^{d,\lambda}_{\Strans{\mathit{R}_{\alpha}p}}(s)=  \sum_{k=1}^db_k\alpha^ks^k$. 
\end{lemma}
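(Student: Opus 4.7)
The plan is to chase the rescaling straight through the definition of $\mathcal{R}^{d,\lambda}_{\Strans p}$ (Definition~\ref{Rtransformcor}) via the rectangular $T$-transform. If one writes $p(x)=\sum_{i=0}^d(-1)^i p_i x^{d-i}$, then the formula $\mathit{R}_\alpha(p)=\alpha^{-d}p(\alpha x)$ rescales each coefficient $p_i$ by a fixed power of $\alpha$ that depends only on $i$. The moment formula used in the proof of Claim~3.2, namely
\[
\sum_{j=1}^d [t_j^{(m,d)}]^i \;=\; d\,\frac{i!\,(m-i)!\,(d-i)!}{m!\,d!}\,p_i,
\]
then forces every power sum of the multiset $\{t_j^{(m,d)}(\mathit{R}_\alpha p)\}_{j}$ to be rescaled by the same power of $\alpha$. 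By Newton's identities, this is exactly the statement that $T^{(m,d)}_{\Strans{\mathit{R}_\alpha p}}$ is a uniform rescaling of $T^{(m,d)}_{\Strans p}$ by a pure power of $\alpha$.

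Substituting this into the log-MGF appearing in Definition~\ref{Rtransformcor} converts the scaling into a plain dilation in~$s$: one gets
\[
\log\mathbb{E}\bigl[e^{-T^{(m,d)}_{\Strans{\mathit{R}_\alpha p}}\,s\,m\,d}\bigr]
\;=\;
\log\mathbb{E}\bigl[e^{-T^{(m,d)}_{\Strans p}\,(\alpha s)\,m\,d}\bigr],
\]
and applying $\tfrac{-s}{d}\tfrac{d}{ds}$ together with the chain rule produces the extra factor of $\alpha$ that converts $s$ into $\alpha s$ inside $\mathcal{R}^{d,\lambda}_{\Strans p}$, yielding
\[
\mathcal{R}^{d,\lambda}_{\Strans{\mathit{R}_\alpha p}}(s) \;\equiv\; \mathcal{R}^{d,\lambda}_{\Strans p}(\alpha s) \mod [s^{d+1}].
\]
Since both sides are polynomials of degree $d$ in $s$ with no constant term, matching the coefficient of $s^k$ on each side immediately gives $a^d_k \mapsto \alpha^k\,a^d_k$, as claimed.

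The calculation is routine, so the only real obstacle is a bookkeeping one: tracking the direction of the scaling induced by $\mathit{R}_\alpha$. An inconsistent convention would flip $\alpha^k$ to $\alpha^{-k}$, so one has to be careful at the step relating the coefficients of $p$ to the coefficients of $\mathit{R}_\alpha p$, and at the chain-rule step. A slightly more transparent alternative is to start from the inversion formula $y^{m-d}p(xy) = e^{-\widehat{\mathcal{R}}^{d,\lambda}_{\Strans p}(\partial_x\partial_y)}\{y^m x^d\}$ applied to $\mathit{R}_\alpha p$: an affine change of variables in $x$ (or in $y$) on the right-hand side introduces a factor of $\alpha$ inside the $\partial_x\partial_y$ argument, which makes the scaling of $\widehat{\mathcal{R}}^{d,\lambda}$ (and then, by the integral relation between $\widehat{\mathcal{R}}^{d,\lambda}$ and $\mathcal{R}^{d,\lambda}$, of $\mathcal{R}^{d,\lambda}$) visible without ever computing moments explicitly.
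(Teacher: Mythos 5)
Your proof is correct and takes essentially the same route as the paper's: the paper also propagates the scaling of the coefficients of $p$ to the moments of the $T$-transform, $\mathbb{E}\big[(T^{(m,d)}_{\Strans{\mathit{R}_{\alpha}p}})^i\big]=\alpha^i\,\mathbb{E}\big[(T^{(m,d)}_{\Strans{p}})^i\big]$, and then expands Definition~\ref{Rtransformcor} by linearity, which is exactly your dilation $s\mapsto\alpha s$ read off coefficientwise. The direction-of-scaling caveat you raise is the only subtlety, and the paper handles it by simply asserting that $\mathit{R}_{\alpha}$ scales the roots by $\alpha$ and hence each $p_i$ by $\alpha^i$.
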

\begin{proof}
Using the fact that $\mathit{R}_{\alpha}$ multiplies the roots by $\alpha$ and therefore rescales the coefficients of $p$, $p_i$, by  $\alpha^i$, we get
\[
 \mathbb{E}\Big( [T^{(m,d)}_{\Strans{\mathit{R}_{\alpha}p}}]^i\Big)=  \mathbb{E}\Big( [T^{(m,d)}_{\Strans{p}}]^i \Big) \alpha^i.
 \]
 We then get expanding and using linearity,
 \[
 \mathcal{R}^{d,\lambda}_{\Strans{R_{\alpha}p}}(s) =  \sum_{k=1}^db_k \alpha^ks^k.
 \]
\end{proof}

\begin{lemma}\label{Rboundedness}
Fix $d$. Then if  $p_1,p_2$,... are a sequence of degree $d$ polynomials with real nonnegative roots and means uniformly bounded, then all the coefficients are uniformly bounded too, and the coefficients of the  $\mathcal{R}^{d,\lambda}_{ \Strans{p_i}}$ as well. By uniform bound, we mean that it is valid for all polynomials with this property. 
\end{lemma}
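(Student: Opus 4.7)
The plan is to propagate the hypothesis of bounded means through a cascade of finite-dimensional reductions, exploiting crucially that $d$ (and hence $m = d/\lambda$) is held fixed throughout. First, since all roots $r_{i,j}$ of $p_i$ are nonnegative with $\frac{1}{d}\sum_j r_{i,j} \leq C$ for some uniform constant $C$, the crude estimate $r_{i,j} \leq dC$ holds for every $i,j$. Writing $p_i(x) = \sum_{k=0}^{d}(-1)^k p_{i,k} x^{d-k}$, each coefficient $p_{i,k}$ is (up to sign) the $k$-th elementary symmetric polynomial in the roots, so it is uniformly bounded by $\binom{d}{k}(dC)^k$, a constant depending only on $d$, $C$, and $k$.

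Next, I invoke the formula established in the proof of well-definedness of the rectangular $T$-transform in Section~\ref{defnRfinite}, namely
\[
\mathbb{E}\!\left[\left(T^{(m,d)}_{\Strans{p_i}}\right)^k\right] = \frac{k!\,(m-k)!\,(d-k)!}{m!\,d!}\, p_{i,k}.
\]
Since the prefactor depends only on the fixed data $(d,m,k)$ and $|p_{i,k}|$ has just been bounded uniformly in $i$, all the $T$-moments are uniformly bounded in $i$ for every $k \leq d$.

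Finally, by Definition~\ref{Rtransformcor}, each coefficient of the polynomial $\mathcal{R}^{d,\lambda}_{\Strans{p_i}}(s)$ of degree at most $d$ is a coefficient of $-\frac{s}{d}\frac{d}{ds}\log \mathbb{E}[e^{-T^{(m,d)}_{\Strans{p_i}} smd}] \bmod s^{d+1}$. Expanding the inner expectation, I write it as $1 + g_i(s)$ where $g_i(s) = \sum_{k=1}^{d}\frac{(-smd)^k}{k!}\mathbb{E}[(T^{(m,d)}_{\Strans{p_i}})^k] \bmod s^{d+1}$ has no constant term and coefficients that are uniformly bounded linear combinations of the $T$-moments. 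The truncated logarithm $\log(1 + g_i) \bmod s^{d+1} = \sum_{k=1}^{d}\frac{(-1)^{k+1}}{k} g_i(s)^k \bmod s^{d+1}$ is a fixed polynomial expression in the coefficients of $g_i$ with combinatorial coefficients depending only on $d$, as in the truncation formalism of Section~\ref{truncation}; differentiation in $s$ and multiplication by $-s/d$ then act linearly on coefficients. Composing these fixed polynomial operations, each coefficient of $\mathcal{R}^{d,\lambda}_{\Strans{p_i}}(s)$ is a polynomial function (with fixed coefficients) of the uniformly bounded quantities $p_{i,k}$, and is therefore uniformly bounded in $i$. The only point requiring care is ensuring that the formal logarithm is well-defined, which follows from $g_i(0) = 0$, making the entire manipulation a finite polynomial procedure immune to convergence concerns after truncation modulo $s^{d+1}$.
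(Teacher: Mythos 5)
Your proof is correct and follows essentially the same route as the paper's: bound the polynomial coefficients (elementary symmetric functions of the roots) using the bounded means and fixed $d$, then observe that for fixed $d,m$ the coefficients of $\mathcal{R}^{d,\lambda}_{\Strans{p_i}}$ are fixed polynomial functions of those coefficients (equivalently of the $T$-moments), hence uniformly bounded. Your version simply makes explicit, via the truncated logarithm modulo $s^{d+1}$, the polynomial dependence that the paper asserts in one line.
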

\begin{proof}
The first part follows from the $k$-norms of roots being all equivalent and therefore bounded by the $1$-norm (the parameter depending only on the dimensions), when the size $d$ is fix (and $k \in [|1,d|]$), and the coefficients of the polynomials being combinations of the $k$-norms of the roots. It could also be proven using repeatedly Cauchy Schwarz inequality. As for the second part, notice that if $d$ and $m$ are fix, then the coefficients of $\mathcal{R}^{d,\lambda}_{ \Strans{p_i}}$  are polynomial in  the coefficients of the polynomial $p_i$, that are uniformly bounded as we just saw, so the uniform boundedness of the coefficients of $\mathcal{R}^{d,\lambda}_{ \Strans{p_i}}$  follows. 
\end{proof}
\begin{prop}[Law of large numbers]  \label{lawlargenumbers} Let $p_1,p_2$,... be a sequence of degree $d$ polynomials with real nonnegative roots and means uniformly bounded by $\sigma^2$, that is,
\begin{align*}  & p_i:= \prod_j (x-r_{i,j}^2) & &\text{and}& \frac{1}{d}\sum_j r_{i,j}^2 \leq \sigma^2 &,
\end{align*}  
then
\[
\lim_{N \to \infty} \mathit{R_{1/{N}}}(\Strans{[p_1\boxplus_{d,\lambda}... \boxplus_{d,\lambda} p_N]}) (x)\approx  x^d.
\]
\end{prop}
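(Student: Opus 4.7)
The plan is to combine the additivity of the rectangular finite $R$-transform (Theorem~\ref{Rtransform:additivity}), its scaling rule (Lemma~\ref{Rtransformscaling}), and the uniform coefficient bound of Lemma~\ref{Rboundedness}, and then invoke the Inversion Formula to conclude that all roots collapse to zero. The first move is to push the normalization past the symmetrization and the convolution using Lemma~\ref{operations}:
\[
\mathit{R}_{1/N}\bigl(\Strans{[p_1 \boxplus_{d,\lambda} \cdots \boxplus_{d,\lambda} p_N]}\bigr) = \Strans{\bigl[\mathit{R}_{1/N^2}(p_1) \boxplus_{d,\lambda} \cdots \boxplus_{d,\lambda} \mathit{R}_{1/N^2}(p_N)\bigr]},
\]
so it suffices to control the finite $R$-transform of the right-hand side.

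Writing $\mathcal{R}^{d,\lambda}_{\Strans{p_i}}(s) = \sum_{k=1}^d b^d_{k,i}\, s^k$, the additivity of Theorem~\ref{Rtransform:additivity} combined with the scaling rule of Lemma~\ref{Rtransformscaling} (applied with $\alpha = 1/N^2$) yields
\[
\mathcal{R}^{d,\lambda}_{\Strans{[\mathit{R}_{1/N^2}(p_1)\boxplus\cdots\boxplus\mathit{R}_{1/N^2}(p_N)]}}(s) = \sum_{k=1}^d N^{-2k}\Bigl(\sum_{i=1}^N b^d_{k,i}\Bigr)s^k.
\]
Since the hypothesis $\frac{1}{d}\sum_j r_{i,j}^2 \leq \sigma^2$ together with nonnegativity of the squared roots gives $r_{i,j}^2 \leq d\sigma^2$ for every $i,j$, all coefficients of the $p_i$ are uniformly bounded, and Lemma~\ref{Rboundedness} therefore provides a constant $C=C(d,\lambda,\sigma^2)$ with $|b^d_{k,i}|\leq C$ for all $i,k$. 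Hence
\[
\Bigl|\,N^{-2k}\sum_{i=1}^N b^d_{k,i}\,\Bigr| \leq C\, N^{1-2k} \xrightarrow{N\to\infty} 0 \quad\text{for every }k\geq 1,
\]
and the finite $R$-transform of the normalized $N$-fold convolution tends coefficientwise to the zero polynomial.

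To finish, I would invoke the Inversion Formula: the finite $R$-transform uniquely determines the associated monic polynomial, and the unique monic degree $d$ polynomial whose $R$-transform vanishes identically is $p = x^d$ (the associated $T$-transform is then identically zero), whose symmetrization is $x^{2d}$. The map between polynomial coefficients and $R$-transform coefficients is a polynomial bijection at fixed degree, so coefficientwise convergence of the $R$-transform transfers to coefficientwise convergence of the underlying polynomial, and hence to convergence of the roots by continuity. The main obstacle I anticipate is purely bookkeeping: one must carefully track the square appearing in $\mathit{R}_{\alpha}\Strans{p} = \Strans{[\mathit{R}_{\alpha^2}p]}$, the exponent $\alpha^k$ in Lemma~\ref{Rtransformscaling}, and verify that the mean-bound hypothesis indeed yields uniform bounds on individual roots (not only on their average). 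Once those are in order, the factor $N^{1-2k}$ does all the work.
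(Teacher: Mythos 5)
Your proposal is correct and follows essentially the same route as the paper: push the normalization through the symmetrization and convolution via Lemma~\ref{operations}, use additivity (Theorem~\ref{Rtransform:additivity}) and the scaling rule (Lemma~\ref{Rtransformscaling}) with $\alpha=1/N^2$ to get coefficients bounded by $C N^{1-2k}$ via Lemma~\ref{Rboundedness}, and then conclude by the inversion formula that the limit is the polynomial with vanishing $R$-transform, namely $x^d$ (with symmetrization $x^{2d}$). The extra bookkeeping remarks you flag (the $\alpha^2$ in $\mathit{R}_{\alpha}\Strans{p}=\Strans{[\mathit{R}_{\alpha^2}p]}$ and the uniform root bound $r_{i,j}^2\le d\sigma^2$) are handled exactly as you anticipate and pose no obstruction.
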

\begin{proof}
Let's write for all $i$, 
 \[
 \mathcal{R}^{d,\lambda}_{ \Strans{p_i}}(s):=   \sum_{k=1}^db_{k,i}  s^k.
 \]
Using successively Lemma~\ref{Rtransformscaling} and Lemma~\ref{operations},
\begin{align*}
 \mathcal{R}^{d,\lambda}_{ \mathit{R}_{1/{N}}(\Strans{[p_1\boxplus_{d,\lambda}... \boxplus_{d,\lambda} p_N]})}(s) &=   \mathcal{R}^{d,\lambda}_{ \Strans{ \mathit{R}_{1/{N^2}}[p_1\boxplus_{d,\lambda}... \boxplus_{d,\lambda} p_N]}}(s) \\
 &=  \mathcal{R}^{d,\lambda}_{ \Strans{\big[ (\mathit{R}_{1/{N^2}}p_1)\boxplus_{d,\lambda}... \boxplus_{d,\lambda} (\mathit{R}_{1/{N^2}}p_N)} \big]}(s) \\
 &= \sum_{i=1}^N \mathcal{R}^{d,\lambda}_{ \Strans{ \mathit{R}_{1/{N^2}}p_i}}(s)\\
 &=  \sum_{i=1}^N  \sum_{k=1}^db_{k,i}  \frac{1}{N^{2k}}s^k.
 \end{align*}
Due to Lemma~\ref{Rboundedness}, the uniform boundedness of the $\big(b_{k,i}\big)_{k,i}$ holds, and therefore there exists $K>0$ such that  $|b_{k,i}| \leq K$ for all $i \leq N$, $k\leq d$. As a consequence, we get $\frac{  |\sum_{i=1}^N b_{k,i} |}{ N^{2}} \leq \frac{K}{N}$. Therefore
\[
\lim_{N \to \infty} \mathcal{R}^{d,\lambda}_{ \mathit{R}_{1/{N}}(\Strans{[p_1\boxplus_{d,\lambda}... \boxplus_{d,\lambda} p_N]})}(s) = 0.
\]
Using the inversion formula through which one recovers the polynomial from its $R$-transform(see Proposition~\ref{inversionf}), it means, as the polynomial with $0$ $R$-transform is trivially $x^d$, that 
\[
\lim_{N \to \infty}\mathit{R}_{1/{N}}(\Strans{[p_1\boxplus_{d,\lambda}... \boxplus_{d,\lambda} p_N]}) = x^d.
\]

\end{proof}

\begin{lemma}[Laguerre $R$ and $T$-transforms] \label{laguerrecumulant}
For a polynomial $p$ of degree $d$ with nonnegative real roots, and $\sigma^2>0$, the following are equivalent:
\begin{enumerate}
\item $T^{(m,d)}_{\Strans{p}}=\sigma^2$, so the $T$-transform is constant (it has to be positive).
\item $\mathcal{R}^{d,\lambda}_{\Strans{p}} (s) =m\sigma^2 s$, so only the first nontrivial cumulant is nonzero.
\item $ p \approx L_d^{(m-d)}(\frac{x}{\sigma^2})$, that is $p$ is up to scaling a generalized Laguerre polynomial of parameter $m-d$.
\end{enumerate}
\end{lemma}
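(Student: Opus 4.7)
\medskip

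My plan is to prove the three-way equivalence by showing $(1)\Leftrightarrow(2)$ first (an almost tautological consequence of the definition of $\mathcal{R}^{d,\lambda}$), and then establishing $(1)\Leftrightarrow(3)$ by expanding the defining $T$-transform relation and matching coefficients with the standard explicit formula for generalized Laguerre polynomials.

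For $(1)\Leftrightarrow(2)$, assume first $T^{(m,d)}_{\Strans{p}}\equiv\sigma^2$. Then $\mathbb{E}[e^{-T\,smd}]=e^{-\sigma^2 smd}$, so $\log\mathbb{E}[\cdots]=-\sigma^2 smd$, and plugging into Definition~\ref{Rtransformcor} yields $\mathcal{R}^{d,\lambda}_{\Strans{p}}(s)=m\sigma^2 s$ directly. Conversely, if $\mathcal{R}^{d,\lambda}_{\Strans{p}}(s)=m\sigma^2 s$, then running the inversion proposition backwards I obtain $Q_p(s)\equiv e^{-dm\sigma^2 s}\pmod{s^{d+1}}$, hence $Q_p(s/(md))=\sum_{i=0}^d(-\sigma^2)^i s^i/i!$. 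Since by construction $Q_p(s/(md))\equiv\mathbb{E}[e^{-Ts}]\pmod{s^{d+1}}$, comparing coefficients gives $\mathbb{E}[T^i]=\sigma^{2i}$ for $i=0,\dots,d$. Because $T$ is a uniform distribution on a multiset of $d$ (a priori complex) values, Newton's identities applied to these first $d$ power sums force the multiset to be $\{\sigma^2,\dots,\sigma^2\}$, i.e.\ $T\equiv\sigma^2$.

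For $(1)\Rightarrow(3)$, I use the defining identity
\[
y^{m-d}p(xy)=\mathbb{E}\!\left[e^{-T^{(m,d)}_{\Strans{p}}\partial_x\partial_y}\right]\!\{y^m x^d\}.
\]
With $T\equiv\sigma^2$ the right side is deterministic and, expanding the exponential and applying $(\partial_x\partial_y)^i\{y^m x^d\}=\tfrac{d!}{(d-i)!}\tfrac{m!}{(m-i)!}y^{m-i}x^{d-i}$, collapses to
\[
p(z)=\sum_{i=0}^d\frac{(-\sigma^2)^i}{i!}\frac{d!}{(d-i)!}\frac{m!}{(m-i)!}z^{d-i}.
\]
The plan is then to compare this with the standard formula
\[
L_d^{(m-d)}(z/\sigma^2)=\sum_{i=0}^d(-1)^i\binom{m}{d-i}\frac{z^i}{\sigma^{2i}\,i!},
\]
reindex by $j=d-i$, and verify that the coefficient ratio is the constant $(-1)^d d!\sigma^{2d}$, independent of $j$. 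This yields $p\approx L_d^{(m-d)}(x/\sigma^2)$.

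For the converse $(3)\Rightarrow(1)$, the cleanest route is to reverse this computation: if $p$ is the monic polynomial with the same roots as $L_d^{(m-d)}(x/\sigma^2)$, it must equal $(-1)^d d!\sigma^{2d}L_d^{(m-d)}(x/\sigma^2)$, and the above coefficient identification is then invertible and shows $y^{m-d}p(xy)=e^{-\sigma^2\partial_x\partial_y}\{y^m x^d\}$. By the uniqueness established in Claim~3.2 of the $T$-transform associated to a given $p$, this forces $T^{(m,d)}_{\Strans{p}}\equiv\sigma^2$. The only delicate step is the bookkeeping in the ratio of coefficients between $p$ and $L_d^{(m-d)}(x/\sigma^2)$; once that calculation is done, the equivalences follow cleanly.
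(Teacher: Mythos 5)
Your proposal is correct and takes essentially the same route as the paper: deduce $\mathcal{R}^{d,\lambda}_{\Strans{p}}(s)=m\sigma^2 s$ directly from a constant $T$-transform, and identify $e^{-\sigma^2\partial_x\partial_y}\{y^mx^d\}$ with the generalized Laguerre polynomial by expanding the exponential operator (your constant coefficient ratio $(-1)^d\,d!\,\sigma^{2d}$ checks out, and the sign is immaterial since the statement is only up to scaling). The only difference is that where the paper compresses $(2)\Rightarrow(1)$ into ``the bijection between transforms,'' you spell it out via the inversion formula and Newton's identities on the power sums $\mathbb{E}[T^i]=\sigma^{2i}$, which is extra detail rather than a different approach.
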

\begin{proof}
Assume that $T^{(m,d)}_{\Strans{p}}=\sigma^2$. Then
\begin{align*}
 \mathcal{R}^{d,\lambda}_{\Strans{p}}(s) & \equiv \frac{-1}{d}s \frac{d}{ds}\log \big( \mathbb{E} \big[e^{-T^{(m,d)}_{\Strans{p}}smd} \big] \big) \mod [ s^{d+1}]\\
\mathcal{R}^{d,\lambda}_{\Strans{p}}(s)  &= \sigma^2ms.
 \end{align*}
 
 We can use the bijection between transforms to conclude that the two first points are equivalent. We get the equivalence with the third point as
 \begin{align}
 e^{-\sigma^2 \partial{x}\partial_{y}}\{y^mx^d\} &= \sigma^{2d} y^{m-d}\sum_{i=0}^d \big(\frac{-xy}{\sigma^2}\big)^{d-i}\frac{m!d!}{i! (m-i)! (d-i)!} \\
 &= d! \sigma^{2d} y^{m-d}\sum_{i=0}^d \big( \frac{-xy}{\sigma^2}\big)^i\frac{m!}{i! (m-d+i)! (d-i)!}\\
 &= d!\sigma^{2d}y^{m-d}\sum_{i=0}^d  \big( \frac{-xy}{\sigma^2}\big)^i \frac{1}{i!} {m \choose d-i} \\ 
 &=  d!\sigma^{2d}y^{m-d} L_d^{(m-d)}(\frac{xy}{\sigma^2}).
 \end{align}

\end{proof}

\begin{cor}[Divisibility of the Laguerre polynomials]
\[
 L_d^{(m-d)}(\frac{x}{\sigma^2+\tau^2}) \approx  L_d^{(m-d)}(\frac{x}{\sigma^2}) \boxplus_{d,\lambda}   L_d^{(m-d)}(\frac{x}{\tau^2}).
 \]

\end{cor}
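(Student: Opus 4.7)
The plan is to invoke the characterization of Laguerre polynomials given by Lemma~\ref{laguerrecumulant} together with the additivity of the rectangular finite $R$-transform from Theorem~\ref{Rtransform:additivity}. Both $\sigma^2$ and $\tau^2$ are positive and $\sigma^2 + \tau^2$ is also positive, so all three Laguerre polynomials appearing in the statement satisfy the hypothesis of Lemma~\ref{laguerrecumulant}.

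First I would apply the $(3)\Rightarrow(2)$ direction of Lemma~\ref{laguerrecumulant} to each of the two input polynomials to get
\[
\mathcal{R}^{d,\lambda}_{\Strans{L_d^{(m-d)}(x/\sigma^2)}}(s) = m\sigma^2 s, \qquad \mathcal{R}^{d,\lambda}_{\Strans{L_d^{(m-d)}(x/\tau^2)}}(s) = m\tau^2 s.
\]
Then Theorem~\ref{Rtransform:additivity} gives
\[
\mathcal{R}^{d,\lambda}_{\Strans{[L_d^{(m-d)}(x/\sigma^2) \boxplus_{d,\lambda} L_d^{(m-d)}(x/\tau^2)]}}(s) = m\sigma^2 s + m\tau^2 s = m(\sigma^2+\tau^2)s.
\]

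Next I would apply the $(2)\Rightarrow(3)$ direction of Lemma~\ref{laguerrecumulant} with the parameter $\sigma^2$ replaced by $\sigma^2+\tau^2$. Since the rectangular convolution of two polynomials with nonnegative real roots of degree $d$ is again a polynomial with nonnegative real roots of degree at most $d$ (Theorem~\ref{thm:rr}), and since the $R$-transform determines the roots of a monic polynomial of degree $d$ uniquely (by the inversion formula of the preceding section), the fact that $\mathcal{R}^{d,\lambda}_{\Strans{[L_d^{(m-d)}(x/\sigma^2) \boxplus_{d,\lambda} L_d^{(m-d)}(x/\tau^2)]}}(s) = m(\sigma^2+\tau^2)s$ forces
\[
L_d^{(m-d)}(x/\sigma^2) \boxplus_{d,\lambda} L_d^{(m-d)}(x/\tau^2) \approx L_d^{(m-d)}(x/(\sigma^2+\tau^2)),
\]
where the symbol $\approx$ is precisely there to absorb the mismatch of leading coefficients (the convolution is monic in $x$ up to combinatorial factors, whereas the standard Laguerre polynomial has its own normalization).

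There is essentially no obstacle: the work has all been done by the two preceding results. The only subtlety to keep in mind is that Lemma~\ref{laguerrecumulant} asserts equivalence of three conditions \emph{up to scaling}, so the equality holds as $\approx$ rather than as strict polynomial equality; this is exactly why the statement of the corollary uses $\approx$ and no constants need to be tracked.
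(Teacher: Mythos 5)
Your proposal is correct and is exactly the argument the paper intends: the corollary is stated immediately after Lemma~\ref{laguerrecumulant} precisely so that the identification of $m\sigma^2 s$ as the $R$-transform of the scaled Laguerre polynomial, combined with the additivity in Theorem~\ref{Rtransform:additivity} and the inversion formula, yields the statement up to the leading-coefficient ambiguity absorbed by $\approx$. No gaps; your handling of the normalization issue matches the paper's use of $\approx$.
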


The central limit theorem in probability states that for independent random variables that have zero mean and constant variance, then the square-root average converges to a Gaussian random variable that has the same variance. We adapt it in this context: we add polynomials freely, and renormalize the roots by $\sqrt{N}$ . The result is that it converges to a generalized Laguerre polynomial, the equivalent of a Gaussian random variable. 
\begin{prop}[Central limit theorem]  \label{centrallimittheorem} Let $p_1,p_2$,... be a sequence of degree $d$ with real nonnegative roots and same mean $\sigma^2$, that is,
\begin{align*}  & p_i= \prod_j (x-r_{i,j}^2) &\text{and}&&  \frac{1}{d}\sum_j r_{i,j}^2=\sigma^2 &
\end{align*} 
then
\[
\lim_{N \to \infty} \mathit{R_{1/\sqrt{N}}}(\Strans{[p_1\boxplus_{d,\lambda}... \boxplus_{d,\lambda} p_N]}) (x)\approx  L_d^{(m-d)}(\frac{x^2m}{\sigma^2}).
\]
\end{prop}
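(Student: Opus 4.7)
The plan is to imitate the law of large numbers argument (Proposition~\ref{lawlargenumbers}), but this time keep track of the linear term in $s$ of the finite $R$-transform, which the $1/\sqrt{N}$ scaling is precisely calibrated to preserve. First, by Lemma~\ref{operations}, observe that
\[
\mathit{R}_{1/\sqrt{N}}\bigl(\Strans{[p_1\boxplus_{d,\lambda}\cdots\boxplus_{d,\lambda} p_N]}\bigr)
= \Strans{\bigl[(\mathit{R}_{1/N}p_1)\boxplus_{d,\lambda}\cdots\boxplus_{d,\lambda}(\mathit{R}_{1/N}p_N)\bigr]},
\]
so by the additivity (Theorem~\ref{Rtransform:additivity}) and the scaling rule (Lemma~\ref{Rtransformscaling}), writing $\mathcal{R}^{d,\lambda}_{\Strans{p_i}}(s)=\sum_{k\geq 1}a^{(d,i)}_k s^k$,
\[
\mathcal{R}^{d,\lambda}_{\mathit{R}_{1/\sqrt{N}}(\Strans{[p_1\boxplus_{d,\lambda}\cdots\boxplus_{d,\lambda} p_N]})}(s)
= \sum_{k=1}^{d}\Bigl(\sum_{i=1}^{N} a^{(d,i)}_k\Bigr)\, N^{-k}\, s^k.
\]

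Next I would identify the $k=1$ coefficient. A direct computation from Definition~\ref{Rtransformcor}, using that the first moment of $T^{(m,d)}_{\Strans{p_i}}$ is $p_1^{(i)}/(md)=\sigma^2/m$ (since the mean of the roots of $p_i$ is $\sigma^2$), gives $a^{(d,i)}_1 = \sigma^2$ for every $i$. Hence the $k=1$ contribution is exactly $N\cdot \sigma^2\cdot N^{-1}s=\sigma^2 s$, independently of $N$. For $k\geq 2$, Lemma~\ref{Rboundedness} furnishes a bound $|a^{(d,i)}_k|\leq K$ uniform in $i$, so the corresponding coefficient is bounded in absolute value by $NK\cdot N^{-k}=KN^{1-k}\to 0$. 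Therefore
\[
\mathcal{R}^{d,\lambda}_{\mathit{R}_{1/\sqrt{N}}(\Strans{[p_1\boxplus_{d,\lambda}\cdots\boxplus_{d,\lambda} p_N]})}(s)\xrightarrow[N\to\infty]{} \sigma^2 s.
\]

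Finally, I would invoke Lemma~\ref{laguerrecumulant}, which identifies polynomials with $R$-transform $m\tau^2 s$ (up to scaling) as $L_d^{(m-d)}(x/\tau^2)$. Setting $m\tau^2=\sigma^2$, i.e.\ $\tau^2=\sigma^2/m$, the limiting polynomial (before symmetrization) is $L_d^{(m-d)}(xm/\sigma^2)$ up to scaling, and symmetrizing via $\Strans{}$ (which replaces $x$ by $x^2$) yields $L_d^{(m-d)}(x^2m/\sigma^2)$, which is the desired conclusion modulo a leading coefficient. Uniqueness of the polynomial with a prescribed $R$-transform (the inversion formula used in Proposition~\ref{lawlargenumbers}) justifies passing from convergence of $R$-transforms back to convergence of polynomials, so the $\approx$ sign is exactly the right notion here.

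The main obstacle is less a deep calculation than careful bookkeeping: one must keep the scalings $\mathit{R}_{1/\sqrt{N}}$ on the symmetrized polynomial versus $\mathit{R}_{1/N}$ on $p_i$ straight, and verify that the normalization in Definition~\ref{Rtransformcor} makes $a^{(d,i)}_1$ land on $\sigma^2$ rather than on $\sigma^2/m$ or $m\sigma^2$. Once that is pinned down, the remaining cancellation of higher-order terms is a one-line estimate powered by Lemma~\ref{Rboundedness}, exactly mirroring the law-of-large-numbers computation with the critical difference that $N^{-1}$ rather than $N^{-2}$ appears at order $k=1$, so only this term survives.
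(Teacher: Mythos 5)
Your proposal is correct and takes essentially the same route as the paper's proof: split off the scaling via Lemma~\ref{operations} and Lemma~\ref{Rtransformscaling}, use additivity (Theorem~\ref{Rtransform:additivity}), kill the $k\geq 2$ coefficients with the uniform bound of Lemma~\ref{Rboundedness}, and identify the surviving linear term through Lemma~\ref{laguerrecumulant} and the inversion formula. Notably, your value $a^{(d,i)}_1=\sigma^2$ (coming from $\mathbb{E}[T^{(m,d)}_{\Strans{p_i}}]=p_1^{(i)}/(md)=\sigma^2/m$) is the one consistent with the stated limit $L_d^{(m-d)}(x^2m/\sigma^2)$, whereas the paper's proof writes $a^{(d,i)}_1=m\sigma^2$, which via Lemma~\ref{laguerrecumulant} would instead yield $L_d^{(m-d)}(x^2/\sigma^2)$; your bookkeeping therefore fixes a small inconsistency in the paper's own argument while reaching the correct conclusion.
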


\begin{proof}
Let's write again for all $i$, 
 \[
 \mathcal{R}^{d,\lambda}_{ \Strans{p_i}}(s):=   \sum_{k=1}^db_{k,i}  s^k.
 \]
Using successively Lemma~\ref{Rtransformscaling} and Lemma~\ref{operations},
\begin{align*}
 \mathcal{R}^{d,\lambda}_{ \mathit{R}_{1/{\sqrt{N}}}(\Strans{[p_1\boxplus_{d,\lambda}... \boxplus_{d,\lambda} p_N]})}(s) &=   \mathcal{R}^{d,\lambda}_{ \Strans{ \mathit{R}_{1/{N}}[p_1\boxplus_{d,\lambda}... \boxplus_{d,\lambda} p_N]}}(s) \\
 &=  \mathcal{R}^{d,\lambda}_{ \Strans{\big[ (\mathit{R}_{1/{N}}p_1)\boxplus_{d,\lambda}... \boxplus_{d,\lambda} (\mathit{R}_{1/{N}}p_N)} \big]}(s) \\
 &= \sum_{i=1}^N \mathcal{R}^{d,\lambda}_{ \Strans{ \mathit{R}_{1/{N}}p_i}}(s)\\
 &=  \sum_{i=1}^N  \sum_{k=1}^db_{k,i}  \frac{1}{N^{k}}s^k.
 \end{align*}
Due to Lemma~\ref{Rboundedness}, the uniform boundedness of the $\big(b_{k,i}\big)_{k,i}$ holds, and therefore there exists $K>0$ such that  $|b_{k,i}| \leq K$ for all $i \leq N$, $k\leq d$. 
We have for $k\geq 2$,
 \[
 \frac{  |\sum_{i=1}^N b_{k,i} |}{ N^{k}} \leq \frac{K}{N}.
\]
Therefore
\[
\lim_{N \to \infty} \sum_{i=1}^N  \sum_{k=2}^db_{k,i}  \frac{1}{N^{k}}s^k = 0.
\]
Notice that $a^{(d,i)}_1= m\sigma^2$ and as a consequence, 
\[
  \sum_{i=1}^N a^{(d,i)}_1  \frac{1}{N}s =  m\sigma^2 s.
\]
We then get
\[
\lim_{N \to \infty} \mathcal{R}^{d,\lambda}_{ \mathit{R}_{1/{N}}(\Strans{[p_1\boxplus_{d,\lambda}... \boxplus_{d,\lambda} p_N]})}(s) = m\sigma^2 s.
\]
Using the inversion formula through which one recovers the polynomial from its $R$-transform(see Proposition~\ref{inversionf}), it means
\[
\lim_{N \to \infty}\mathit{R}_{1/{N}}(\Strans{[p_1\boxplus_{d,\lambda}... \boxplus_{d,\lambda} p_N]})(s) \approx  L_d^{(m-d)}(\frac{x^2m}{\sigma^2}).
\]

\end{proof}

\section*{Acknowledgements}
\textit{I would like to thank my advisor Adam Marcus for suggesting this problem to me and taking time to discuss the details with me.}

\section*{Declarations}
This study was funded by Princeton University and EPFL (jointly). The author has no conflicts of interest to declare that are relevant to the content of this article.
Data sharing not applicable to this article as no datasets were generated or analysed during the current study.

\end{document}